\DeclareSymbolFontAlphabet{\mathbb}{AMSb}
\DeclareSymbolFontAlphabet{\mathbbl}{bbold}
\def\N{\mathbb N}
\def\R{\mathbb R}
\def\Po{\mathbb P}
\def\U{\mathbf{U}}
\def\D{\mathcal{D}}
\def\T{{\mathbf{T}}}
\def\<{\langle}
\def\>{\rangle}
\def\lsim{\lesssim}
\def\wt{\widetilde}
\def\Chi{\raise .3ex \hbox{\large $\chi$}} 
\def\ov{\overline}
\def\normal{\mathbf{n}}
\def\n{\normal}
\def\tang{{\boldsymbol{\tau}}}
\def\trescacof{\text{g}}
\def\m{{\boldsymbol{\mu}}}
\def\la{{\boldsymbol{\lambda}}}
\newcommand{\dist}{\mathop{dist}}
\newcommand{\ProjFG}{\pi^0_{\faces_\Gamma}}
\newcommand{\ProjM}{\pi^0_{\cells}}
\newcommand{\HGbracket}[2]{\langle #1,#2\rangle_{\Gamma}}
\newcommand{\IPbracket}[2]{\langle #1,#2\rangle}
\def\[{\Bigl [}
\def\]{\Bigr ]}
\def\({\Bigl (}
\def\){\Bigr )}
\def\dsp{\displaystyle}
\def\x{{\bf x}}
\def\y{{\bf y}}
\def\U{{\bf U}}
\def\q{{\bf q}}
\def\cells{{\mathcal{M}}}
\def\faces{{\mathcal{F}}}
\def\nodes{{\mathcal{V}}}
\def\edges{{\mathcal{E}}}
\def\x{{\bf x}}
\def\dsp{{\displaystyle x}}
\def\d{{\rm d}}
\def\div{{\rm div}}
\def\dsp{\displaystyle}
\def\bu{\mathbf{u}}
\def\bq{\mathbf{q}}
\def\bv{\mathbf{v}}
\def\bw{\mathbf{w}}
\newcommand{\bs}[1]{\boldsymbol{#1}}
\newcommand{\UD}{\mathbf{U}_\D}
\newcommand{\UDz}{\mathbf{U}_{0,\D}}
\newcommand{\MD}{\mathbf{M}_\D}
\newcommand{\CD}{\mathbf{C}_\D}
\newcommand{\Poly}[1]{\mathbb{P}^{#1}}
\newcommand{\JS}{H^{\nicefrac 12}_{0,{\rm j}}(\Gamma)}    % "Jump space"
\newcommand{\JSdual}{H^{-\nicefrac 12}_{0,{\rm j}}(\Gamma)}
\newcommand{\Hhalf}{H^{\nicefrac 12}_0(\Gamma)}    % H^{1/2}
\newcommand{\Hhalfdual}{H^{-\nicefrac 12}_0(\Gamma)}    
\newcommand{\IUD}{\mathcal I_{\UDz}} % vertices and faces
\newcommand{\IMD}{\mathcal I_{\MD}} % vertices and faces
\newcommand{\ave}{\mathrm{a}}
\newcommand{\NORM}[2]{\|#2\|_{#1}}
\newcommand{\SEMINORM}[2]{|#2|_{#1}}
\def\O{\Omega}
\newcommand{\Ks}{{\mathcal{K}s}}
\def\Ksig{{K\!\sigma}}
\def\Ls{{\mathcal{L}s}}
\def\Lsig{{L\!\sigma}}
\newcommand{\sige}{{\sigma e}}
\newcommand{\Ksi}{{\Ks(i)}}
\newcommand{\Lsi}{{\mathcal{L}s(i)}}
\def\bbsig{\bbsigma}
\def\bbeps{\bbespilon}
\newcommand{\jump}[1]{\llbracket #1 \rrbracket}
\newcommand{\email}[1]{\href{mailto:#1}{#1}}
\theoremstyle:=definition,remark,plain\do{%
        \expandafter\g@addto@macro\csname th@\theoremstyle\endcsname{%
            \addtolength\thm@preskip\parskip
            }%
        }
\newtheorem{theorem}{Theorem}
\numberwithin{theorem}{section}
\newtheorem{proposition}[theorem]{Proposition}
\newtheorem{lemma}[theorem]{Lemma}
\theoremstyle{remark}
\newtheorem{remark}[theorem]{Remark}
\theoremstyle{definition}
\newtheorem{definition}[theorem]{Definition}
\def\trace{\gamma}
\def\grad{\nabla}
\def\div{{\rm div}}
\def\dist{\mbox{\rm dist}}
\definecolor{labelkey}{rgb}{0.6,0,1}
\def\thm@space@setup{%
  \thm@preskip=\parskip \thm@postskip=0pt
}
\begin{document}
\title{Analysis of a VEM--fully discrete polytopal scheme with bubble stabilisation for contact mechanics with Tresca friction}

\author[2]{{J\'er\^ome Droniou}\footnote{\email{jerome.droniou@umontpellier.fr}}}
\author[1,3]{{Ali Haidar}\footnote{Corresponding author, \email{ali.haidar@univ-cotedazur.fr}}}
\author[1]{{Roland Masson}\footnote{Corresponding author, \email{roland.masson@univ-cotedazur.fr}}}
\affil[1]{Universit\'e C\^ote d'Azur, Inria, Laboratoire J.A. Dieudonn\'e, Nice, France}%
\affil[2]{IMAG, Univ. Montpellier, CNRS, Montpellier, France. School of Mathematics, Monash University, Australia}%
\affil[3]{IFP Energies nouvelles, department of mathematics, 92500 Rueil-Malmaison, France}%
\date{}
\maketitle
\vspace{-.5cm}
{\footnotesize{
\centering
%\textbf{Highlights}
%\vspace{-.2cm}
%\begin{itemize}
%\item ....
%\end{itemize}
}}

\begin{abstract}
This work performs the convergence analysis of the polytopal nodal discretisation of contact-mechanics (with Tresca friction) recently introduced in \cite{droniou2023bubble} in the framework of poro-elastic models in fractured porous media. The scheme is based on a mixed formulation, using face-wise constant approximations of the Lagrange multipliers along the fracture network and a fully discrete first order nodal approximation of the displacement field. The displacement field is enriched with additional bubble degrees of freedom along the fractures to ensure the inf–sup stability with the Lagrange multiplier space. It is presented in a fully discrete formulation, which makes its study more straightforward, but also has a Virtual Element interpretation.

The analysis establishes an abstract error estimate accounting for the fully discrete framework and the non-conformity of the discretisation. A first order error estimate is deduced for sufficiently smooth solutions both for the gradient of the displacement field and the Lagrange multiplier.  A key difficulty of the numerical analysis is the proof of a discrete inf-sup condition, which is based on a non-standard $H^{-\nicefrac12}$-norm (to deal with fracture networks) and involves the jump of the displacements, not their traces. The analysis  also requires the proof of a discrete Korn inequality for the discrete displacement field which takes into account fracture networks.
Numerical experiments based on analytical solutions confirm our theoretical findings. 
  
\bigskip
\textbf{Keywords:} Contact-mechanics, fracture networks, polytopal method, fully discrete approach, virtual element method, bubble stabilisation, error estimates, discrete inf-sup condition, discrete Korn inequality. 
\end{abstract}

%
%%-----------------------------
%%      your text
%%-----------------------------

\section{Introduction}

The simulation of poromechanical models in fractured (or faulted) porous rocks plays an important role in many subsurface applications  such as the assessment of fault reactivation risks in CO2 storage or the hydraulic fracture stimulation in deep geothermal systems. These models couple the flow along the fractures and the surrounding matrix to the rock mechanical deformation and the mechanical behavior of the fractures. Fractures are classically represented as a network of planar surfaces connected to the surrounding matrix domain, leading to the so-called mixed-dimensional models which have been the object of many recent works in poromechanics \cite{NEJATI2016123,tchelepi-castelletto-2020,GKT16,GH19,contact-norvegiens,thm-bergen,GDM-poromeca-cont,GDM-poromeca-disc,BDMP:21,BoonNordbotten22}.

Polytopal discretisations are motivated in subsurface applications to cope with the complexity of the geometries representing geological structures including faults/fractures, layering, erosions and heterogeneities.  
Different classes of polytopal methods have been developed in the field of mechanics such as  Discontinuous Galerkin \cite{hansbo-larson}, Hybrid High Order (HHO) \cite{hho-book}, MultiPoint Stress Approximation (MPSA) \cite{keilegavlen-nordbotten}, Hybrid Mimetic Methods \cite{dipietro-lemaire} and Virtual Element Methods (VEM) \cite{beirao-brezzi-marini,da2015virtual}. Some of them have been extended to account for contact-mechanics as in \cite{contact-norvegiens} for the MSPA based on facewise constant approximations of the surface tractions and displacement jump along the fracture network, in \cite{CEP20} for HHO combined with a Nitsche's contact formulation, and in \cite{WRR2016} for VEM based on node to node contact conditions. Among these polytopal methods, VEM, as a natural extension of the Finite Element Method (FEM) to polyhedral meshes, has received a lot of attention in the mechanics community since its introduction in \cite{beirao-brezzi-marini}  and has been applied to various problems including in the context of geomechanics \cite{AHR17}, poromechanics \cite{coulet2020fully,BORIO2021,CRMECA_2023__351_S1_A28_0} and fracture mechanics \cite{Wriggers2024}. 

In \cite{droniou2023bubble}, an  extension of the first order VEM to contact-mechanics is introduced based on a fully discrete framework with vector space of discrete unknowns and reconstruction operators in the spirit of Hybrid High Order discretisations \cite{hho-book}.
Following \cite{Renard03,Hild17,tchelepi-castelletto-2020,BDMP:21} in the FEM case, the contact problem is expressed in mixed form with face-wise constant Lagrange multipliers imposing  the contact conditions in average on each face of the fracture network. 
This approach enables the handling of fracture networks (including corners, tips and intersections), the use of efficient semi-smooth Newton nonlinear solvers, and the preservation at the discrete level of the dissipative properties of the contact terms.
On the other hand, the combination of a first order nodal discretisation of the displacement field with a face-wise constant approximation of the Lagrange multiplier requires a stabilisation to ensure the inf-sup compatibility condition. 
This is achieved in \cite{droniou2023bubble} by extending to the polytopal framework the $\Po^1$-bubble FEM discretisation \cite{Renard03} based on the enrichment of the displacement space by an additional bubble unknown on one side of each fracture face. Numerical evidence is obtained that this enrichment achieves the sought stabilisation, but no proof or analysis thereof is provided in this reference.

The goal of the present work is to precisely perform the numerical analysis of the polytopal discretisation proposed in \cite{droniou2023bubble}. To simplify the presentation, we focus on a static isotropic linear elastic mechanical model with Tresca frictional contact at matrix-fracture interfaces. The key new difficulty is related to the proof of the inf-sup condition between the discrete displacement and Lagrange multiplier spaces, which must account for the polytopal nature of the scheme  and the geometrical complexity of the fracture network including tips, corners and intersections. Previous results \cite{Renard03,Hild17} established for FEM cannot be used since they are adapted neither to the fully discrete framework, nor to fracture networks which necessitates the introduction of a specific $\JSdual$-norm.  
The stability and convergence analysis also requires the proof of a discrete Korn inequality for the discrete displacement field accounting for the bubble stabilisation and the fracture network. The error estimate relies on an abstract estimate taking into account our fully discrete framework and the non-conformity of the displacement discretisation. 

The remaining of this paper is organised as follows. Section \ref{sec:model} introduces the static contact-mechanical model with Tresca friction and its mixed formulation. Section \ref{sec:scheme} recalls the main ingredients of the discretisation from  \cite{droniou2023bubble} with the mesh described in Section \ref{subsec:mesh}, the discrete spaces of displacement and Lagrange multiplier unknowns in Section \ref{subsec:spaces}, the function, jump and gradient reconstruction operators in Section \ref{subsec:operators}, the definition of the interpolation operators in Section \ref{subsec:interpolators},  and the discrete mixed formulation in Section \ref{subsec:mixed}. Section \ref{sec:results} states the main results regarding the well-posedness and convergence analysis of the scheme, while the proofs are reported in Section \ref{sec:proofs}, with additional details in the Appendix. Section \ref{sec:numerics} investigates the numerical behavior of the scheme on analytical solutions in order to assess our theoretical results.

\section{Model}\label{sec:model}
We consider a Discrete Fracture Matrix (DFM) model on the polyhedral domain $\Omega\subset\R^d$ including a fracture network $\Gamma$ with co-dimension 1 defined by 
$$
\overline \Gamma = \bigcup_{i\in I} \overline \Gamma_i. 
$$
We assume that $\Omega\backslash\Gamma$ is connected.
Each fracture $\Gamma_i\subset \Omega$ ($i\in I$) is a polygonal simply connected open subdomain of a plane of $\R^d$. Without restriction of generality, it is assumed that fractures may only intersect at their boundaries. 
The two sides of a given fracture of $\Gamma$ are denoted by $\pm$ in the matrix domain $\Omega \backslash\ov\Gamma$. The two unit normal vectors $\normal^\pm$, oriented outward from the sides $\pm$, satisfy $\normal^+ + \normal^- = \mathbf{0}$. 
Given, for simplicity, homogeneous Dirichlet boundary conditions, the space for the displacement
\begin{equation*}
\U_0 = H^1_0(\O\backslash\overline\Gamma)^d
\end{equation*}
is endowed with the norm $\NORM{\U_0}{\bv} = \NORM{L^2(\Omega\backslash \ov\Gamma)}{\nabla \bv}$ (which is, indeed, a norm since $\Omega\backslash\ov\Gamma$ is connected). 
The oriented jump operator on $\Gamma$ for functions $\bu \in \U_0$ is defined by 
$$
\jump{\bu} = {\gamma^+ \bu - \gamma^- \bu}, 
$$
where $\gamma^\pm$ are the trace operators on both sides of $\Gamma$. 
Its normal and tangential components are denoted respectively by  $\jump{\bu}_{\normal}  = \jump{\bu}\cdot\normal^+$ and $\jump{\bu}_\tang = \jump{\bu} - \jump{\bu}_{\normal} \normal^+$. Note that $\jump{\bu}_\tang$ depends on the orientation, while $\jump{\bu}_{\normal}$ does not.  
The sided normal trace operator on $\Gamma$ oriented  outward to the side $\pm$, applied to $H_\div(\Omega\backslash\ov\Gamma)$ functions, is denoted by $\gamma^{\pm}_{\normal}$.  
The symmetric gradient operator $\bbeps$ is defined on $\U_0$ by $\bbeps(\bv) = {1\over 2} (\nabla \bv + \prescript{t}{}{\nabla \bv})$. 

The model we consider accounts for the mechanical equilibrium equation with a linear elastic constitutive law and a Tresca frictional contact model at matrix--fracture interfaces. In its strong form, it is written
	\begin{equation}
		\label{mode_tresca} 
		\left\{\!\!\!\!
		\begin{array}{lll}
			& -\div \bbsigma(\bu)= \mathbf{f}  & \mbox{ on } \Omega{\backslash}\ov\Gamma,\\[1ex]
			&   \bbsigma(\bu)=2 \mu \bbeps(\bu)+\lambda \div \bu & \mbox{ on }  \Omega{\backslash}\ov\Gamma,\\[1ex]
			& \gamma_\normal^+\bbsigma(\bu)+\gamma_\normal^-\bbsigma(\bu) = \mathbf{0} & \mbox{ on }  \Gamma,\\[1ex]
			& T_{\normal}(\bu) \leqslant 0,~ \jump{\bu }_{\normal} \leqslant 0,~ \jump{\bu }_{\normal}T_{\normal}(\bu) =0 & \mbox{ on }  \Gamma,\\[1ex]
			& \left|\T_{\tang}(\bu)\right| \leqslant \trescacof& \mbox{ on }  \Gamma,\\[1ex]
			& \T_{\tang}(\bu)  \cdot  \jump{\bu }_{\tang} +\trescacof \left| \jump{\bu }_{\tang}\right|=0 & \mbox{ on }  \Gamma, 
		\end{array}
		\right.
	\end{equation}
 with Tresca threshold $\trescacof \geq 0$, Lam\'e coefficients $\mu$ and $\lambda$, and normal and tangential surface tractions $T_{\normal}(\bu) = \gamma_\normal^+\bbsigma(\bu)\cdot \normal^+$ and $\T_{\tang}(\bu) = \gamma_\normal^+\bbsigma(\bu) - T_\normal(\bu) \normal^+$.  It is assumed in the following that the external force term $\mathbf{f}$ belongs to $L^2(\Omega)^d$. 

The weak formulation of the mechanical model  with Tresca frictional-contact \eqref{mode_tresca} is written in mixed form using a vector-valued Lagrange multiplier $\la: \Gamma \to \R^d$ at matrix--fracture interfaces. Define the displacement jump space by
$$
\JS=\left\{\jump{\bv}\,:\,\bv\in\U_0\right\}
$$
and denote by $\JSdual$ its dual space; the duality pairing between these two spaces is written  $\HGbracket{\cdot}{\cdot}$. We will also use this notation for the duality pairing between $\Hhalf$, trace space of $H^1(\Omega\backslash\Gamma)$, and its dual space $\Hhalfdual$. We note that $L^2(\Gamma)^d\subset \JSdual$ and that $\langle \bm{\mu},\bv\rangle_\Gamma=\int_\Gamma \bm{\mu}\cdot\bv$ whenever $\bm{\mu}\in L^2(\Omega)^d$. The dual cone is then defined by
\begin{align*}
	\bm{C}_f = \Big\{\m{} \in \JSdual \,:\,
	\HGbracket{\m}{\bv} \leq \HGbracket{\trescacof}{|\bv_{\tang}|} \mbox{ for all } \bv \in \JS \mbox{ with } \bv\cdot \normal^+ \leq 0\Big\}. 
\end{align*}

The weak mixed-variational formulation of \eqref{mode_tresca} reads: find $\bu \in \U_0$ and $\la \in \bm{C}_f$ such that, for all $\bv \in \U_0$ and $\m \in \bm{C}_f$, 
\begin{subequations}
	\label{Lagrange_meca_contactfriction}
	\begin{align}
		& \dsp \int_\Omega \bbsig( \bu): \bbeps( \bv)  
		+  \HGbracket{\la}{\jump{ \bv}}  \label{Lagrange_meca_contactfriction_1}
		\dsp =  \int_\Omega \mathbf{f}\cdot \bv,\\[.8em]
		& \dsp  \HGbracket{\m - \la}{ \jump{ \bu}}\leq 0.  \label{Lagrange_meca_contactfriction_2} 
	\end{align}
\end{subequations}
Note that, based on the variational formulation, the Lagrange multiplier satisfies
$$
\la = -\gamma_\normal^+\bbsigma(\bu) = \gamma_\normal^-\bbsigma(\bu).
$$

\section{Scheme}\label{sec:scheme}

\subsection{Mesh}\label{subsec:mesh}

We take a polyhedral mesh of the domain $\Omega$ that is conforming with the fracture network $\Gamma$. We also assume that the Tresca threshold $\trescacof\ge 0$ is piecewise constant on the trace of the mesh on $\Gamma$. For each cell $K$ (resp.~face $\sigma$),  we denote by $h_{K}$ (resp.~$h_{\sigma}$) and  $|K|$ (resp.~$|\sigma|$) its diameter and its measure, and we set
$$
h_\D = \max_{K\in \cells} h_K. 
$$
The set of cells $K$, the set of faces $\sigma$,  the set of nodes $s$ and the set of edges $e$ are denoted respectively by  $\cells$, $\faces$, $\nodes$ and $\edges$.  By conformity of the mesh with respect to $\Gamma$, there exists a subset of faces $\faces_{\Gamma} \subset \faces$ such that  
$$  
	\overline{\Gamma} = \bigcup_{\sigma \in \faces_{\Gamma}} \overline{\sigma}. 
$$
We denote by $\cells_{\sigma}$ the set of cells neighboring a face $\sigma \in \faces$; thus, $\cells_{\sigma} = \{ K,L\}$ for interior face $\sigma \in \faces^{\text{int}}$ (in which case we write $\sigma = K|L$) and $\cells_{\sigma} = \{ K\}$ for boundary face $\sigma \in \faces^{\text{ext}}$. Since $\Gamma\subset\Omega$, we have $\faces_{\Gamma} \subset  \faces^{\text{int}}$. For a face $\sigma \in \faces_\Gamma$, $K$ and $L$ in the notation $\sigma=K|L$ are labelled such that $\normal_{\Ksig} = \normal^+$ and $\normal_{\Lsig} = \normal^-$, where $\normal_{\Ksig} $ (resp.~$\normal_{\Lsig}$ ) is the unit  normal vector to $\sigma$ oriented outward of $K$ (resp.~$L$). We denote by $\nodes^{\text{ext}}$ the boundary nodes, and by $\nodes_{\sigma}$ the set of nodes of $\sigma$, $\edges_{\sigma}$ the set of edges of $\sigma$, $\faces_{K}$ the set of faces of $K$, $\nodes_K$ the set of nodes of $K$, and $\cells_s$ the set of cells that contain the vertex $s$. For each $\sigma \in \faces$,  $\normal_\sige$ is the unit normal vector to $e \in \edges_{\sigma}$ in the plane $\sigma$ oriented outward to $\sigma$. For each $K \in \cells$ and $\sigma \in \faces_K$ we denote by $\gamma^{\Ksig}$ the trace operator on $\sigma$ for functions in $H^1(K)$ (or their vector-valued versions). 

Throughout this paper we suppose that mesh regularity assumptions of \cite[Definition 1.9]{hho-book} hold, and we write $a\lsim b$ (resp.~$a\gtrsim b$) as a shorthand for $a\le Cb$ (resp.~$Ca\le b$) with $C>0$ depending only on $\Omega$, $\Gamma$, on the mesh regularity parameter, and possibly on the Lam\'e coefficients and $\textbf{f}$.

If $X\in\cells\cup\faces$ and $\ell\in\N$, we denote by $\Poly{\ell}(X)$ the space of polynomials of degree $\le \ell$ on $X$. For $\mathcal X=\cells$ or $\mathcal X=\faces_\Gamma$, we use the notation $\Poly{\ell}(\mathcal X)$ for the space of piecewise-polynomials of degree $\le \ell$ on $\mathcal X$.

\subsection{Spaces}\label{subsec:spaces}

The degrees of freedom (DOFs) for the displacement are nodal (attached to the vertices of the mesh). To account for the discontinuity of the discrete displacement field at matrix fracture interfaces, these nodal DOFs can be discontinuous across the fracture network -- each vertex on the fracture network (that is not an interior tip) has two or more values attached to it, one for each local connected component of the matrix in a neighborhood of the vertex. To use specific notations, for each $s \in \nodes_\Gamma$ and each $K\in\cells_s$, we denote by $\bv_{\Ks}$ the nodal unknown corresponding to the side of $K$ in the matrix. This unknown is identical for all cell lying on the same side as $K$: if $L\in\cells_s$ is on the same side as $K$, then $\bv_\Ks=\bv_\Ls$. The notation $\Ks$ is used for the set of all cells containing $s$ and lying on the same side of $\Gamma$ as $K$. If $s\not\in\nodes_\Gamma$, there is a unique displacement unknown at $s$, which is denoted by $\bv_\Ks$; in that case, $\Ks=\cells_s$. 
Figure \ref{dof_3} visually illustrates this idea. 

Additionally, on the positive side of the fracture, we attach to each face a vector DOF corresponding to a ``correction'' of the face values generated by the nodal DOFs; this correction plays the role, at the discrete level represented by the space of DOFs, of a bubble function and is introduced to ensure a suitable inf--sup condition between the space of Lagrange multipliers and the jump of the displacements.

\begin{figure}[H]
	\begin{center}
		\includegraphics[width=9.4cm]{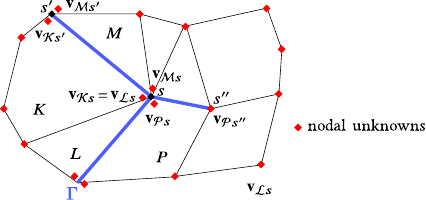}
		\caption{Nodal degrees of freedom.}\label{dof_3}
	\end{center}
\end{figure}

For each $K\in\cells$, we denote by
\begin{equation*}
	\faces_{\Gamma,K}^+ = 
	\Big\{ \sigma \in \faces_{\Gamma} \cap \faces_{K}\,:\, \normal_{\Ksig} \cdot \normal^+  > 0  \Big\}
\end{equation*}
the set of faces of $K$ that lie on the positive side of $\Gamma$ (that set is empty if $K$ does not touch $\Gamma$ or only touches it from its negative side). According to the discussion above, the discrete space of displacement is
\begin{equation}\label{UD_dof}
  \begin{aligned}
  \UDz = \Big\{\bv_\D={}&((\bv_{\Ks})_{K\in\cells,\,s\in\nodes_K},(\bv_{\Ksig})_{K\in\cells,\,\sigma\in\mathcal F_{\Gamma,K}^+})\,:\\
  &\bv_\Ks\in\R^d\,,\;\bv_\Ksig\in\R^d\,,\;\bv_\Ks=0\mbox{ if $s\in\nodes^{\text{ext}}$}\\
  &\bv_\Ks=\bv_\Ls\mbox{ if $K,L \in\cells_s$ are on the same side of $\Gamma$}\Big\}.
  \end{aligned}
\end{equation}
The Lagrange multiplier plays the role of approximations of $-\gamma_\normal^+\bbsigma(\bu)$ on $\Gamma$. Its space is made of piecewise constant vectors:
$$ 
  \MD = \big\{  \la_{\D} \in L^2\left(  \Gamma \right)^d\,:\, \la_\sigma:=(\la_{\D})_{|\sigma}\mbox{ is constant for all $\sigma\in\faces_\Gamma$} \big\}.
$$
  For $\la_{\D} \in \MD$, we define its normal and tangential components by
  $$
  \lambda_{\D,\normal} = \la_{\D} \cdot \normal^+, \quad \la_{\D,\tang} = \la_\D - \lambda_{\D,\normal}  \normal^+, 
  $$
   and the discrete dual cone by
$$
\CD = \big\{ \la_{\D} \in \MD \,:\, \lambda_{\D,\normal} \ge 0,\, |\la_{\D,\tang}| \leq \trescacof \big\} \subset \bm{C}_f.
$$
(the inclusion being easy to check).

\subsection{Reconstruction operators in $\UDz$}\label{subsec:operators}

We first define, for each $K\in\cells$ and $\sigma\in\faces_K$, a tangential face gradient $\nabla^\Ksig:\UDz\to \Poly{0}(\sigma)^{d\times d}$ and tangential displacement reconstruction $\Pi^{\Ksig}:\UDz\to\Poly{1}(\sigma)^d$. First, we choose nonnegative weights $(\omega_s^\sigma)_{s\in\nodes_\sigma}$ to express the center of mass $\overline{\mathbf{x}}_\sigma$ of $\sigma$ in terms of its vertices:
\begin{equation}\label{eq:choice.weights.sigma}
\overline{\mathbf{x}}_{\sigma} = \sum_{s \in \nodes_{\sigma} } \omega_{s}^{\sigma} \x_s\,,\quad
\sum_{s \in \nodes_{\sigma} } \omega_{s}^{\sigma}=1.
\end{equation}
Then, for $\bv_\D\in\UDz$, we set
\begin{equation}\label{def:def.operators.sigma}
\begin{aligned}
  \nabla^{\Ksig} \bv_{\D} ={}& \frac{1}{|\sigma|} \sum_{e= s_1 s_2 \in \edges_{\sigma}} |e| {\mathbf{v}_{\Ks_1} + \mathbf{v}_{\Ks_2} \over 2} \otimes \normal_\sige,\\
  \Pi^{\Ksig} \bv_{\D}(\mathbf{x})  ={}& \nabla^{\Ksig} \bv_{\D} (\mathbf{x} - \overline{\mathbf{x}}_{\sigma}) + \overline{\bv}_{\Ksig}\quad\forall\mathbf{x}\in\sigma,\quad\mbox{ where } \overline{\bv}_{\Ksig}=\sum_{s \in \nodes_{\sigma} } \omega_{s}^{\sigma}\bv_{\Ks}.
\end{aligned}
\end{equation}
Above, we have noted $e=s_1s_2$ to indicate that the edge $e$ has vertices $s_1$, $s_2$.

If $\sigma \in \faces_{\Gamma}$ is a fracture face, and $K$ (resp.~$L$) is the cell on the positive (resp.~negative) side of $\sigma$, we define the displacement jump operator on $\sigma$ as $\jump{{\cdot}}_\sigma:\UDz\to\Poly{0}(\sigma)^d$ such that, for all $\bv_\D\in\UDz$,
\begin{equation}\label{eq:def.jump.sigma}
  \jump{\bv_{\D}}_{\sigma}  =  \frac{1}{|\sigma|} \int_{\sigma}(\Pi^{\Ksig}  \bv_{\D} - \Pi^{\Lsig} \bv_{\D}) +  \bv_{\Ksig}
  = \overline{\bv}_\Ksig-\overline{\bv}_\Lsig +  \bv_{\Ksig}.
\end{equation}
The normal component of that jump is denoted by $\jump{{\cdot}}_{\sigma,\normal} =  \jump{{\cdot}}_{\sigma} \cdot \mathbf{n}_{\Ksig}$.

For each cell $K\in\cells$, we select nonnegative weights $(\omega_s^K)_{s\in\nodes_K}$ of a linear decomposition of the center of mass $\overline{\mathbf{x}}_K$ of $K$ in terms of its vertices
$$
  \overline{\mathbf{x}}_K=\sum_{s\in\nodes_K}\omega_s^K\mathbf{x}_s\,,\quad 
  \sum_{s \in \nodes_K } \omega_{s}^K=1,
$$
and we design a gradient reconstruction $\nabla^K:\UDz\to\Poly{0}(K)^{d\times d}$ and a displacement reconstruction $\Pi^K:\UDz\to\Poly{1}(K)^d$ by setting, for $\bv_\D\in\UDz$,
\begin{align}
  \label{eq:def.nablaK}
  \nabla^{K} \bv_{\D}  ={}& \frac{1}{|K|} \sum_{\sigma \in \faces_{K}}  |\sigma| \overline{\bv}_{\Ksig} \otimes \normal_\Ksig + \sum_{\sigma \in \faces_{\Gamma,K}^+}  \frac{|\sigma|}{|K|} \bv_{\Ksig} \otimes \normal_\Ksig,\\
  \label{eq:def.PiK}
  \Pi^{K} \bv_{\D}(\mathbf{x})  ={}& \nabla^{K} \bv_\D (\mathbf{x} - \overline{\mathbf{x}}_{K}) + \overline{\bv}_{K}\quad\forall\mathbf{x}\in K\,,\quad
  \mbox{ where }\overline{\bv}_{K} = \sum_{s \in \nodes_K } \omega_{s}^K \bv_{\Ks}.
\end{align}

These local jump, gradient and displacement reconstructions are patched together to create their global piecewise polynomial counterparts $\jump{{\cdot}}_\D:\UDz\to \Poly{0}(\faces_\Gamma)^d$, $\nabla^\D:\UDz\to\Poly{0}(\cells)^{d\times d}$ and $\Pi^\D:\UDz\to\Poly{1}(\cells)^d$: for all $\bv_\D\in\UDz$,
\begin{align*}
(\jump{\bv_\D}_{\D})_{|\sigma}={}&\jump{\bv_\D}_\sigma\qquad\forall\sigma\in\faces_\Gamma,\\
(\nabla^\D\bv_\D)_{|K}={}&\nabla^K\bv_\D\qquad\forall K\in\cells,\\
(\Pi^\D\bv_\D)_{|K}={}&\Pi^K\bv_\D\qquad\forall K\in\cells.
\end{align*}
We also define the cellwise constant reconstruction operator 
$\wt{\Pi}^{\D}\bv_{\D}:\UDz\to\Poly{0}(\cells)^d$ such that  $({\wt{\Pi}^{\D}\bv_{\D}})_{|K} = \overline{\bv}_K.$
Finally, the discrete symmetric gradient $\bbeps_\D$, divergence $\div_\D$ and stress tensor $\bbsigma_\D$ are deduced from the previous operators:
$$
\bbeps_{\D} =\frac{1}{2}(\nabla^{\D}+ \prescript{T}{}{\nabla^{\D}}),\quad \div_{\D} = \text{Tr}\left( \bbeps_{\D}\right) \quad \text{and}\quad \bbsigma_{\D}(\cdot) = 2\mu\bbeps_{\D}(\cdot) + \lambda \div_{\D}(\cdot) \mathbb{I}.
$$

\begin{remark}[Non planar faces]\label{non-planar}
	At this point, we have only considered meshes with planar faces. In numerical simulations in geosciences, meshes with non-planar faces are however naturally encountered, for example in Corner Pointed Geometries (CPG) situations. One approach to handle such meshes is to cut the non-planar faces into two or more planar subfaces; this leads to polytopal meshes that can be handled by our method. Another approach was introduced in \cite{CHI2017148}. It consists in adding a barycentric center $\mathbf{x}_{\sigma} =  \frac{1}{\sharp \nodes_{\sigma}} \sum_{s \in \nodes_{\sigma}} \mathbf{x}_{s} $ to the non-planar face $\sigma$, which is artificial in the sense that it is not counted as a geometric node of the mesh. The discrete displacement $\bu_\sigma$ at $\mathbf{x}_{\sigma}$ is then defined by barycentric linear combination of the nodal displacements at the vertices of the face. Additionally, a triangulation of the face $\sigma$, centered at $\mathbf{x}_{\sigma}$, is defined. In this context, the gradient reconstruction operator \eqref{eq:def.nablaK} in each cell must be adapted as follows 	
	\begin{equation*}
		\nabla^{K} \bv_{\D} = \frac{1}{|K|} \sum_{\sigma \in \faces_{K}} \sum_{e=s_1 s_2 \in \edges_{\sigma}} \frac{|T_e|}{3} (\bv_{\Ks_1} + \bv_{\Ks_2} + \bv_{\sigma}) \otimes \normal_{KT_e} + \sum_{\sigma \in \faces_{\Gamma,K}^+} \frac{|\sigma|}{|K|} \bv_{\Ksig} \otimes \normal_\Ksig,
	\end{equation*}
	where $T_e$ is the triangle defined by $\mathbf{x}_{\sigma}$ and the edge $e$, $\normal_{KT_e}$ is the normal vector to triangle $T_e$ pointing out of $K$, and $\bv_{\sigma}= \frac{1}{\sharp \nodes_{\sigma}} \sum_{s \in \nodes_{\sigma}} \bv_{\Ks}$.
Note that the fracture faces are still assumed to be planar, hence the bubble terms are unchanged. Moreover, for cells with planar faces, this new gradient is identical to \eqref{eq:def.nablaK}. 
        With this approach, the same discrete space of displacement $\UDz$ is retained, as the unknowns remain unchanged. 
\end{remark}

\subsection{Interpolators}\label{subsec:interpolators}

The space $\mathcal{C}^0_0(\overline{\Omega}\backslash \Gamma)$ is spanned by functions that are continuous on $\overline{\Omega}\backslash\Gamma$, have limits on each side of $\Gamma$, and vanish on $\partial\Omega$. The interpolator $\IUD:\mathcal C^0_0(\overline{\Omega}\backslash\Gamma)^d\to\UDz$ is defined through its components by setting, for $\bv\in \mathcal C^0_0(\overline{\Omega}\backslash\Gamma)^d$,
\begin{equation}\label{eq:def.ID}
  \begin{aligned}
  (\IUD \bv )_{\Ks}  ={}& \bv_{|K}(\mathbf{x}_s)&&\quad\forall K\in\cells\,,\;\forall s \in \nodes_K,\\
  (\IUD \bv)_{\Ksig} ={}& \frac{1}{|\sigma|} \int_{\sigma} ( \gamma^{\Ksig} \bv - \Pi^{\Ksig}(\IUD\bv) ) &&\quad\forall K\in\cells\,,\;\forall\sigma \in \faces_{\Gamma,K}^+.
  \end{aligned}
\end{equation}
This definition is seemingly recursive, since the DOF corresponding to $\Ksig$ are built using the interpolator $\IUD$ itself. However, the definition \eqref{def:def.operators.sigma} of $\Pi^\Ksig$ shows that $\Pi^{\Ksig}(\IUD\bv)$ only depends on the nodes unknowns $(\IUD\bv)_{\Ks}$, which are properly defined without any self-reference to $\IUD$. We also note that, since $\bv=0$ on $\partial\Omega$, this operator indeed defines an element in $\UDz$.

The interpolator $\IMD:L^2(\Gamma)\to \MD$ on the Lagrange space simply corresponds to averaging on each face: for $\la\in L^2(\Gamma)^d$,
$$
  (\IMD\la)_{\sigma}=\frac{1}{|\sigma|}\int_\sigma\la\qquad\forall\sigma\in\faces_\Gamma.
$$
\subsection{Mixed-variational formulation}\label{subsec:mixed}

We now introduce the numerical scheme for the mixed-variational formulation of the mechanics contact problem  \eqref{Lagrange_meca_contactfriction}: Find $( \bu_{\D}, \la_{\D} ) \in \UDz \times \CD$
such that, for all $( \bv_{\D}, \bs{\mu}_{\D} ) \in \UDz \times \CD$,
  \begin{subequations}
  	\label{mixed_discrete}
  	\begin{align}
  		& \int_{\Omega} \bbsigma_{\D} (\bu_{\D}): \bbeps_{\D}\left(\bv_{\D} \right)  + S_{\mu,\lambda,\D}\left( \bu_{\D}, \bv_{\D} \right) + \int_{\Gamma} \la_{\D} \cdot \jump{\bv_{\D}}_{\D} = \int_\Omega  \mathbf{f} \cdot \wt \Pi^{\D} \bv_{\D}, \label{eq:meca.var.tresca}  \\[2.5ex]
  	& \int_{\Gamma} \left( \mu_{\D} - \la_{\D} \right) \cdot \jump{\bu_{\D}}_{\D}  \le  0. \label{eq:meca.ineq.contact.tresca}
  	\end{align}
  \end{subequations}
Here, $S_{\mu,\lambda,\D}$ is the scaled stabilisation bilinear form defined by
\begin{equation*}%\label{stab}
		 S_{\mu,\lambda,\D}( \bu_{\D}, \bv_{\D}) = \sum_{K\in\cells}(2 \mu_K+\lambda_K)S_K(\bu_\D,\bv_D)
\end{equation*}
where, for each $K\in\cells$, $\mu_K$ and $\lambda_K$ are respectively the average of $\lambda$ and $\mu$ on $K$ and the local stabilisation bilinear form $S_K:\UDz\times\UDz \to\R$ is given by
\begin{equation}\label{eq:def.SK}
\begin{aligned}
	S_K(\bu_\D,\bv_\D)={}& h_{K}^{d-2}  \sum_{s \in \nodes_K} \left(\bu_{\Ks} -\Pi^{K}\bu_{\D} (\mathbf{x}_s) \right) \cdot \left( \bv_{\Ks} -\Pi^{K}\bv_{\D}  (\mathbf{x}_s) \right) \\
		 &+ h_{K}^{d-2}\sum_{\sigma \in \faces_{\Gamma,K}^+} \bu_{\Ksig}  \cdot  \bv_{\Ksig}.
\end{aligned}
\end{equation}

Let us also introduce the unscaled stabilisation bilinear form  
\begin{equation*}%\label{stab_unscaled}
		 S_{\D}( \bu_{\D}, \bv_{\D}) = \sum_{K\in\cells} S_K(\bu_\D,\bv_D). 
\end{equation*}

\begin{remark}
Thanks to the fracture face-wise Lagrange multiplier, the variational inequality  \eqref{eq:meca.ineq.contact.tresca} together with $\la_{\D} \in \CD$ can be equivalently replaced by the following non linear equations (see \cite{beaude2023mixed}), in which $[s]_{\R^+}=\max(0,s)$, and $[\boldsymbol{\xi}]_\trescacof$ is the projection of $\boldsymbol{\xi}\in\R^d$ on the ball of center $0$ and radius $\trescacof$:
\begin{equation}\label{coupling_law}
	%\label{eq_def_T} 
	\left\{\!\!
	\begin{array}{ll}
		& \lambda_{\D,\normal} = \[ \lambda_{\D,\normal} + \beta_{\D,\normal} \jump{\bu_{\D}}_{\D,\normal} \]_{\mathbb{R}^+} \\[3ex]
		& \la_{\D,\tang} = \[ \la_{\D,\tang} + \beta_{\D,\tang} \jump{\bu_{\D}}_{\D,\tang} \]_{\trescacof}
	\end{array}
	\right.
\end{equation}
(these equations can easily be expressed locally to each fracture face). Here, $\beta_{\D,\normal} > 0$, $\beta_{\D,\tang} > 0$ are given face-wise constant functions along $\Gamma$. 
\end{remark}
  
\begin{remark}[Virtual element interpretation]
It is shown in \cite{droniou2023bubble} that the scheme \eqref{mixed_discrete} can be re-interpreted in a Virtual Element presentation.
\end{remark}

\section{Main results}\label{sec:results}

To carry out the convergence analysis of the scheme, we need to define norms on the spaces of unknowns: an $H^1$-like norm on the space of displacement and an $\JSdual$-like norm on the space of Lagrange multipliers.

\begin{definition}[Discrete $H^1$-like semi-norm on $\UD$]
The semi-norm $\NORM{1,\D}{{\cdot}}$ on $\UD$ is defined by: for all $\bu_\D\in \UD$,
\begin{equation}\label{eq:def.normD}
\NORM{1,\D}{\bu_\D}:= \left(\sum_{K\in\cells}\NORM{1,K}{\bu_\D}^2\right)^{1/2}
\mbox{ with } \NORM{1,K}{\bu_\D}=\left(\NORM{L^2(K)}{\nabla^K\bu_\D}^2+S_K(\bu_\D,\bu_\D)\right)^{1/2},
\end{equation}
where $S_K$ is given by \eqref{eq:def.SK}. We note that $\NORM{1,\D}{{\cdot}}$ is genuinely a norm on $\UDz$ since $\Omega\backslash\ov\Gamma$ is connected.
\end{definition}

To define the norm on $\MD$, we recall that $\Gamma=\cup_{i\in I}\Gamma_i$ with each $\Gamma_i$ open connected subset of a hyperplane, and $\Gamma_i\cap\Gamma_j=\emptyset$ if $i\not=j$. We define $\Omega_i^+$ as the intersection of $\Omega$ with the half-plane defined by $\Gamma_i$ and $(\normal^+)_{|\Gamma_i}$; see Figure \ref{fig:split-Gamma}. The space $H^1(\Omega_i^+;\Gamma_i)$ is spanned by functions in $H^1(\Omega_i^+)$ that vanish on $\partial\Omega_i^+\backslash \Gamma_i$.

\begin{figure}[h!]
\centering
\input{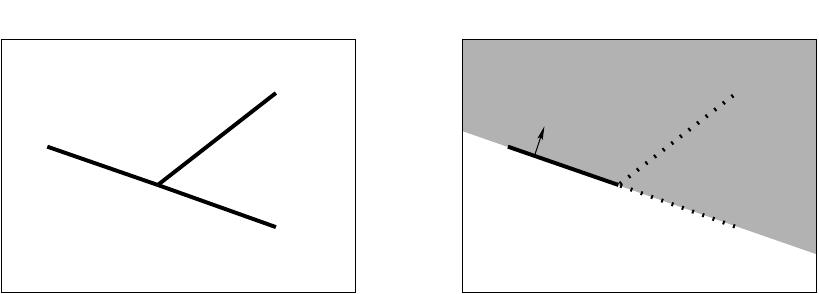_t}
\caption{Splitting of the fracture network and construction of sub-domains used to define
the norm on $\MD$.}
\label{fig:split-Gamma}
\end{figure}

\begin{definition}[$\JSdual$-like norm on $\MD$]
The $\JSdual$-like norm on $\MD$ is defined by: for all $\la_\D\in \MD$,
\begin{equation}
  \NORM{-\nicefrac12,\Gamma}{\la_\D}= \sum_{i\in I}\NORM{-\nicefrac12,\Gamma_i}{\la_\D}
  \ \mbox{ with }\ 
  \NORM{-\nicefrac12,\Gamma_i}{\la_\D}=\sup_{\bv_i\in H^1(\Omega_i^+;\Gamma_i)^d\backslash\{0\}}
  \frac{\int_{\Gamma_i}\la_\D\cdot\bv_i}{\NORM{H^1(\Omega_i^+)}{\bv_i}}.
\label{eq:def.H12.norm}
\end{equation}
\end{definition}

\begin{remark}[Norm on the fracture network] 
The definition of this $\JSdual$-like norm is non-standard due to localisation to each planar component of $\Gamma$. This is however required to properly take into account the possible complex topology of the fracture network (triple -- or more -- intersections of fractures, etc.) and the fact that the inf-sup condition below is based on the jump of the functions, not their traces. See Remark \ref{rem:jump.norm} for more insight on this.

In case of a simple network in which no more than two planar fractures intersect at a given location, since these intersections (and the fracture tips) have a zero $2$-capacity it can be checked that the norm \eqref{eq:def.H12.norm} is equivalent to a more standard $\Hhalfdual$-norm on the network. 
\end{remark}

\begin{proposition}[Existence and uniqueness result]\label{wellposedness}
There exists a unique solution $(\bu_{\D}, \la_{\D}) \in \UDz \times \CD$ to \eqref{mixed_discrete}. 
\end{proposition}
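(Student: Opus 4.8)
The plan is to recast \eqref{mixed_discrete} as a variational inequality of the first kind for $\bu_\D$ alone, after eliminating $\la_\D$, and then invoke the standard theory of such problems (Lions–Stampacchia). First I would introduce the energy functional
\[
J(\bv_\D) = \frac12\Big(\int_\Omega \bbsigma_\D(\bv_\D):\bbeps_\D(\bv_\D) + S_{\mu,\lambda,\D}(\bv_\D,\bv_\D)\Big) - \int_\Omega \mathbf f\cdot\wt\Pi^\D\bv_\D + \int_\Gamma \trescacof\,|\jump{\bv_\D}_{\D,\tang}|
\]
on the closed convex set $\mathcal K=\{\bv_\D\in\UDz : \jump{\bv_\D}_{\D,\normal}\le 0\}$, and argue that \eqref{mixed_discrete} is equivalent to minimising $J$ over $\mathcal K$. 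The equivalence is classical: given the discrete saddle-point structure with face-wise constant multipliers, \eqref{eq:meca.ineq.contact.tresca} together with $\la_\D\in\CD$ encodes exactly the Tresca/contact constraints face by face (this is precisely the content of the reformulation \eqref{coupling_law}), so testing \eqref{eq:meca.var.tresca} against admissible $\bv_\D$ and using the sub-differential of $\bv_\D\mapsto\int_\Gamma\trescacof|\jump{\bv_\D}_{\D,\tang}|$ yields the Euler inequality for $J$ on $\mathcal K$; conversely, a minimiser of $J$ produces, via the face-wise constant structure, a multiplier $\la_\D\in\CD$ recovering \eqref{mixed_discrete}.

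Next I would establish the three ingredients needed to apply the direct method / Lions–Stampacchia to $J$ on $\mathcal K$: (i) $\mathcal K$ is nonempty (it contains $0$), closed and convex in the finite-dimensional space $\UDz$; (ii) the quadratic part $a_\D(\bv_\D,\bv_\D):=\int_\Omega\bbsigma_\D(\bv_\D):\bbeps_\D(\bv_\D)+S_{\mu,\lambda,\D}(\bv_\D,\bv_\D)$ is coercive on $\UDz$ with respect to $\NORM{1,\D}{{\cdot}}$; (iii) the non-smooth term $\bv_\D\mapsto\int_\Gamma\trescacof|\jump{\bv_\D}_{\D,\tang}|$ is convex, continuous and nonnegative, so $J$ is strictly convex, continuous and coercive on $\UDz$. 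Item (ii) is the crux: I would lower-bound $a_\D(\bv_\D,\bv_\D)$ by $2\mu_{\min}\big(\NORM{L^2}{\bbeps_\D\bv_\D}^2 + S_\D(\bv_\D,\bv_\D)\big)$ (discarding the nonnegative $\lambda\,\div_\D$ and keeping a positive multiple of the stabilisation), and then control $\NORM{L^2}{\nabla^\D\bv_\D}^2+S_\D(\bv_\D,\bv_\D)$ by $\NORM{L^2}{\bbeps_\D\bv_\D}^2+S_\D(\bv_\D,\bv_\D)$ via the discrete Korn inequality announced in the introduction (and, presumably, proved in Section \ref{sec:proofs}); combined with the fact that $\NORM{1,\D}{{\cdot}}$ is a genuine norm on $\UDz$ (stated after \eqref{eq:def.normD}), this gives $a_\D(\bv_\D,\bv_\D)\gtrsim\NORM{1,\D}{\bv_\D}^2$. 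Strict convexity of the quadratic part on $\UDz$ then yields existence and uniqueness of the minimiser $\bu_\D$.

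Finally I would recover $\la_\D$ and its uniqueness. Existence follows by the saddle-point / Lagrange-multiplier argument applied face by face: on each $\sigma\in\faces_\Gamma$ the minimiser satisfies a scalar-vector optimality condition from which the constant values $(\la_\D)_{|\sigma}$ are read off, automatically landing in $\CD$; equivalently one checks that \eqref{coupling_law}, viewed as a fixed-point relation with $\bu_\D$ frozen, has a unique solution $\la_\D$ (the projections $[{\cdot}]_{\R^+}$ and $[{\cdot}]_\trescacof$ being $1$-Lipschitz, a single evaluation of the right-hand side already yields $\la_\D$ once $\jump{\bu_\D}_\D$ is known — indeed these are not genuine fixed-point equations but explicit once $\beta_{\D,\normal},\beta_{\D,\tang}$ and $\bu_\D$ are fixed, as noted in the remark). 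Uniqueness of $\la_\D$ then follows from uniqueness of $\bu_\D$ together with these explicit relations, or alternatively from testing the difference of two multiplier solutions against a $\bv_\D\in\UDz$ realising a prescribed face-wise jump — here one may invoke the discrete inf-sup condition, but strictly speaking the face-wise constant structure makes the conclusion elementary.

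The step I expect to be the main obstacle is item (ii), the coercivity of $a_\D$: it hinges on the discrete Korn inequality for $\UDz$ accommodating both the bubble degrees of freedom and the fracture-network jumps, which is itself one of the new technical contributions flagged in the introduction. Everything else — convexity, closedness of $\mathcal K$, the Lions–Stampacchia conclusion, and the elimination/recovery of $\la_\D$ — is routine once coercivity is in hand.
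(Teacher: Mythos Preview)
Your plan is correct and more self-contained than what the paper does. The paper simply cites \cite[Theorem~3.9]{HASLINGER1996313} for existence via abstract saddle-point theory, then invokes the discrete Korn inequality \eqref{eq:Korn} for uniqueness of $\bu_\D$ and the discrete inf-sup condition \eqref{eq:inf-sup} for uniqueness of $\la_\D$. Your primal reformulation via minimisation of $J$ on $\mathcal K$ and Lions--Stampacchia is an equivalent, more explicit route; both rest on the same key ingredient (discrete Korn) for coercivity and uniqueness of $\bu_\D$.

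One small slip: the coupling relations \eqref{coupling_law} are \emph{not} explicit in $\la_\D$ once $\bu_\D$ is frozen---$\la_\D$ appears on both sides, and when $\jump{\bu_\D}_{\D,\normal}=0$ on a face the equation $\lambda_{\D,\normal}=[\lambda_{\D,\normal}]_{\R^+}$ admits every $\lambda_{\D,\normal}\ge 0$, so these relations alone do not determine $\la_\D$. Your alternative argument (test the difference of two multipliers against $\bv_\D$ realising a prescribed face-wise jump) is the right one; as you note, the bubble DOFs make the jump map $\bv_\D\mapsto\jump{\bv_\D}_\D$ surjective onto $\MD$, which is all that is needed for uniqueness of $\la_\D$ and is indeed elementary---the paper packages this into the inf-sup condition \eqref{eq:inf-sup}, but the weaker surjectivity already suffices here.
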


\begin{proof}
\cite[Theorem 3.9]{HASLINGER1996313} gives the existence of a solution. The uniqueness of $\bu_\D$ derives from the discrete Korn inequality \eqref{eq:Korn} below and the uniqueness of $\la_\D$ from the discrete inf-sup property \eqref{eq:inf-sup} below. 
\end{proof}

To state the error estimates, we introduce the following notations:
\begin{itemize}
\item $\ProjFG$ is the orthogonal projection on $\mathbb{P}^0(\faces_\Gamma)$.
\item The (primal) consistency error is: for $\bv_\D\in\UDz$,
\begin{equation}\label{def:CD}
	C_{\D}(\bu,\bv_{\D}) = \(\NORM{L^2(\Omega\backslash\ov\Gamma)}{\grad \bu - \grad^{\D} \bv_{\D}}^2 +  S_{\D}( \bv_{\D},  \bv_{\D})) \)^{1/2}.
\end{equation}
\item Letting
\begin{equation}\label{def:esp.strain}
\mathbf{W} = \big\{  \bbsigma \in H_{\div}(\Omega\backslash\ov\Gamma; \mathcal{S}^d(\R)),\,:\, \gamma_\normal^+ \bbsigma +  \gamma_\normal^-\bbsigma = \mathbf{0},\, \gamma_\normal^+ \bbsigma \in L^2(\Gamma)^d \big\}, 
\end{equation}
(where $\mathcal{S}^d(\R)$ is the space of symmetric $d\times d$ matrices with real coefficients), 
the adjoint consistency error (or limit-conformity measure) is defined, for $\bbsigma\in\mathbf{W}$, by
\begin{equation}\label{def:WD}
\begin{aligned}
\mathcal{W}_{\D}(\bbsigma) ={}& \sup_{\bv_\D\in\UDz}\frac{w_{\D}(\bbsigma,\bv_{\D})}{\NORM{1,\D}{\bv_\D}},\\
\mbox{where } w_{\D}(\bbsigma,\bv_{\D}) ={}& 	-\int_{\Omega} \bbsigma: \bbeps_{\D}(\bv_{\D} )  + \int_{\Gamma} \gamma_\normal^+ \bbsigma \cdot \jump{\bv_{\D}}_{\D} - {}  \int_{\Omega}   \wt{\Pi}^{\D}\bv_{\D}\cdot \div \bbsigma.
\end{aligned}
\end{equation}
\item The discrete $\Hhalf$-norm is defined on $L^2(\Gamma)^d$ by: for $\m \in L^2(\Gamma)^d$,
\begin{equation}\label{def:norm.1/2D}
	 \NORM{\nicefrac 12,\D}{\m} = \(\sum_{\sigma \in \faces_{\Gamma}} h_{\sigma}^{-1} \NORM{L^2(\sigma)}{\m}^2\)^{1/2}. 
\end{equation}
\end{itemize}

\begin{theorem}[Abstract error estimate]\label{abstr.err.est}
For $(\bu_{\D}, \la_{\D})$ solution of \eqref{mixed_discrete} and $(\bu, \la)$ solution of \eqref{Lagrange_meca_contactfriction} with $\la \in L^2(\Gamma)^d$, we have the following abstract error estimate	
\begin{equation}\label{estimat_error}
\begin{aligned}
	\NORM{L^2(\Omega\backslash\ov\Gamma)}{\grad^{\D}\bu_\D - \grad \bu} +	{}&\NORM{-\nicefrac 12,\Gamma}{\la_\D - \la} \lsim  \NORM{-\nicefrac 12,\Gamma}{\la - \ProjFG \la}  + \mathcal{W}_{\D}(\bbsigma(\bu)) \\
	 + \inf_{\bv_{\D} \in \UDz } \bigg\{ \Big( &\NORM{L^2(\Gamma)}{\la - \ProjFG \la}\NORM{L^2(\Gamma)}{\jump{\bv_{\D}}_{\D} - \jump{\bu}}\Big)^{1/2} \\
	& + \NORM{\nicefrac 12,\D}{\jump{\bv_{\D}}_{\D} - \ProjFG\jump{\bu}}   + C_{\D}(\bu,\bv_{\D})\bigg\}. 
\end{aligned}
\end{equation}
\end{theorem}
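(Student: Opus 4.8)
The plan is to prove \eqref{estimat_error} with the infimum replaced by its value at an arbitrary $\hat\bu_\D\in\UDz$, and then take the infimum over $\hat\bu_\D$. Write $\bw_\D=\bu_\D-\hat\bu_\D\in\UDz$ and abbreviate $a_\D(\bx_\D,\by_\D):=\int_\Omega\bbsigma_\D(\bx_\D):\bbeps_\D(\by_\D)+S_{\mu,\lambda,\D}(\bx_\D,\by_\D)$, so that \eqref{eq:meca.var.tresca} reads $a_\D(\bu_\D,\bv_\D)+\int_\Gamma\la_\D\cdot\jump{\bv_\D}_\D=\int_\Omega\mathbf f\cdot\wt\Pi^\D\bv_\D$. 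Since $\NORM{L^2(\Omega\backslash\ov\Gamma)}{\grad^\D\bw_\D}\le\NORM{1,\D}{\bw_\D}$, $\NORM{L^2(\Omega\backslash\ov\Gamma)}{\grad\bu-\grad^\D\hat\bu_\D}\le C_\D(\bu,\hat\bu_\D)$ and $\NORM{-\nicefrac12,\Gamma}{\la_\D-\la}\le\NORM{-\nicefrac12,\Gamma}{\la_\D-\ProjFG\la}+\NORM{-\nicefrac12,\Gamma}{\la-\ProjFG\la}$, it suffices to bound $\NORM{1,\D}{\bw_\D}$ and $\NORM{-\nicefrac12,\Gamma}{\la_\D-\ProjFG\la}$ by the right-hand side of \eqref{estimat_error}. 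We shall repeatedly use that $\bbsigma(\bu)\in\mathbf W$ with $\div\bbsigma(\bu)=-\mathbf f$ and $\gamma_\normal^+\bbsigma(\bu)=-\la$, together with the orthogonality of $\ProjFG$: since $\jump{\bv_\D}_\D$ and $\la_\D-\ProjFG\la$ are piecewise constant on $\faces_\Gamma$, one has $\int_\Gamma(\la-\ProjFG\la)\cdot\jump{\bv_\D}_\D=0$ for all $\bv_\D\in\UDz$ and $\int_\Gamma(\la_\D-\ProjFG\la)\cdot(\mathbf g-\ProjFG\mathbf g)=0$ for all $\mathbf g\in L^2(\Gamma)^d$.

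\textbf{Multiplier estimate.} Apply the discrete inf-sup condition \eqref{eq:inf-sup} to $\la_\D-\ProjFG\la$. For $\bv_\D\in\UDz\backslash\{0\}$, the orthogonality above gives $\int_\Gamma(\la_\D-\ProjFG\la)\cdot\jump{\bv_\D}_\D=\int_\Gamma(\la_\D-\la)\cdot\jump{\bv_\D}_\D$; inserting \eqref{eq:meca.var.tresca}, substituting $\mathbf f=-\div\bbsigma(\bu)$, recognising the functional $w_\D(\bbsigma(\bu),\bv_\D)$ of \eqref{def:WD}, and introducing $\bbsigma_\D(\hat\bu_\D)$ yields
\[
\int_\Gamma(\la_\D-\la)\cdot\jump{\bv_\D}_\D=w_\D(\bbsigma(\bu),\bv_\D)+\int_\Omega(\bbsigma(\bu)-\bbsigma_\D(\hat\bu_\D)):\bbeps_\D(\bv_\D)-S_{\mu,\lambda,\D}(\hat\bu_\D,\bv_\D)-a_\D(\bw_\D,\bv_\D).
\]
The four terms are bounded by $\NORM{1,\D}{\bv_\D}$ times, respectively, $\mathcal W_\D(\bbsigma(\bu))$ (definition of $\mathcal W_\D$), $C_\D(\bu,\hat\bu_\D)$ (since $\bbsigma$ and $\bbsigma_\D$ are bounded functions of the symmetric gradient and divergence, plus Cauchy--Schwarz), $C_\D(\bu,\hat\bu_\D)$ (Cauchy--Schwarz on $S_{\mu,\lambda,\D}$), and $\NORM{1,\D}{\bw_\D}$ (continuity of $a_\D$). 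Dividing by $\NORM{1,\D}{\bv_\D}$ and taking the supremum gives the multiplier bound
\[
\NORM{-\nicefrac12,\Gamma}{\la_\D-\ProjFG\la}\lsim\mathcal W_\D(\bbsigma(\bu))+C_\D(\bu,\hat\bu_\D)+\NORM{1,\D}{\bw_\D}.
\]

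\textbf{Displacement estimate and the contact term.} Since $2\mu_K+\lambda_K\gtrsim1$, $a_\D(\bw_\D,\bw_\D)\gtrsim\NORM{L^2(\Omega\backslash\ov\Gamma)}{\bbeps_\D(\bw_\D)}^2+S_\D(\bw_\D,\bw_\D)$, so the discrete Korn inequality \eqref{eq:Korn} gives $\NORM{1,\D}{\bw_\D}^2\lsim a_\D(\bw_\D,\bw_\D)=a_\D(\bu_\D,\bw_\D)-a_\D(\hat\bu_\D,\bw_\D)$. Expanding $a_\D(\bu_\D,\bw_\D)$ exactly as in the previous paragraph, $a_\D(\bu_\D,\bw_\D)=w_\D(\bbsigma(\bu),\bw_\D)+\int_\Omega\bbsigma(\bu):\bbeps_\D(\bw_\D)+\int_\Gamma(\la-\la_\D)\cdot\jump{\bw_\D}_\D$, whence
\[
\NORM{1,\D}{\bw_\D}^2\lsim\big(\mathcal W_\D(\bbsigma(\bu))+C_\D(\bu,\hat\bu_\D)\big)\NORM{1,\D}{\bw_\D}+\int_\Gamma(\la-\la_\D)\cdot\jump{\bw_\D}_\D.
\]
The contact term is the heart of the proof. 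Split $\jump{\bw_\D}_\D=\jump{\bu_\D}_\D-\jump{\hat\bu_\D}_\D$. For the first piece, orthogonality gives $\int_\Gamma(\la-\la_\D)\cdot\jump{\bu_\D}_\D=\int_\Gamma(\ProjFG\la-\la_\D)\cdot\jump{\bu_\D}_\D\le0$: indeed $\ProjFG\la\in\CD$, because $\trescacof$ is piecewise constant on the trace mesh so the face averages of $\la_\normal\ge0$ and of $\la_\tang$ (with $|\la_\tang|\le\trescacof$) still satisfy the cone constraints, and \eqref{eq:meca.ineq.contact.tresca} applies with $\m_\D=\ProjFG\la$. For the second piece, $\int_\Gamma(\la_\D-\la)\cdot\jump{\hat\bu_\D}_\D=\int_\Gamma(\la_\D-\la)\cdot(\jump{\hat\bu_\D}_\D-\jump{\bu})+\int_\Gamma(\la_\D-\la)\cdot\jump{\bu}$, and the last integral is $\le0$ by \eqref{Lagrange_meca_contactfriction_2} with $\m=\la_\D\in\bm{C}_f$ (allowed since $\CD\subset\bm{C}_f$). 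Decomposing $\la_\D-\la=(\la_\D-\ProjFG\la)+(\ProjFG\la-\la)$ and $\jump{\hat\bu_\D}_\D-\jump{\bu}=(\jump{\hat\bu_\D}_\D-\ProjFG\jump{\bu})+(\ProjFG\jump{\bu}-\jump{\bu})$, the two cross terms vanish by orthogonality of $\ProjFG$, so
\[
\int_\Gamma(\la-\la_\D)\cdot\jump{\bw_\D}_\D\le\int_\Gamma(\la_\D-\ProjFG\la)\cdot(\jump{\hat\bu_\D}_\D-\ProjFG\jump{\bu})+\int_\Gamma(\la-\ProjFG\la)\cdot(\jump{\bu}-\ProjFG\jump{\bu}).
\]
The first integral is $\lsim\NORM{-\nicefrac12,\Gamma}{\la_\D-\ProjFG\la}\,\NORM{\nicefrac12,\D}{\jump{\hat\bu_\D}_\D-\ProjFG\jump{\bu}}$, by a duality estimate for piecewise-constant data (proved by lifting the face data to $H^1(\Omega_i^+)^d$ with a mesh-dependent bound, in the spirit of the inf-sup construction); the second is $\le\NORM{L^2(\Gamma)}{\la-\ProjFG\la}\,\NORM{L^2(\Gamma)}{\jump{\bu}-\ProjFG\jump{\bu}}\le\NORM{L^2(\Gamma)}{\la-\ProjFG\la}\,\NORM{L^2(\Gamma)}{\jump{\hat\bu_\D}_\D-\jump{\bu}}$ by $L^2$-optimality of $\ProjFG$.

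\textbf{Conclusion.} Substitute the multiplier bound for $\NORM{-\nicefrac12,\Gamma}{\la_\D-\ProjFG\la}$ in the displacement estimate; the resulting contributions $\NORM{1,\D}{\bw_\D}\,\NORM{\nicefrac12,\D}{\jump{\hat\bu_\D}_\D-\ProjFG\jump{\bu}}$ and $(\mathcal W_\D(\bbsigma(\bu))+C_\D(\bu,\hat\bu_\D))\NORM{1,\D}{\bw_\D}$ are handled by Young's inequality, moving a small multiple of $\NORM{1,\D}{\bw_\D}^2$ to the left-hand side. This gives
\[
\NORM{1,\D}{\bw_\D}\lsim\mathcal W_\D(\bbsigma(\bu))+C_\D(\bu,\hat\bu_\D)+\NORM{\nicefrac12,\D}{\jump{\hat\bu_\D}_\D-\ProjFG\jump{\bu}}+\big(\NORM{L^2(\Gamma)}{\la-\ProjFG\la}\,\NORM{L^2(\Gamma)}{\jump{\hat\bu_\D}_\D-\jump{\bu}}\big)^{1/2},
\]
and re-injecting into the multiplier bound yields the same estimate for $\NORM{-\nicefrac12,\Gamma}{\la_\D-\ProjFG\la}$. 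Combining with the triangle inequalities from the Setup and taking the infimum over $\hat\bu_\D\in\UDz$ proves \eqref{estimat_error}. I expect the main obstacle to be the contact term: making the continuous and discrete variational inequalities cooperate through the single admissible choice $\m_\D=\ProjFG\la$, and then using the $L^2$-orthogonality of $\ProjFG$ against piecewise-constant functions to kill the stray cross terms so that precisely the square-root term and the discrete $\Hhalf$-term survive; a secondary difficulty is closing the displacement--multiplier coupling, which needs the inf-sup estimate and the final Young absorption, together with the duality estimate for piecewise-constant data required to pair $\la_\D-\ProjFG\la$ with the $\NORM{\nicefrac12,\D}{\cdot}$-norm.
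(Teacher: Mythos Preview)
Your proof is correct and follows essentially the same approach as the paper: the same identity obtained by subtracting the discrete scheme from the consistency relation for $\bbsigma(\bu)$, the same use of $\ProjFG\la\in\CD$ and $\la_\D\in\bm C_f$ to drop the two variational-inequality contributions with the correct sign, the same orthogonality-based decomposition of the remaining contact term, and the same pairing of $\la_\D-\ProjFG\la$ with $\NORM{\nicefrac12,\D}{\cdot}$ via the lemma $\NORM{-\nicefrac12,\D}{\cdot}\lsim\NORM{-\nicefrac12,\Gamma}{\cdot}$ (which is exactly the ``duality estimate for piecewise-constant data'' you invoke). The only difference is organizational: the paper combines the displacement and multiplier bounds into a single inequality with $\NORM{1,\D}{\bu_\D-\bv_\D}^2+\NORM{-\nicefrac12,\Gamma}{\la_\D-\ProjFG\la}^2$ on the left and absorbs the $\NORM{-\nicefrac12,\Gamma}$ contribution by Young directly, whereas you substitute the multiplier bound into the contact term and absorb $\NORM{1,\D}{\bw_\D}$ instead---both closures are equivalent.
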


\begin{proof}
See Section \ref{sec:proof.abstract.error}.
\end{proof}

In the following theorem, we denote by $H^2(\cells)$ (resp.~$H^1(\faces_\Gamma)$) the space of functions defined on $\Omega$ that are $H^2$ on each $K\in\cells$ (resp.~defined on $\Gamma$ and $H^1$ on each $\sigma\in\faces_\Gamma$). These spaces are endowed with their usual broken semi-norms. 

\begin{theorem}[Error estimate]\label{error_estimate}
 Let $(\bu,\la)$ be the solution to \eqref{Lagrange_meca_contactfriction} and assume that $\bu\in H^2(\cells)$ and $\la\in H^1(\faces_\Gamma)$. Then the solution $(\bu_{\D}, \la_{\D})$ of \eqref{mixed_discrete} satisfies the following error estimate:
$$
	\NORM{L^2(\Omega\backslash\ov\Gamma)}{\grad^{\D}\bu_\D - \grad \bu} +	\NORM{-\nicefrac 12,\Gamma}{\la_\D - \la} \lsim h_\D 
	\(\SEMINORM{H^1(\faces_\Gamma)}{\la} + \SEMINORM{H^2(\cells)}{\bu} + \SEMINORM{H^1(\faces_\Gamma)}{\jump{\bu}}\). 
$$
\end{theorem}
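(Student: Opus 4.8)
The plan is to deduce the convergence rate from the abstract error estimate \eqref{estimat_error} by bounding each of the five terms on its right-hand side under the regularity assumptions $\bu\in H^2(\cells)$ and $\la\in H^1(\faces_\Gamma)$. The natural choice for the test function in the infimum is $\bv_\D=\IUD\bu$ (the interpolant of $\bu$); this is legitimate because $\bu\in H^2(\cells)\subset \mathcal C^0_0(\ov\Omega\backslash\Gamma)^d$ in dimension $d\le 3$, so $\IUD\bu$ is well defined and lies in $\UDz$. With this choice, the five terms to estimate are: (i) $\NORM{-\nicefrac12,\Gamma}{\la-\ProjFG\la}$; (ii) $\mathcal W_\D(\bbsigma(\bu))$; (iii) $\big(\NORM{L^2(\Gamma)}{\la-\ProjFG\la}\,\NORM{L^2(\Gamma)}{\jump{\IUD\bu}_\D-\jump{\bu}}\big)^{1/2}$; (iv) $\NORM{\nicefrac12,\D}{\jump{\IUD\bu}_\D-\ProjFG\jump{\bu}}$; and (v) the primal consistency error $C_\D(\bu,\IUD\bu)$.

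Terms (i) and (iii) are handled by standard polynomial approximation on each fracture face: since $\la\in H^1(\faces_\Gamma)$, the $L^2$-projection error satisfies $\NORM{L^2(\sigma)}{\la-\ProjFG\la}\lsim h_\sigma\SEMINORM{H^1(\sigma)}{\la}$, which gives $\NORM{L^2(\Gamma)}{\la-\ProjFG\la}\lsim h_\D\SEMINORM{H^1(\faces_\Gamma)}{\la}$; for (i), testing the definition \eqref{eq:def.H12.norm} of the $\JSdual$-like norm with $\bv_i\in H^1(\Omega_i^+;\Gamma_i)^d$ and using the trace inequality $\NORM{L^2(\Gamma_i)}{\bv_i}\lsim\NORM{H^1(\Omega_i^+)}{\bv_i}$ yields $\NORM{-\nicefrac12,\Gamma_i}{\la-\ProjFG\la}\lsim\NORM{L^2(\Gamma_i)}{\la-\ProjFG\la}\lsim h_\D\SEMINORM{H^1(\faces_\Gamma)}{\la}$. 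For (iii), (iv) and (v) I need consistency estimates for the reconstruction operators $\jump{\IUD\bu}_\D$ and $\grad^\D(\IUD\bu)$ against $\bu$; these are expected to be proved (or recalled) in Section \ref{sec:proofs} or the Appendix, and they should read $\NORM{L^2(K)}{\grad\bu-\grad^K(\IUD\bu)}+S_K(\IUD\bu,\IUD\bu)^{1/2}\lsim h_K\SEMINORM{H^2(K)}{\bu}$ (this bounds (v)), $\NORM{L^2(\sigma)}{\jump{\IUD\bu}_\D-\jump{\bu}}\lsim h_\sigma\,(\cdots)$ (bounding (iii) after multiplication by the $O(h_\D^{1/2})$ factor from $\la-\ProjFG\la$), and $h_\sigma^{-1/2}\NORM{L^2(\sigma)}{\jump{\IUD\bu}_\D-\ProjFG\jump{\bu}}\lsim h_\sigma^{1/2}(\cdots)$ for (iv), using that $\jump{\IUD\bu}_\D$ is already piecewise constant so $\jump{\IUD\bu}_\D-\ProjFG\jump{\bu}=\ProjFG(\jump{\IUD\bu}_\D-\jump{\bu})$ and invoking $L^2$-stability of $\ProjFG$ together with the jump consistency estimate. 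The term (ii), $\mathcal W_\D(\bbsigma(\bu))$, is the adjoint-consistency (limit-conformity) measure; since $\bu\in H^2(\cells)$ implies $\bbsigma(\bu)\in\mathbf W$ with $\bbsigma(\bu)$ piecewise $H^1$, a Green's formula / element-wise integration-by-parts argument comparing $\int_\Omega\bbsigma(\bu):\bbeps_\D(\bv_\D)$ with $\int_\Gamma\gamma_\normal^+\bbsigma(\bu)\cdot\jump{\bv_\D}_\D$ and $\int_\Omega\wt\Pi^\D\bv_\D\cdot\div\bbsigma(\bu)$, exploiting that the reconstructions $\grad^\D$ and $\jump{\cdot}_\D$ reproduce affine functions and have the right moments on faces, yields $\mathcal W_\D(\bbsigma(\bu))\lsim h_\D\SEMINORM{H^1(\cells)}{\bbsigma(\bu)}\lsim h_\D\SEMINORM{H^2(\cells)}{\bu}$.

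Collecting (i)--(v) gives the stated bound $\lsim h_\D(\SEMINORM{H^1(\faces_\Gamma)}{\la}+\SEMINORM{H^2(\cells)}{\bu}+\SEMINORM{H^1(\faces_\Gamma)}{\jump{\bu}})$, where the last term enters through the jump consistency in (iv) because $\jump{\bu}\in H^1(\faces_\Gamma)$ controls the face-oscillation of $\jump{\bu}$ relevant to the $\NORM{\nicefrac12,\D}{\cdot}$ norm. The main obstacle is the adjoint-consistency estimate (ii): unlike in conforming methods, $w_\D(\bbsigma(\bu),\bv_\D)$ does not vanish and one must carefully track how the face-integration-by-parts on $\de K$ splits into a boundary-of-fracture contribution (producing the $\int_\Gamma\gamma_\normal^+\bbsigma\cdot\jump{\cdot}_\D$ term, which must be matched using the face-moment properties of $\Pi^\Ksig$ and the bubble correction $\bv_\Ksig$) and interior-face contributions that telescope thanks to $\gamma_\normal^{K\sigma}\bbsigma+\gamma_\normal^{L\sigma}\bbsigma=0$; obtaining the clean $O(h_\D)$ rate there, uniformly over the fracture network's corners and intersections, is the delicate point, and it is presumably where the bulk of Section \ref{sec:proofs} is spent. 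All other estimates are routine polynomial-approximation and trace arguments.
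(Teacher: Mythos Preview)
Your overall plan is the same as the paper's: start from the abstract estimate \eqref{estimat_error}, take $\bv_\D=\IUD\bu$, and bound the five terms via polynomial approximation (for $\la-\ProjFG\la$), a primal consistency lemma for $C_\D(\bu,\IUD\bu)$, a jump consistency lemma, and an adjoint-consistency lemma for $\mathcal W_\D(\bbsigma(\bu))$. Your treatment of (i), (ii), (iii), (v) matches the paper's Lemmas \ref{lemma:approx.proj.Gamma}, \ref{lem:adjoint.consistency}, \ref{lem:consistency.jump} and \ref{lem:consistency.nabla}. (For (i) you use a cruder bound than the paper's duality argument, but $O(h_\D)$ suffices.)

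There is, however, a real gap in your handling of term (iv). Your route is to write $\jump{\IUD\bu}_\D-\ProjFG\jump{\bu}=\ProjFG(\jump{\IUD\bu}_\D-\jump{\bu})$, apply the $L^2$-stability of $\ProjFG$, and then invoke the $L^2$ jump-consistency estimate $\NORM{L^2(\sigma)}{\jump{\IUD\bu}_\sigma-\jump{\bu}}\lsim h_\sigma(\cdots)$. But the leading contribution in that $L^2$ error is precisely $\NORM{L^2(\sigma)}{\pi^0_\sigma\jump{\bu}-\jump{\bu}}\lsim h_\sigma\SEMINORM{H^1(\sigma)}{\jump{\bu}}$, and the crude stability bound $\NORM{L^2}{\ProjFG w}\le\NORM{L^2}{w}$ does \emph{not} kill it. Carrying this through the discrete $H^{1/2}$ norm gives only
\[
\NORM{\nicefrac12,\D}{\jump{\IUD\bu}_\D-\ProjFG\jump{\bu}}\lsim h_\D^{1/2}\SEMINORM{H^1(\faces_\Gamma)}{\jump{\bu}}+h_\D\SEMINORM{H^2(\cells)}{\bu},
\]
so your plan yields an overall $O(h_\D^{1/2})$ estimate, not $O(h_\D)$. (Relatedly, your attribution of the $\SEMINORM{H^1(\faces_\Gamma)}{\jump{\bu}}$ term to (iv) is off: in the paper it enters only through (iii), via \eqref{eq:approx.jump.L2}.)

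The paper avoids this loss by exploiting the bubble degree of freedom in $\IUD$: by the definitions \eqref{eq:def.ID} and \eqref{eq:def.jump.sigma} one has the exact identity
\[
\jump{\IUD\bu}_\sigma=\pi^0_\sigma\big(\gamma^{\Ksig}\bu-\Pi^{\Lsig}(\IUD\bu)\big),
\]
so that
\[
\NORM{L^2(\sigma)}{\jump{\IUD\bu}_\sigma-\pi^0_\sigma\jump{\bu}}\le \NORM{L^2(\sigma)}{\gamma^{\Lsig}\bu-\Pi^{\Lsig}(\IUD\bu)}\lsim h_L^{3/2}\SEMINORM{H^2(L)}{\bu},
\]
which yields the needed $\NORM{\nicefrac12,\D}{\jump{\IUD\bu}_\D-\ProjFG\jump{\bu}}\lsim h_\D\SEMINORM{H^2(\cells)}{\bu}$ (this is \eqref{eq:approx.jump.H12}). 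In short, the bubble correction makes $\jump{\IUD\bu}_\sigma$ match $\pi^0_\sigma\jump{\bu}$ to order $h_\sigma^{3/2}$, not $h_\sigma$; your plan needs to invoke this structure, not merely the $L^2$-boundedness of $\ProjFG$.
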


\begin{proof}
See Section \ref{sec:proof.error}.
\end{proof}

\section{Proof of the error estimate}\label{sec:proofs}

\subsection{Inf-sup condition}

We prove in this section the following result, which establishes that the bubble degree of freedom in \eqref{UD_dof} plays its role controlling the Lagrange multiplier through the jump of the displacements.

\begin{theorem}[Discrete inf-sup condition]\label{th:infsup}
It holds
\begin{equation}\label{eq:inf-sup}
\sup_{\bv_\D\in\UDz\backslash\{0\}}\frac{\int_\Gamma \la_\D\cdot\jump{\bv_\D}_\D}{\NORM{1,\D}{\bv_\D}}\gtrsim \NORM{-\nicefrac12,\Gamma}{\la_\D}\qquad\forall \la_\D\in \MD. 
\end{equation}
\end{theorem}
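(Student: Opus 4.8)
The plan is to establish the inf-sup condition component by component over the planar pieces $\Gamma_i$ of the fracture network, since both the target norm $\NORM{-\nicefrac12,\Gamma}{\cdot}$ and, as we will see, the natural test functions decompose along this splitting. Fix $\la_\D\in\MD$. By definition of the norm \eqref{eq:def.H12.norm}, for each $i\in I$ there is a (nearly) optimal $\bv_i\in H^1(\Omega_i^+;\Gamma_i)^d$ with $\NORM{H^1(\Omega_i^+)}{\bv_i}=1$ and $\int_{\Gamma_i}\la_\D\cdot\bv_i\gtrsim \NORM{-\nicefrac12,\Gamma_i}{\la_\D}$. The first step is to extend each $\bv_i$ by zero outside $\Omega_i^+$ to get $\bv\in\U_0$ (possible because $\bv_i$ vanishes on $\partial\Omega_i^+\backslash\Gamma_i$ and the $\Omega_i^+$ are, after possibly restricting to one side, essentially disjoint up to the zero-capacity intersection set), and to regularise/mollify it so that it lies in $\mathcal C^0_0(\overline\Omega\backslash\Gamma)^d$ while keeping control of $\NORM{\U_0}{\bv}$ and of $\int_\Gamma\la_\D\cdot\jump{\bv}$; note that after the zero-extension the one-sided trace of $\bv$ on $\Gamma_i$ from the negative side is $0$, so $\jump{\bv}_{|\Gamma_i}=\pm\bv_i$ up to sign bookkeeping with $\normal^+$.

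The second step is to interpolate: set $\bv_\D=\IUD\bv\in\UDz$ and estimate the two quantities in \eqref{eq:inf-sup}. For the numerator, the key is that the discrete jump $\jump{\bv_\D}_\D$ was designed (see \eqref{eq:def.jump.sigma}, and the definition \eqref{eq:def.ID} of the bubble component $(\IUD\bv)_{\Ksig}$) precisely so that its face average matches the exact-trace average: a direct computation gives $\int_\sigma\jump{\IUD\bv}_\D=\int_\sigma\jump{\bv}$ for every $\sigma\in\faces_\Gamma$. Since $\la_\D$ is face-wise constant, this yields exactly $\int_\Gamma\la_\D\cdot\jump{\bv_\D}_\D=\int_\Gamma\la_\D\cdot\jump{\bv}=\sum_i\int_{\Gamma_i}\la_\D\cdot\bv_i\gtrsim\NORM{-\nicefrac12,\Gamma}{\la_\D}$, which is the crux of why the bubble enrichment works. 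For the denominator we need $\NORM{1,\D}{\IUD\bv}\lsim\NORM{H^1(\Omega\backslash\Gamma)}{\bv}=\sum_i1\lsim 1$; this is a boundedness (stability) estimate for the interpolator in the discrete $H^1$-like norm, combining standard approximation/trace estimates on each cell with control of the bubble contribution $h_K^{d-2}|(\IUD\bv)_{\Ksig}|^2$ by $\NORM{L^2(\sigma)}{\gamma^{\Ksig}\bv-\Pi^{\Ksig}(\IUD\bv)}^2 h_K^{d-2}|\sigma|^{-1}\lsim h_K\NORM{\nabla\bv}{L^2(\omega_K)}^2$ via a trace inequality plus the polynomial consistency of $\Pi^{\Ksig}$. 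Dividing numerator by denominator and taking the near-optimal $\bv_i$'s gives \eqref{eq:inf-sup}.

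The main obstacle is the first step: producing a single admissible test function $\bv$ from the family $(\bv_i)_i$ across a geometrically complex network with triple points, tips and corners. The zero-extensions of the $\bv_i$ overlap only on the set where fractures meet, which has zero $2$-capacity, so the extensions are compatible in $H^1$, but one must check carefully that the sum still lies in $H^1_0(\Omega\backslash\overline\Gamma)^d$ (no spurious jump appears along a $\Gamma_j$ other than $\Gamma_i$ — this is where the orientation of $\normal^+$ and the choice of $\Omega_i^+$ on a single prescribed side matter) and that the subsequent mollification to land in $\mathcal C^0_0$ does not destroy the exact equality of face-averages used in Step 2 (it suffices to mollify at a scale much finer than the mesh, or to argue by density that the interpolator-based estimates pass to the limit). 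A secondary, more technical point is the interpolator stability estimate near fracture tips and corners, where $\omega_K$ may meet several faces of $\faces_\Gamma$; this is handled by the mesh-regularity assumptions of \citesec{hho-book}{} which bound the number of such faces and guarantee the usual local trace and inverse inequalities. I would isolate the interpolator-stability bound as a separate lemma and the network-compatibility of the zero-extension as another, then assemble them as above.
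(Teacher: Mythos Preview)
Your central claim --- that ``a direct computation gives $\int_\sigma\jump{\IUD\bv}_\D=\int_\sigma\jump{\bv}$ for every $\sigma\in\faces_\Gamma$'' --- is false, and this is precisely the subtlety the paper is built around. From \eqref{eq:def.jump.sigma} and \eqref{eq:def.ID} one computes
\[
\int_\sigma\jump{\IUD\bv}_\sigma
=\int_\sigma\Pi^{\Ksig}(\IUD\bv)-\int_\sigma\Pi^{\Lsig}(\IUD\bv)+|\sigma|(\IUD\bv)_{\Ksig}
=\int_\sigma\gamma^{\Ksig}\bv-\int_\sigma\Pi^{\Lsig}(\IUD\bv),
\]
so the bubble DOF, which sits only on the \emph{positive} side, corrects the $K$-side to the exact trace but leaves the $L$-side as the nodal reconstruction $\Pi^{\Lsig}(\IUD\bv)$, not $\gamma^{\Lsig}\bv$. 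The identity you want would hold if there were bubbles on both sides; with a one-sided bubble it does not (the paper makes exactly this point in Remark~\ref{rem:jump.norm}). Your mollification step does not help here: even for a smooth $\bv$ the difference $\int_\sigma(\gamma^{\Lsig}\bv-\Pi^{\Lsig}(\IUD\bv))$ is a genuine $\mathcal O(h)$ error term that you cannot absorb, since you need an exact equality to pass from the discrete pairing to $\int_{\Gamma_i}\la_\D\cdot\bv_i$.

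The paper's fix is to force $\Pi^{\Lsig}$ of the interpolate to be \emph{exactly zero} on the negative side. This requires two changes to your plan. First, each $\Gamma_i$ is treated separately (no summation of the $\bv_i$'s; the $\Omega_i^+$ can overlap substantially, so your ``disjoint up to zero capacity'' assertion is not correct), and one builds a distinct $\bv_\D$ for each $i$ --- the finiteness of $|I|$ then gives \eqref{eq:inf-sup}. Second, the nodal interpolator $\IUD$ is replaced by a Cl\'ement-type averaged interpolator $\IUD^{i,\ave}$ adapted to $\Gamma_i$: its nodal values are weighted cell averages over patches $U_{\Ksi}$ lying on one side of $\Gamma_i$, and these patches are chosen so that, for $L$ on the negative side of $\Gamma_i$, $U_{\Lsi}$ is contained in the region where the zero-extension $\widetilde{\bv}_i$ vanishes. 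This simultaneously (a) makes the interpolator well-defined and $H^1$-stable on $H^1_0(\Omega\backslash\Gamma_i)^d$ without any mollification, and (b) yields $(\IUD^{i,\ave}\widetilde{\bv}_i)_{\Ls}=0$ for all relevant $s$, hence $\Pi^{\Lsig}\IUD^{i,\ave}\widetilde{\bv}_i=0$ and the desired identity $\int_\sigma\jump{\IUD^{i,\ave}\widetilde{\bv}_i}_\sigma=\int_\sigma\gamma^{\Ksig}\widetilde{\bv}_i$ on $\faces_{\Gamma_i}$ (and zero jump on $\faces_\Gamma\setminus\faces_{\Gamma_i}$). Your instinct to isolate interpolator stability as a lemma is right, but the interpolator itself has to be redesigned.
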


\subsubsection{Fracture-compatible averaged interpolator}

To prove the inf-sup condition, we need a Cl\'ement-like interpolator that takes into account the fracture network. Due to the design of the $\JSdual$-like norm, we will use this interpolator considering only one planar component at a time. In the following, we therefore fix $i\in I$ and, in the constructions below, the sets $\Ksi$ are considered only in respect to $\Gamma_i$. 
Thus, for all $K\in\cells$ and $s\in\nodes_K$, $\Ksi$ is the set of all cells $L\in\cells_s$ that lie on the same side of $\Gamma_i$ as $K$. In particular, $\Ks\subset\Ksi$ with equality whenever $s$ is internal to $\Gamma_i$.

For each $K\in\cells$ and $s\in\nodes_K$, we take an open set $U_{\Ksi}\subset \bigcup_{K\in \Ksi}K$ and a function $\varpi_{\Ksi}\in L^\infty(U_{\Ksi})$ such that
\begin{subequations}\label{eq:def.varpi}
\begin{align}
\label{eq:def.varpi.bounds}
&|U_{\Ksi}|\gtrsim \max_{K\in\Ksi}|K|\,,\quad |\varpi_{\Ksi}|\lsim 1\,,\\
\label{eq:def.varpi.integral}
&\frac{1}{|U_{\Ksi}|}\int_{U_{\Ksi}}\varpi_{\Ksi}=1\,,\quad \frac{1}{|U_{\Ksi}|}\int_{U_{\Ksi}}\mathbf{x}\varpi_{\Ksi}=\mathbf{x}_s.
\end{align}
\end{subequations}

 If $s\not\in\Gamma$, $U_{\Ksi}$ can be a taken as a ball centered at $s$, and $\varpi_{\Ksi}=1$. Figure \ref{fig:UKsi} illustrates possible choices for $U_{\Ksi}$ depending on the nature of $s$, and Appendix \ref{sec:existence.varpi} presents an explicit way to construct $(U_{\Ksi},\varpi_{\Ksi})$ in the generic case. 

\begin{figure}[h!]
\centering
\input{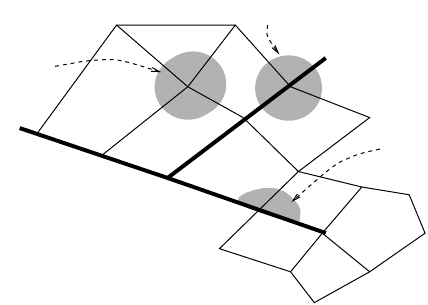_t}
\caption{Domains for local fracture-compatible averages.}
\label{fig:UKsi}
\end{figure}

The space $H^1_0(\Omega\backslash\Gamma_i)$ is the subspace of $H^1(\Omega\backslash\Gamma_i)$ spanned by functions that vanish on $\partial\Omega$, but not necessarily on $\Gamma_i$. For $\bv\in H^1_0(\Omega\backslash\Gamma_i)^d$, the averaged interpolate $\IUD^{i,\ave}\bv\in\UDz$ of $\bv$ is defined by
\begin{subequations}\label{eq:def.IDave}
\begin{alignat}{4}
  (\IUD^{i,\ave} \bv )_{\Ks}  ={}& 0&&\quad \mbox{ for all }s \in \nodes^{\text{ext}},\label{eq:IUDave.bdry}\\
	(\IUD^{i,\ave} \bv )_{\Ks}  ={}& \frac{1}{|U_{\Ksi}|}\int_{U_{\Ksi}} \varpi_{\Ksi}\bv&&\quad \mbox{ for all }K\in\cells\,,\;s \in \nodes_K\setminus\nodes^{\text{ext}},\label{eq:IUDave.node}\\
	\label{eq:IDiave.sigma.Gammai}
	(\IUD^{i,\ave} \bv )_{\Ksig} ={}& \frac{1}{|\sigma|} \int_{\sigma} ( \gamma^{\Ksig}{\bv} - \Pi^{\Ksig}(\IUD^{i,\ave}\bv) ) &&\quad \mbox{ for all }K \in \cells\,,\; \sigma \in \faces_{\Gamma_i,K}^{+},\\
	\label{eq:IDiave.sigma.notGammai}
	(\IUD^{i,\ave} \bv )_{\Ksig} ={}& 0 &&\quad \mbox{ for all }K \in \cells\,,\; \sigma \in \faces_{\Gamma,K}^+\setminus\faces_{\Gamma_i,K}^{+}.
\end{alignat}
\end{subequations}

\begin{remark}[Averaged interpolator]\label{rem:IDave}
The usage of averages in the definition of the nodal values is mandatory since we need well-defined and stable interpolations of functions with only $H^1$-regularity -- see Proposition \ref{prop:stability.IDave}. However, the averages must not be done across $\Gamma_i$ (hence the condition $U_{\Ksi}\subset \bigcup_{K\in \Ksi}K$), and must have $\mathbf{x}_s$ as center of mass (hence the integral conditions on $\varpi_{\Ksi}$), to ensure that the interpolator is exact on linear functions -- see the proof of Proposition \ref{prop:stability.IDave}.

If $s\in\Gamma$ is not in the relative interior of $\Gamma_i$, then for all $K,L\in\cells_s$ we have $\Ksi=\Lsi$ and thus $(\IUD^{i,\ave}\bv)_{\Ks}=(\IUD^{i,\ave}\bv)_{\Ls}$. In other words, this $\Gamma_i$-adapted interpolator $\IUD^{i,\ave}$ produces single nodal values on $\Gamma_j$ for $j\not=i$, and possibly multiple nodal values only on vertices in the interior of $\Gamma_i$. This is coherent with the continuity properties of functions in $H^1_0(\Omega\backslash\Gamma_i)$ that it interpolates.

Finally, we note that a zero value is imposed for boundary nodes, so that $\IUD^{i,\ave}\bv\in \UDz$ (computing boundary nodal values by averaging would not ensure that they vanish), and that the ``bubble'' value is set at zero for faces not on $\Gamma_i$. 
\end{remark}

The main properties of this interpolator are its behaviour with respect to the jump, and its $H^1$-stability.

\begin{proposition}[Jump of the averaged interpolate]
For all $\bv\in H^1_0(\Omega\backslash \Gamma_i)^d$, it holds
\begin{equation}\label{eq:trace.interpolate}
\forall K\in\cells\,,\;\forall\sigma\in\faces_K\,,\quad
\int_{\sigma} \jump{\IUD^{i,\ave}\bv}_\sigma=\left\{
\begin{array}{ll}
\int_\sigma (\gamma^{\Ksig}{\bv}-\Pi^{\Lsig}\IUD^{i,\ave}\bv)&\mbox{ if }\sigma\in \faces_{\Gamma_i,K}^{+},\\
0&\mbox{ if }\sigma\in \faces_{\Gamma,K}^+\setminus \faces_{\Gamma_i,K}^{+}.
\end{array}\right.
\end{equation}
\end{proposition}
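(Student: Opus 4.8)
The statement to prove is an identity for $\int_\sigma \jump{\IUD^{i,\ave}\bv}_\sigma$, so the natural approach is simply to unfold the definition \eqref{eq:def.jump.sigma} of the face jump operator applied to the averaged interpolate $\bv_\D = \IUD^{i,\ave}\bv$, and then plug in the specific nodal and bubble values of $\IUD^{i,\ave}\bv$ from \eqref{eq:def.IDave}. Recall that $\jump{\bv_\D}_\sigma$ is a constant on $\sigma$, so $\int_\sigma \jump{\bv_\D}_\sigma = |\sigma|\,\jump{\bv_\D}_\sigma$; alternatively, from the middle expression in \eqref{eq:def.jump.sigma} we have directly $\int_\sigma \jump{\bv_\D}_\sigma = \int_\sigma(\Pi^{\Ksig}\bv_\D - \Pi^{\Lsig}\bv_\D) + |\sigma|\,\bv_\Ksig$. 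I would work from this last form since it is the most directly connected to the two cases.

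**Case $\sigma \in \faces_{\Gamma_i,K}^+$.** Here $K$ is the positive-side cell and $L$ the negative-side cell of the fracture face $\sigma$. Using the identity above with $\bv_\D = \IUD^{i,\ave}\bv$, I get
\[
\int_\sigma \jump{\IUD^{i,\ave}\bv}_\sigma = \int_\sigma \Pi^{\Ksig}\IUD^{i,\ave}\bv - \int_\sigma \Pi^{\Lsig}\IUD^{i,\ave}\bv + |\sigma|\,(\IUD^{i,\ave}\bv)_{\Ksig}.
\]
Now substitute the definition \eqref{eq:IDiave.sigma.Gammai} of the bubble value: $|\sigma|\,(\IUD^{i,\ave}\bv)_{\Ksig} = \int_\sigma(\gamma^{\Ksig}\bv - \Pi^{\Ksig}\IUD^{i,\ave}\bv)$. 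The term $\int_\sigma \Pi^{\Ksig}\IUD^{i,\ave}\bv$ cancels, leaving exactly $\int_\sigma(\gamma^{\Ksig}\bv - \Pi^{\Lsig}\IUD^{i,\ave}\bv)$, which is the claimed right-hand side. This case is essentially an algebraic cancellation built into the definition of the bubble DOF.

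**Case $\sigma \in \faces_{\Gamma,K}^+ \setminus \faces_{\Gamma_i,K}^+$.** Here $\sigma$ lies on some $\Gamma_j$ with $j\ne i$ (positive side towards $K$), and \eqref{eq:IDiave.sigma.notGammai} gives $(\IUD^{i,\ave}\bv)_{\Ksig}=0$, so the jump reduces to $\int_\sigma(\Pi^{\Ksig}\IUD^{i,\ave}\bv - \Pi^{\Lsig}\IUD^{i,\ave}\bv)$. I would argue this vanishes by showing $\Pi^{\Ksig}\IUD^{i,\ave}\bv = \Pi^{\Lsig}\IUD^{i,\ave}\bv$ on $\sigma$. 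By \eqref{def:def.operators.sigma}, $\Pi^{\Ksig}\bv_\D$ depends only on the nodal values $\{\bv_{\Ks}: s\in\nodes_\sigma\}$ (through $\nabla^{\Ksig}$ and $\overline{\bv}_{\Ksig}$), and similarly $\Pi^{\Lsig}\bv_\D$ depends only on $\{\bv_{\Ls}: s\in\nodes_\sigma\}$. Since $\sigma\subset\Gamma_j$ with $j\ne i$, every vertex $s\in\nodes_\sigma$ is not in the relative interior of $\Gamma_i$, hence (as recalled in Remark \ref{rem:IDave}) $\Ksi=\Lsi$ and therefore $(\IUD^{i,\ave}\bv)_{\Ks}=(\IUD^{i,\ave}\bv)_{\Ls}$ for all such $s$ — including the boundary-node case, where both are zero. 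Thus the two tangential reconstructions coincide on $\sigma$ and their difference integrates to zero.

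**Main obstacle.** There is no serious obstacle — the proof is a direct computation — but the one point requiring care is the second case: one must be sure that \emph{all} vertices of $\sigma$ receive a single nodal value under $\IUD^{i,\ave}$, which relies on the geometric fact that a face lying on $\Gamma_j$ ($j\ne i$) has no vertex in the relative interior of $\Gamma_i$ (vertices may lie on intersections $\Gamma_i\cap\Gamma_j$, but those are on the relative boundary of $\Gamma_i$, not its interior). This is exactly the property flagged in Remark \ref{rem:IDave} and follows from the assumption that fractures intersect only along their boundaries; I would invoke it explicitly. One should also note in the first case that $\Pi^{\Ksig}$ and $\Pi^{\Lsig}$ of $\IUD^{i,\ave}\bv$ are well-defined without self-reference (same observation as after \eqref{eq:def.ID}), so the substitution of the bubble value is legitimate.
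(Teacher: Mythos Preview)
Your proposal is correct and follows essentially the same approach as the paper: both cases are handled by unfolding the jump definition \eqref{eq:def.jump.sigma}, using the bubble value \eqref{eq:IDiave.sigma.Gammai} to cancel in the first case, and in the second case noting that $(\IUD^{i,\ave}\bv)_{\Ksig}=0$ together with the fact that all vertices of $\sigma$ are not internal to $\Gamma_i$ so the nodal values (hence $\Pi^{\Ksig}$ and $\Pi^{\Lsig}$) match. Your write-up is simply more explicit than the paper's, and you correctly flag the geometric point underlying the second case.
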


\begin{proof}
If $\sigma\in\faces_{\Gamma_i,K}^{+}$, then the equality directly follows from the definition \eqref{eq:def.jump.sigma} of the jump on $\sigma$ and from \eqref{eq:IDiave.sigma.Gammai}.
If $\sigma\in \faces_{\Gamma,K}^+\setminus \faces_{\Gamma_i,K}^{+}$ then any $s\in\nodes_\sigma$ is not internal to $\Gamma_i$ and thus, as noted in Remark \ref{rem:IDave}, for all $K,L\in\cells_s$ we have $(\IUD^{i,\ave}\bv)_{\Ks}=(\IUD^{i,\ave}\bv)_{\Ls}$. Hence, $\Pi^{\Ksig}\IUD^{i,\ave}\bv=\Pi^{L_\sigma}\IUD^{i,\ave}\bv$. The definition \eqref{eq:def.jump.sigma} of the jump on $\sigma$, together with the fact that $(\IUD^{i,\ave}\bv)_{\Ksig}=0$ by \eqref{eq:IDiave.sigma.notGammai} yields the relation in \eqref{eq:trace.interpolate}.
\end{proof}

Establishing the $H^1$-stability of $\IUD^{i,\ave}$ requires the following lemma.

\begin{lemma}[DOF-based bound on the discrete norm]\label{lem:bound.DOF}
Let $K\in\cells$. Recalling the definition \eqref{eq:def.normD} of $\NORM{1,K}{{\cdot}}$, we have, for all $\bw_K=((\bw_\Ks)_{s\in\nodes_K},(\bw_\Ksig)_{\sigma\in\faces_{\Gamma,K}^+})$,
\begin{equation}\label{eq:bound.normK}
\NORM{1,K}{\bw_K}\lsim h_K^{-1}|K|^{1/2}\Big(\max_{s\in\nodes_K}|\bw_{\Ks}| + \max_{\sigma\in\faces_{\Gamma,K}^+}|\bw_{\Ksig}|\Big).
\end{equation}
\end{lemma}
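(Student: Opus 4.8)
The statement is a purely local estimate on a single cell $K$, comparing the discrete $\|\cdot\|_{1,K}$-norm of an arbitrary DOF vector $\bw_K$ to the maximum modulus of its components, scaled by $h_K^{-1}|K|^{1/2}$. By the definition \eqref{eq:def.normD}, $\NORM{1,K}{\bw_K}^2=\NORM{L^2(K)}{\nabla^K\bw_K}^2+S_K(\bw_K,\bw_K)$, so I would bound each of the two terms separately.

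\textbf{Step 1: the gradient term.} From \eqref{eq:def.nablaK}, $\nabla^K\bw_\D=\frac{1}{|K|}\sum_{\sigma\in\faces_K}|\sigma|\,\overline{\bw}_\Ksig\otimes\normal_\Ksig+\sum_{\sigma\in\faces_{\Gamma,K}^+}\frac{|\sigma|}{|K|}\bw_\Ksig\otimes\normal_\Ksig$, where $\overline{\bw}_\Ksig=\sum_{s\in\nodes_\sigma}\omega_s^\sigma\bw_\Ks$ is a convex combination of nodal values, hence $|\overline{\bw}_\Ksig|\le\max_{s\in\nodes_K}|\bw_\Ks|$. Since $\nabla^K\bw_\D$ is constant on $K$, $\NORM{L^2(K)}{\nabla^K\bw_\D}=|K|^{1/2}|\nabla^K\bw_\D|$, and the triangle inequality gives $|\nabla^K\bw_\D|\lesssim \frac{1}{|K|}\big(\sum_{\sigma\in\faces_K}|\sigma|\big)\big(\max_s|\bw_\Ks|+\max_\sigma|\bw_\Ksig|\big)$. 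Mesh regularity (\cite[Definition 1.9]{hho-book}) bounds the number of faces of $K$ uniformly and gives $|\sigma|\lesssim h_K^{d-1}\lesssim |K|/h_K$, so $\frac{1}{|K|}\sum_{\sigma\in\faces_K}|\sigma|\lesssim h_K^{-1}$; multiplying by the $|K|^{1/2}$ factor yields the claimed bound for the gradient term.

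\textbf{Step 2: the stabilisation term.} From \eqref{eq:def.SK},
$$
S_K(\bw_\D,\bw_\D)=h_K^{d-2}\sum_{s\in\nodes_K}|\bw_\Ks-\Pi^K\bw_\D(\mathbf{x}_s)|^2+h_K^{d-2}\sum_{\sigma\in\faces_{\Gamma,K}^+}|\bw_\Ksig|^2.
$$
The second sum is immediately $\lesssim h_K^{d-2}(\max_\sigma|\bw_\Ksig|)^2$ up to the (bounded) number of fracture faces of $K$. For the first sum, $\Pi^K\bw_\D(\mathbf{x}_s)=\nabla^K\bw_\D(\mathbf{x}_s-\overline{\mathbf{x}}_K)+\overline{\bw}_K$ with $|\mathbf{x}_s-\overline{\mathbf{x}}_K|\le h_K$ and $|\overline{\bw}_K|\le\max_s|\bw_\Ks|$, so using the Step 1 bound $|\nabla^K\bw_\D|\lesssim h_K^{-1}(\max_s|\bw_\Ks|+\max_\sigma|\bw_\Ksig|)$ we get $|\Pi^K\bw_\D(\mathbf{x}_s)|\lesssim \max_s|\bw_\Ks|+\max_\sigma|\bw_\Ksig|$, hence each summand $|\bw_\Ks-\Pi^K\bw_\D(\mathbf{x}_s)|^2\lesssim (\max_s|\bw_\Ks|+\max_\sigma|\bw_\Ksig|)^2$. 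Summing over the uniformly bounded number of nodes and using $h_K^{d-2}=h_K^{-2}h_K^d\lesssim h_K^{-2}|K|$ (again by mesh regularity, $|K|\gtrsim h_K^d$) gives $S_K(\bw_\D,\bw_\D)\lesssim h_K^{-2}|K|(\max_s|\bw_\Ks|+\max_\sigma|\bw_\Ksig|)^2$, which is the square of the right-hand side of \eqref{eq:bound.normK}.

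\textbf{Main obstacle.} There is no deep difficulty here; the proof is a bookkeeping exercise in the geometric scalings $|\sigma|\lesssim h_K^{d-1}$, $|K|\sim h_K^d$, $|\mathbf{x}_s-\overline{\mathbf{x}}_\sigma|,|\mathbf{x}_s-\overline{\mathbf{x}}_K|\le h_K$, and the uniform bound on the cardinalities $\sharp\nodes_K$, $\sharp\faces_K$, all of which are guaranteed by the mesh regularity assumption. The only point requiring a little care is that the estimate for the stabilisation term \emph{reuses} the bound on $|\nabla^K\bw_\D|$ from Step 1, so the two steps must be done in that order; and one should note that the implied constants depend only on $d$ and the mesh regularity parameter, consistently with the $\lesssim$ convention of the paper.
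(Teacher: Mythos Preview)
Your proof is correct and follows essentially the same approach as the paper: first bound $|\nabla^K\bw_\D|\lesssim h_K^{-1}(\max_s|\bw_\Ks|+\max_\sigma|\bw_\Ksig|)$ using $|\sigma|/|K|\lesssim h_K^{-1}$, deduce the $L^\infty$ bound on $\Pi^K\bw_\D$, and then control the stabilisation term via $h_K^{d-2}\lesssim h_K^{-2}|K|$ and the bounded cardinalities of $\nodes_K$ and $\faces_K$. The paper's proof is organised identically, only more tersely.
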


\begin{proof}
Set 
$$
|\bw_K|_{\infty,K}=\max_{s\in\nodes_K}|\bw_{\Ks}| + \max_{\sigma\in\faces_{\Gamma,K}^+}|\bw_{\Ksig}|.
$$
The definition \eqref{eq:def.nablaK} of $\nabla^K$ and the mesh regularity assumption (which ensures that $|\sigma|/|K|\lsim h_K^{-1}$) shows that
\begin{equation}\label{eq:bound.nablaK}
\NORM{L^\infty(K)}{\nabla^K\bw_K}\lsim h_K^{-1}|\bw_K|_{\infty,K}
\end{equation}
(remember that the weights $\omega_s^\sigma$ are all nonnegative and sum up to $1$). Plugged into \eqref{eq:def.PiK} this shows that
\begin{equation}\label{eq:bound.PiK}
\NORM{L^\infty(K)}{\Pi^K\bw_K}\lsim |\bw_K|_{\infty,K}.
\end{equation}
The bound \eqref{eq:bound.normK} follows by using \eqref{eq:bound.nablaK} and the Cauchy--Schwarz inequality to get $\NORM{L^2(K)}{\nabla^K\bw_D}\lsim h_K^{-1}|K|^{1/2}|\bw_K|_{\infty,K}$, and by using \eqref{eq:def.SK} and \eqref{eq:bound.PiK} in the definition \eqref{eq:def.normD} of $\NORM{1,K}{\bw_K}$, recalling that $h_K^{d-2}\lsim h_K^{-2}|K| $ and $\mathrm{Card}(\nodes_K)+\mathrm{Card}(\faces_K)\lsim 1$ by mesh regularity.
\end{proof}

\begin{proposition}[Stability of the averaged interpolator]\label{prop:stability.IDave}
It holds
\begin{equation}\label{eq:stability.interpolator}
\NORM{1,\D}{\IUD^{i,\ave}\bv}\lsim \NORM{L^2(\Omega)}{\nabla \bv}\qquad
\forall \bv\in H^1_0(\Omega\backslash \Gamma_i)^d.
\end{equation}
\end{proposition}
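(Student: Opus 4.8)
The plan is to bound $\NORM{1,\D}{\IUD^{i,\ave}\bv}^2=\sum_{K\in\cells}\NORM{1,K}{\IUD^{i,\ave}\bv}^2$ cell by cell, using Lemma \ref{lem:bound.DOF} to reduce the local discrete norm to the maximum of the (nodal and bubble) degrees of freedom of $\IUD^{i,\ave}\bv$ on $K$, and then estimating each such degree of freedom in terms of $\NORM{L^2(\omega_K)}{\nabla\bv}$ on a local neighbourhood $\omega_K$ of $K$. The final sum over $K$ is controlled by $\NORM{L^2(\Omega)}{\nabla\bv}^2$ thanks to the finite-overlap property of the neighbourhoods $(\omega_K)_{K\in\cells}$, guaranteed by mesh regularity. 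First I would treat the nodal DOFs: for $s\in\nodes_K\setminus\nodes^{\text{ext}}$, by \eqref{eq:IUDave.node} and \eqref{eq:def.varpi}, $(\IUD^{i,\ave}\bv)_{\Ks}$ is the $\varpi_{\Ksi}$-weighted average of $\bv$ over $U_{\Ksi}$; the key trick is that, because $\varpi_{\Ksi}$ has integral $1$ and center of mass $\mathbf{x}_s$ (condition \eqref{eq:def.varpi.integral}), this average reproduces affine functions, so for any constant vector $\mathbf c$ we may subtract the affine function $\mathbf{x}\mapsto \mathbf c\cdot(\mathbf{x}-\mathbf{x}_s)$ for free. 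Thus I would write, with $\overline{\bv}_{\Ksi}$ the plain mean of $\bv$ on $U_{\Ksi}$ and $\mathbf c$ chosen as a mean of $\nabla\bv$,
\[
|(\IUD^{i,\ave}\bv)_{\Ks}-\overline{\bv}_{\Ksi}|
=\Big|\frac{1}{|U_{\Ksi}|}\int_{U_{\Ksi}}\varpi_{\Ksi}\big(\bv(\mathbf x)-\overline{\bv}_{\Ksi}-\mathbf c\cdot(\mathbf x-\mathbf x_s)\big)\Big|
\lsim h_K^{1-d/2}\NORM{L^2(U_{\Ksi})}{\nabla\bv},
\]
using $|\varpi_{\Ksi}|\lsim1$, $|U_{\Ksi}|\gtrsim\max_{L\in\Ksi}|L|\gtrsim h_K^d$, Cauchy--Schwarz, and a Poincaré--Wirtinger inequality on $U_{\Ksi}$ (whose shape-regularity/diameter are controlled by mesh regularity; here one uses that $U_{\Ksi}$ is a bounded union of mesh cells of comparable size). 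For boundary nodes, \eqref{eq:IUDave.bdry} sets the value to $0$; since $\bv=0$ on $\partial\Omega$ one controls $|\overline{\bv}_{\Ksi}|$ itself by $h_K^{1-d/2}\NORM{L^2(U_{\Ksi})}{\nabla\bv}$ via a Poincaré inequality with boundary trace vanishing (this is where $H^1_0(\Omega\backslash\Gamma_i)$, vanishing on $\partial\Omega$ but not on $\Gamma_i$, is used). Combining, every nodal DOF satisfies $|(\IUD^{i,\ave}\bv)_{\Ks}|\lsim |\overline{\bv}_{\Ksi}|+h_K^{1-d/2}\NORM{L^2(U_{\Ksi})}{\nabla\bv}$, and differences of two such values sharing a common $U$ are $\lsim h_K^{1-d/2}\NORM{L^2(\cdot)}{\nabla\bv}$.

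Next I would treat the bubble DOFs. For $\sigma\in\faces_{\Gamma_i,K}^+$, \eqref{eq:IDiave.sigma.Gammai} gives $(\IUD^{i,\ave}\bv)_{\Ksig}=\frac1{|\sigma|}\int_\sigma(\gamma^{\Ksig}\bv-\Pi^{\Ksig}\IUD^{i,\ave}\bv)$. Writing $\Pi^{\Ksig}\IUD^{i,\ave}\bv$ via \eqref{def:def.operators.sigma} in terms of the nodal DOFs $((\IUD^{i,\ave}\bv)_{\Ks})_{s\in\nodes_\sigma}$ and $\nabla^{\Ksig}\IUD^{i,\ave}\bv$, I would exploit the fact that $\Pi^{\Ksig}$ is a projection-type operator that reproduces affine functions on $\sigma$ (its value is the $\overline{\mathbf x}_\sigma$-value plus $\nabla^{\Ksig}(\mathbf x-\overline{\mathbf x}_\sigma)$, and $\nabla^{\Ksig}$ applied to the interpolate of an affine map returns its gradient — this follows from the edge formula in \eqref{def:def.operators.sigma} and $\sum_e |e|\normal_{\sigma e}\otimes(\ldots)$ reproducing the divergence-type identity on the polygon $\sigma$). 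Hence one may again subtract an affine function: choosing it as the first-order Taylor expansion of $\bv$ (or rather, of its trace on a cell $K$ touching $\sigma$) around $\overline{\mathbf x}_\sigma$, the nodal contributions $(\IUD^{i,\ave}\bv)_{\Ks}-(\text{affine value at }\mathbf x_s)$ are already bounded by the nodal estimate above, and $\gamma^{\Ksig}\bv-(\text{affine trace})$ is bounded in $L^1(\sigma)$ by $h_K\NORM{L^2(\sigma)}{\nabla_\tau\gamma^{\Ksig}\bv}$ or, after a trace inequality from $K$, by $\NORM{L^2(K)}{\nabla\bv}+h_K^{-1}$-weighted lower-order terms that cancel against the affine choice. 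Carefully, what one obtains is $|(\IUD^{i,\ave}\bv)_{\Ksig}|\lsim h_K^{1-d/2}\NORM{L^2(\omega_\sigma)}{\nabla\bv}$ where $\omega_\sigma$ is the union of cells neighbouring $\sigma$ and the $U_{\Ksi}$ of its vertices (no zeroth-order term survives because the bubble value is precisely the \emph{correction} orthogonal to the affine part). For $\sigma\in\faces_{\Gamma,K}^+\setminus\faces_{\Gamma_i,K}^+$, \eqref{eq:IDiave.sigma.notGammai} makes the DOF zero.

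Finally I would assemble: by Lemma \ref{lem:bound.DOF},
\[
\NORM{1,K}{\IUD^{i,\ave}\bv}^2
\lsim h_K^{-2}|K|\,|\IUD^{i,\ave}\bv|_{\infty,K}^2
\lsim h_K^{-2}|K|\cdot h_K^{2-d}\NORM{L^2(\omega_K)}{\nabla\bv}^2
\lsim \NORM{L^2(\omega_K)}{\nabla\bv}^2,
\]
using $|K|\lsim h_K^d$. Summing over $K$ and invoking the bounded-overlap of $(\omega_K)_{K\in\cells}$ (each cell of $\Omega$ belongs to at most $C$ of the $\omega_K$'s, with $C$ depending only on mesh regularity, because the $\omega_K$ are unions of a bounded number of cells of comparable size) yields $\sum_K\NORM{L^2(\omega_K)}{\nabla\bv}^2\lsim\NORM{L^2(\Omega)}{\nabla\bv}^2$, which is \eqref{eq:stability.interpolator}. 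The main obstacle I anticipate is the bubble-DOF estimate: one must show that the affine-reproduction property of $\Pi^{\Ksig}$ combined with the affine-reproduction of the nodal averages lets \emph{all} zeroth-order contributions cancel, leaving only an $\NORM{L^2}{\nabla\bv}$-type bound with the correct $h_K$ power; getting the trace inequalities and the geometry of $\omega_\sigma$ right (in particular that $\omega_\sigma$ stays within the correct side of $\Gamma_i$, so that the $H^1_0(\Omega\backslash\Gamma_i)$-regularity of $\bv$ suffices and no jump across $\Gamma_i$ is inadvertently used) is the delicate point. A secondary technical point is establishing the Poincaré--Wirtinger constants on the possibly irregularly-shaped sets $U_{\Ksi}$ uniformly in the mesh, which relies on the explicit construction of $(U_{\Ksi},\varpi_{\Ksi})$ deferred to Appendix \ref{sec:existence.varpi}.
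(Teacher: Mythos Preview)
Your assembly step contains a genuine gap. You invoke Lemma \ref{lem:bound.DOF} directly on $\IUD^{i,\ave}\bv$ and then write
\[
\NORM{1,K}{\IUD^{i,\ave}\bv}^2
\lsim h_K^{-2}|K|\,|\IUD^{i,\ave}\bv|_{\infty,K}^2
\lsim h_K^{-2}|K|\cdot h_K^{2-d}\NORM{L^2(\omega_K)}{\nabla\bv}^2,
\]
which requires $|\IUD^{i,\ave}\bv|_{\infty,K}\lsim h_K^{1-d/2}\NORM{L^2(\omega_K)}{\nabla\bv}$. But you yourself established only $|(\IUD^{i,\ave}\bv)_{\Ks}|\lsim |\overline{\bv}_{\Ksi}|+h_K^{1-d/2}\NORM{L^2(U_{\Ksi})}{\nabla\bv}$ for interior nodes, and the zeroth-order term $|\overline{\bv}_{\Ksi}|$ is \emph{not} controlled by the local gradient norm (think of $\bv$ nearly constant and equal to $1$ in a region far from $\partial\Omega$). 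Lemma \ref{lem:bound.DOF} is lossy precisely on constants and affine functions: it is the wrong tool to apply to $\IUD^{i,\ave}\bv$ itself.

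The paper repairs this by introducing, for each $K$, a linear approximation $\bq$ of $\bv$ on the patch $\mathcal N(K)=\bigcup_{s\in\nodes_K}\bigcup_{L\in\Ksi}L$ (the $L^2$-projection onto $\Poly{1}$), and splitting
\[
\NORM{1,K}{\IUD^{i,\ave}\bv}\le \NORM{1,K}{\IUD\bq}+\NORM{1,K}{\IUD\bq-\IUD^{i,\ave}\bv}.
\]
The first term is handled \emph{not} through Lemma \ref{lem:bound.DOF} but by linear exactness: since $\bq$ is affine one has $\nabla^K\IUD\bq=\nabla\bq$, $(\IUD\bq)_{\Ksig}=0$, and $S_K(\IUD\bq,\cdot)=0$, so $\NORM{1,K}{\IUD\bq}=\NORM{L^2(K)}{\nabla\bq}\lsim\NORM{L^2(\mathcal N(K))}{\nabla\bv}$. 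Only the second term is treated via Lemma \ref{lem:bound.DOF}, and there the DOFs are genuinely small: the moment condition \eqref{eq:def.varpi.integral} gives $(\IUD\bq)_{\Ks}=\bq(\mathbf{x}_s)=\frac{1}{|U_{\Ksi}|}\int_{U_{\Ksi}}\varpi_{\Ksi}\bq$, so the nodal difference is the weighted average of $\bq-\bv$, bounded by $|K|^{-1/2}h_K\NORM{L^2(\mathcal N(K))}{\nabla\bv}$ using the approximation properties of $\bq$ on $\mathcal N(K)$ (no Poincar\'e--Wirtinger on the irregular sets $U_{\Ksi}$ is needed). The bubble DOF difference is then handled cleanly because $(\IUD\bq)_{\Ksig}=0$.

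In short: your idea of exploiting affine reproduction is right, but it must be implemented as a splitting at the level of $\NORM{1,K}{\cdot}$ (subtract $\IUD\bq$ \emph{before} invoking Lemma \ref{lem:bound.DOF}), not as a bound on individual DOFs followed by Lemma \ref{lem:bound.DOF} on the full interpolate.
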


\begin{proof}

Let $\bv\in H^1_0(\Omega\backslash \Gamma_i)^d$.
The proof is split in several steps. The idea is to first obtain a bound on a cell $K$, by locally approximating $\bv$ by a linear function $\bq$ and using the linear exactness of $\IUD^{i,\ave}$ resulting from the properties of $\varpi_\Ks$. This allows us to bound the interpolates of $\bq$ and of $\bv-\bq$, and to conclude.

\emph{Step 1: local linear approximation $\bq$}.

Let $K\in\cells$ and set 
$$
 \mathcal N(K)=\bigcup_{s\in\nodes_K}\bigcup_{L\in\Ksi}L
 $$
the patch around $K$ made of all the cells in $\Ksi$ for each $s$ vertex of $K$. By mesh regularity, it can be checked that $\mathcal N(K)$ is connected by star-shaped sets as per \cite[Definition 1.4]{hho-book}, and by definition of $\Ksi$ we have $\bv\in H^1(\mathcal N(K))^d$. As a consequence, the $L^2(\mathcal N(K))^d$-projection $\bq$ of $\bv$ on $\mathbb{P}^1(\mathcal N(K))^d$ enjoys the following approximation and stability properties:
\begin{align}
\label{eq:q.approx}
\NORM{L^2(\mathcal N(K))}{\bq-\bv}\lsim{}& h_K\NORM{L^2(\mathcal N(K))}{\nabla\bv},\\
\label{eq:q.approx.trace}
\NORM{L^2(\sigma)}{\bq-\bv}\lsim{}& h_K^{1/2}\NORM{L^2(\mathcal N(K))}{\nabla\bv}\quad\forall\sigma\in\faces_K,\\
\label{eq:q.norm}
\NORM{L^2(\mathcal N(K))}{\nabla\bq}\lsim{}& \NORM{L^2(\mathcal N(K))}{\nabla\bv}.
\end{align}
The relation \eqref{eq:q.approx} comes from \cite[Theorem 1.45, Eq.~(1.74)]{hho-book}, accounting for the fact that the diameter of $\mathcal N(K)$ is $\lsim h_K$ by mesh regularity; the trace approximation property \eqref{eq:q.approx.trace} can be established following the same arguments as for Eq.~(1.75) in this reference (which establishes the trace approximation property for $\sigma$ on the boundary of $\mathcal N(K)$); the boundedness \eqref{eq:q.norm} follows from \cite[Remark 1.47]{hho-book}.

\medskip

\emph{Step 2: bound on $\IUD\bq$}.

In the following, by abuse of notation we denote by $\IUD\bq$ the (non-averaged) local interpolate of $\bq$ on $K$, that is, the vector obtained gathering only the degrees of freedom \eqref{eq:def.ID} attached to the chosen cell $K$ (for $s\in\nodes_K$ and $\sigma\in\faces_{\Gamma,K}^+$). We also note that the DOFs lying on the boundary of $\Omega$ do not necessarily vanish for this local interpolate.

For all $s\in\nodes_K$ we have $(\IUD\bq)_{\Ks}=\bq(\mathbf{x}_s)$ and thus, by choice \eqref{eq:choice.weights.sigma} of the face weights and since $\bq$ is linear, it holds  $\nabla^{\Ksig}\IUD\bq=\nabla_\sigma\bq$ for all $\sigma\in\faces_K$ ($\nabla_\sigma$ being the tangential gradient), see \cite[Lemma 14.8]{gdm} for details. From \eqref{def:def.operators.sigma}, we easily infer 
\begin{equation}\label{eq:exact.linear.Ksig}
\Pi^\Ksig\IUD\bq=\bq_{|\sigma}.
\end{equation}
Plugging this into the second relation of \eqref{eq:def.ID} yields 
\begin{equation}\label{eq:IDq.Ksig}
(\IUD\bq)_\Ksig=0\quad\forall\sigma\in\faces_{\Gamma,K}^+.
\end{equation}

Recalling \eqref{eq:def.nablaK} and using again \cite[Lemma 14.8]{gdm} and the fact that $\bq$ is linear, we deduce that
\begin{equation}\label{eq:nablaK.ID.q}
  \nabla^K\IUD\bq=\nabla \bq.
\end{equation}
The definition \eqref{eq:def.PiK} of $\Pi^K$ then shows that $\Pi^K\IUD\bq=\bq$. Together with \eqref{eq:IDq.Ksig} and the definition \eqref{eq:def.SK} of $S_K$, this gives 
\begin{equation}\label{eq:linear.exact.SK}
S_K(\IUD\bq,\bw_\D)=0\quad\forall\bw_\D\in\UD.
\end{equation}
Finally, recalling \eqref{eq:nablaK.ID.q} and \eqref{eq:q.norm}, as well as the definition \eqref{eq:def.normD} of $\NORM{1,K}{{\cdot}}$, we obtain 
\begin{equation}\label{eq:norm.ID.q}
  \NORM{1,K}{\IUD\bq}\lsim \NORM{L^2(\mathcal N(K))}{\nabla\bv}.
\end{equation}

\medskip

\emph{Step 3: bound on $\IUD\bq-\IUD^{i,\ave}\bv$}.

We bound each type of degree of freedom (boundary vertex, internal vertex, fracture face), then use \eqref{eq:bound.normK} to conclude.

If $s\in\nodes_K^{\text{ext}}$ then $(\IUD^{i,\ave}\bv)_{\Ks}=0$ and $(\IUD\bq)_{\Ks}=\bq(\mathbf{x}_s)$. We can then take $\sigma\in\faces^{\text{ext}}$ that contains $s$ and write
\begin{align}
|(\IUD\bq)_{\Ks}-(\IUD^{i,\ave}\bv)_{\Ks}|\lsim|\sigma|^{-1/2}\NORM{L^2(\sigma)}{\bq}
\lsim{}& |\sigma|^{-1/2}h_K^{1/2}\NORM{L^2(\mathcal N(K))}{\nabla\bv}\nonumber\\
\lsim{}& |K|^{-1/2}h_K\NORM{L^2(\mathcal N(K))}{\nabla\bv},
\label{eq:bound.Ks.bdry}
\end{align}
where we have used the inverse Lebesgue inequality \cite[Lemma 1.25]{hho-book} for the first inequality, the bound \eqref{eq:q.approx.trace} together with $\bv_{|\sigma}=0$ (since $\sigma\subset\partial\Omega$) for the second inequality, and the mesh regularity assumption to conclude.

If $s\in\nodes_K^{\text{int}}$ then \eqref{eq:def.varpi.integral} and the fact that $\bq$ is linear ensure that 
$$
  (\IUD\bq)_{\Ks}=\bq(\mathbf{x}_s)=\frac{1}{|U_{\Ksi}|}\int_{U_\Ksi}\varpi_\Ksi\bq.
$$
Hence,
\begin{align}
|(\IUD\bq)_{\Ks}-(\IUD^{i,\ave}\bv)_{\Ks}|= \left|\frac{1}{|U_{\Ksi}|}\int_{U_\Ksi}\varpi_\Ksi(\bq-\mathbf{v})\right|
\lsim{}& |U_\Ksi|^{-1/2}\NORM{L^2(U_\Ksi)}{\bq-\mathbf{v}}\nonumber\\
\lsim{}& |K|^{-1/2} h_K \NORM{L^2(\mathcal N(K))}{\nabla\bv},
\label{eq:bound.Ks.int}
\end{align}
where the first inequality follows from the bound $|\varpi_\Ksi|\lsim 1$ and the Cauchy--Schwarz inequality, while the conclusion is obtained using $|U_{\Ksi}|\gtrsim |K|$ (see \eqref{eq:def.varpi.bounds}), $U_\Ksi\subset\mathcal N(K)$ and \eqref{eq:q.approx}.

Finally, we consider the face degrees of freedom. If $\sigma\in\faces_{\Gamma,K}^+\setminus \faces_{\Gamma_i,K}^{+}$ then by \eqref{eq:IDiave.sigma.notGammai} and \eqref{eq:IDq.Ksig} we have $(\IUD\bq)_{\Ksig}=(\IUD^{i,\ave}\bv)_{\Ksig}=0$. If $\sigma\in\faces_{\Gamma_i,K}^{+}$ then \eqref{eq:def.ID} and \eqref{eq:IDiave.sigma.Gammai} yield
\begin{equation}\label{eq:bound.Ksig.1}
(\IUD\bq)_{\Ksig}-(\IUD^{i,\ave}\bv)_{\Ksig}=\frac{1}{|\sigma|}\int_\sigma\gamma^{\Ksig}(\bq-\bv)-\Pi^\Ksig(\IUD\bq-\IUD^{i,\ave}\bv).
\end{equation}
By \eqref{eq:bound.Ks.bdry} and \eqref{eq:bound.Ks.int}, all vertex degrees of freedom of $\IUD\bq-\IUD^{i,\ave}\bv$ are $\mathcal O(|K|^{-1/2} h_K \NORM{L^2(\mathcal N(K))}{\nabla\bv})$. The operator $\Pi^\Ksig$ only depends on these degrees of freedom so, by the same arguments that led to \eqref{eq:bound.PiK}, we find that $\NORM{L^\infty(\sigma)}{\Pi^\Ksig(\IUD\bq_{|\sigma}-\IUD^{i,\ave}\bv)}\lsim |K|^{-1/2} h_K \NORM{L^2(\mathcal N(K))}{\nabla\bv}$. Plugging this into \eqref{eq:bound.Ksig.1} and using the Cauchy--Schwarz inequality, we infer that
\begin{align}
\left|(\IUD\bq)_{\Ksig}-(\IUD^{i,\ave}\bv)_{\Ksig}\right|\lsim{}&
  |\sigma|^{-1/2}\NORM{L^2(\sigma)}{\bq-\bv} + |K|^{-1/2} h_K \NORM{L^2(\mathcal N(K))}{\nabla\bv}\nonumber\\
  \lsim {}&|K|^{-1/2} h_K \NORM{L^2(\mathcal N(K))}{\nabla\bv},
\label{eq:bound.Ksig.2}
\end{align}
where the mesh regularity and \eqref{eq:q.approx.trace} were used to conclude.

Gathering \eqref{eq:bound.Ks.bdry}, \eqref{eq:bound.Ks.int} and \eqref{eq:bound.Ksig.2} and invoking the bound \eqref{eq:bound.normK} with $\bw_K$ gathering the degrees of freedom on $K$ of $\IUD\bq-\IUD^{i,\ave}\bv$ yields
\begin{equation}\label{eq:norm.IDq.IDavev}
\NORM{1,K}{\IUD\bq-\IUD^{i,\ave}\bv}\lsim \NORM{L^2(\mathcal N(K))}{\nabla\bv}.
\end{equation}

\medskip

\emph{Step 4: conclusion}.

Combining \eqref{eq:norm.ID.q} and \eqref{eq:norm.IDq.IDavev}, squaring, summing over $K\in\cells$ and gathering the integrals by cells we infer
$$
\NORM{1,\D}{\IUD^{i,\ave}\bv}^2\lsim \sum_{K\in\cells}\NORM{L^2(\mathcal N(K))}{\nabla\bv}^2
 =\sum_{K'\in\cells}\NORM{L^2(K')}{\nabla\bv}^2\mathrm{Card}\{K\in\cells\,:\,K'\in\mathcal N(K)\}.
$$
The proof of \eqref{eq:stability.interpolator} is complete by using the mesh regularity properties to see that $\mathrm{Card}\{K\in\cells\,:\,K'\in\mathcal N(K)\}\lsim 1$.
\end{proof}

\subsubsection{Proof of the inf-sup condition}

We are now ready to prove Theorem \ref{th:infsup}. Let $\la_\D\in \MD$. Recalling the definition \eqref{eq:def.H12.norm} of the $\JSdual$-like norm, the inf-sup condition \eqref{eq:inf-sup} holds provided we can prove that, for all $i\in I$ and all $\bv_i\in H^1_0(\Omega_i^+;\Gamma_i)^d$, there exists $\bv_\D\in\UDz$ such that
\begin{equation}\label{eq:inf-sup.vi}
\frac{\int_\Gamma \la_\D\cdot\jump{\bv_\D}_\D}{\NORM{1,\D}{\bv_\D}}\gtrsim 
  \frac{\int_{\Gamma_i} \la_\D \cdot\bv_i}{\NORM{H^1(\Omega_i^+)}{\bv_i}}.
\end{equation}

Since $\bv_i$ vanishes on $\partial\Omega_i^+\backslash\Gamma_i$, its extension $\widetilde{\bv}_i$ by $0$ to $\Omega$ belongs to $H^1_0(\Omega\backslash\Gamma_i)^d$ and satisfies $\NORM{H^1(\Omega\backslash\Gamma_i)}{\widetilde{\bv}_i}=\NORM{H^1_0(\Omega_i^+)}{\bv_i}$.
Let us consider $\bv_\D=\IUD^{i,\ave}\widetilde{\bv}_i$ with the interpolator defined by \eqref{eq:def.IDave}; by the construction done in Appendix \ref{sec:existence.varpi}, we can define this interpolator in such a way that, for all $s\in\overline{\Gamma_i}$ and $L$ on the negative side of $\Gamma_i$, $U_\Lsi$ is fully contained in $\Omega\backslash\Omega_i^+$ (since $U_\Lsi$ can be fully contained in any chosen cell that contains $s$). 
This ensures that $(\IUD^{i,\ave}\widetilde{\bv}_i)_\Lsi=0$ for all such $s$, and thus that $\Pi^\Lsig\IUD^{i,\ave}\widetilde{\bv}_i=0$ for all such $L$. Hence, by \eqref{eq:trace.interpolate} and since $\la_\D$ is piecewise constant on $\faces_\Gamma$,
$$
  \int_\Gamma \la_\D\cdot\jump{\bv_\D}_\D=
  \sum_{\sigma\in\faces_{\Gamma}}\la_\sigma\cdot\int_\sigma \jump{\IUD^{i,\ave}\widetilde{\bv}_i}_\sigma
  =
  \sum_{\sigma\in\faces_{\Gamma_i}}\la_\sigma\cdot\int_\sigma \gamma^\Ksig\widetilde{\bv}_i
  =
  \int_{\Gamma_i}\la_\D\cdot \bv_i,
$$
where the conclusion follows from the fact that $\gamma^\Ksig$ is the trace on the positive side, where $\widetilde{\bv}_i=\bv_i$.
Dividing throughout by $\NORM{1,\D}{\bv_\D}=\NORM{1,\D}{\IUD^{i,\ave}\widetilde{\bv}_i}\lsim \NORM{H^1(\Omega\backslash\Gamma_i)}{\widetilde{\bv}_i}=\NORM{H^1_0(\Omega_i^+)}{\bv_i}$ (see \eqref{eq:stability.interpolator}) we infer that \eqref{eq:inf-sup.vi} holds.

\begin{remark}[Jump-based norm on the fracture network]\label{rem:jump.norm}
An apparent simpler -- and perhaps more natural -- choice, that would prevent us from having to cut the fracture network into planar components $(\Gamma_i)_{i\in I}$, would be to define the $\JSdual$-norm using test functions $\bv\in H^1_0(\Omega\backslash\Gamma)^d$ and replacing their trace on the positive side of the fractures by their jump across the fractures. This choice is fully valid if we consider discretisations that have two bubble degrees of freedom, one on each side of the fracture, because in that case we have
$$
  \int_\sigma\jump{\IUD^{\ave}\bv}_\sigma=\int_\sigma\jump{\bv}\qquad\forall\bv\in H^1_0(\Omega\backslash\Gamma)^d.
$$
However, in the case of a single bubble degree of freedom located only on one side of the fracture (which has some practical interest in case of non-matching interfaces), the best relation that can be established between $\jump{\IUD^{\ave}\bv}_\sigma$ and $\bv$ is \eqref{eq:trace.interpolate} (dropping the index $i$ since we are discussing the option to handle the whole network at once), in which $\Pi^\Lsig\IUD^{\ave}\bv$ is not the trace of $\bv$ on the negative side, but some approximation thereof. As a consequence, when bubble degrees of freedom are only taken on the positive side of the network, we have to ensure that $\Pi^\Lsig\IUD^{\ave}\bv=0$ on the negative side; this is achieved in the proof above by working on each individual planar component of the network and using the zero extension outside $\Omega_i^+$ (which provides zero values on the negative side of $\Gamma$). For simple networks we might be able to create a $\bv$ that is zero on the whole negative side, but for complex network with multiple intersection lines, this does not look feasible, and explains why we resort to the decomposed norm \eqref{eq:def.H12.norm}.
\end{remark}

%\subsection{Inf-sup estimate in discrete norm}
%\begin{lemma}[Inf-sup condition in discrete norm]
%It holds
%$$
%\sup_{\bw_\D\in\UDz}\frac{\int_\Gamma \la_\D\cdot\jump{\bw_\D}}{\NORM{1,\D}{\bw_\D}}\gtrsim \NORM{-\nicefrac12,\D}{\la_\D}.
%$$
%\end{lemma}
%\begin{proof}
%Let us choose $\bv_\D \in \UDz$ such that $\bv_\Ks = 0$ for all $K \in \cells$, $s\in \nodes_K$ and $\bv_\Ksig = h_\sigma  \la_\sigma$ for all $K\in\cells$, $\sigma \in \faces_K^{\Gamma,+}$. Using Lemma \ref{lem:bound.DOF} and the mesh regularity property (which gives $|K|\lesssim h_K^d$ and $h_K^{d-2}h_\sigma\lesssim |\sigma|$) we easily infer
%\begin{equation}\label{eq:infsup.dis.1}
%\NORM{1,\D}{\bv_\D} \lesssim \(\sum_{\sigma=K|L\in \faces_\Gamma} h_K^{d-2}|\bv_\Ksig|^2 \)^{\nicefrac12} \lesssim \(\sum_{\sigma\in \faces_\Gamma} h_\sigma|\sigma| |\la_\sigma|^2 \)^{\nicefrac12} =\NORM{-\nicefrac12,\D}{\la_\D}. 
%\end{equation}
%Since all nodal values of $\bv_\D$ vanish, \eqref{eq:def.jump.sigma} gives $\jump{\bv_\D}_\sigma =  \bv_\Ksig = h_\sigma  \la_\sigma$, and we therefore have 
%\begin{equation}\label{eq:infsup.dis.2}
%\int_\Gamma \la_\D \cdot \jump{\bv_\D}_\D = \sum_{\sigma\in \faces_\Gamma} h_\sigma|\sigma| |\la_\sigma|^2=\NORM{-\nicefrac12,\D}{\la_\D}^2
%\end{equation}
%Combining \eqref{eq:infsup.dis.1} and \eqref{eq:infsup.dis.2} concludes the proof of the lemma.
%\end{proof}

\subsection{Discrete Korn inequality}

\begin{theorem}[Discrete Korn inequality]\label{th:korn}
	It holds
	\begin{equation}\label{eq:Korn}
		\NORM{1,\D}{\bv_\D}^2\lsim \NORM{L^2(\Omega\backslash \overline{\Gamma})}{\bbeps_\D(\bv_\D)}^2 + S_\D(\bv_\D,\bv_\D)\qquad\forall \bv_\D\in\UDz.
	\end{equation}
\end{theorem}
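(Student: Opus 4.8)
The natural strategy is to reduce the discrete Korn inequality to a \emph{continuous} Korn inequality on the broken domain $\Omega\setminus\overline\Gamma$ by associating to $\bv_\D\in\UDz$ a genuine $H^1$ vector field, and then to control the difference between that field's gradient and the discrete gradient $\nabla^\D\bv_\D$ using the stabilisation term $S_\D(\bv_\D,\bv_\D)$. First I would recall that a continuous Korn inequality holds on $\Omega\setminus\overline\Gamma=\bigcup_i\Omega_i^+\cup(\text{negative-side components})$: each connected component of $\Omega\setminus\overline\Gamma$ is a Lipschitz domain carrying a piece of the homogeneous Dirichlet boundary $\partial\Omega$ (this uses the assumption that $\Omega\setminus\Gamma$ is connected to $\partial\Omega$ through each component, or more precisely that every component touches $\partial\Omega$; otherwise one argues component by component with the rigid-motion-free Korn inequality and the fact that the trace/stabilisation pins down rigid motions), so that $\|\nabla\bw\|_{L^2}\lesssim\|\bbeps(\bw)\|_{L^2}$ for $\bw\in H^1_0$-type spaces on that component.

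The key construction is a \emph{conforming reconstruction} operator $\mathcal{R}_\D:\UDz\to H^1_0(\Omega\setminus\overline\Gamma)^d$, built by the standard averaging/oscillation trick: on each cell $K$ one uses the piecewise-linear reconstruction $\Pi^K\bv_\D$ (or a matching simplicial sub-triangulation interpolation of the nodal values), and one checks, via a standard oscillation estimate (Crouzeix--Raviart-type or the arguments in \cite{hho-book}), that
$$
\sum_{K\in\cells}\|\nabla(\mathcal{R}_\D\bv_\D-\Pi^K\bv_\D)\|_{L^2(K)}^2
\;\lesssim\;\sum_{K\in\cells}h_K^{-2}\|\Pi^K\bv_\D-\text{(nodal values)}\|^2
\;\lesssim\; S_\D(\bv_\D,\bv_\D),
$$
where the last bound comes from the very definition \eqref{eq:def.SK} of $S_K$ (the nodal discrepancy terms $\bv_\Ks-\Pi^K\bv_\D(\mathbf{x}_s)$ weighted by $h_K^{d-2}$, equivalently $h_K^{-2}|K|$, are exactly what controls the jumps across faces after averaging). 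Since $\Pi^K\bv_\D$ is affine, $\bbeps(\Pi^K\bv_\D)$ is constant on $K$ and equals $\bbeps_\D(\bv_\D)_{|K}$; similarly $\nabla\Pi^K\bv_\D=\nabla^K\bv_\D$. Hence $\|\bbeps(\mathcal R_\D\bv_\D)\|_{L^2}\lesssim\|\bbeps_\D(\bv_\D)\|_{L^2}+S_\D(\bv_\D,\bv_\D)^{1/2}$ and $\|\nabla^\D\bv_\D\|_{L^2}\lesssim\|\nabla\mathcal R_\D\bv_\D\|_{L^2}+S_\D(\bv_\D,\bv_\D)^{1/2}$.

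Then I would chain the estimates: apply the continuous Korn inequality to $\mathcal R_\D\bv_\D$ to get $\|\nabla\mathcal R_\D\bv_\D\|_{L^2}\lesssim\|\bbeps(\mathcal R_\D\bv_\D)\|_{L^2}\lesssim\|\bbeps_\D(\bv_\D)\|_{L^2}+S_\D(\bv_\D,\bv_\D)^{1/2}$, whence $\|\nabla^\D\bv_\D\|_{L^2}^2\lesssim\|\bbeps_\D(\bv_\D)\|_{L^2}^2+S_\D(\bv_\D,\bv_\D)$; adding $S_\D(\bv_\D,\bv_\D)$ to both sides and recalling the definition \eqref{eq:def.normD} of $\|{\cdot}\|_{1,\D}$ gives \eqref{eq:Korn}. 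The main obstacle is the construction and oscillation analysis of $\mathcal{R}_\D$ in the polytopal, fracture-conforming setting: one must average nodal values only within each side of $\Gamma$ (so that the reconstruction does jump across $\Gamma$, which is fine since the target space is $H^1$ on $\Omega\setminus\overline\Gamma$, not on $\Omega$), handle the bubble face DOFs $\bv_\Ksig$ (which contribute to $\nabla^K$ but are separately controlled by the second term of \eqref{eq:def.SK}), and verify that the averaged field lies in $H^1_0$ with the oscillation bound above — this is where the fracture network topology (tips, corners, triple points) demands care, mirroring the delicacy already encountered in the inf-sup proof. A secondary subtlety is the Dirichlet/rigid-motion bookkeeping needed so that the continuous Korn constant depends only on $\Omega$, $\Gamma$ and mesh regularity.
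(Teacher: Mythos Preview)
Your proposal is correct and essentially matches the paper's proof: the paper also builds a fracture-aware node-averaging operator $\mathcal{I}_{\text{av},h}^1$ (averaging $\Pi^L\bv_\D(\x_s)$ only over cells $L\in\Ks$ on the same side of $\Gamma$), invokes a Korn inequality on the resulting conforming field, and controls the residual by the stabilisation $S_\D$. The only difference is presentational --- the paper packages the conforming-reconstruction step as a citation to a broken Korn inequality \cite[Eq.~(7.66)]{hho-book} yielding jump penalties on $\faces\setminus\faces_\Gamma$, and then bounds those jumps by $S_\D$ via the nodal discrepancy terms in \eqref{eq:def.SK}, whereas you propose to bound the conforming approximation error directly by $S_\D$.
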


\begin{proof}
  We begin by substituting the quantities $\grad^{\D}{\bv_{\D}}$ in $\NORM{1,\D}{\bv_\D}$ and $\bbeps_{\D} (\bv_{\D})$ with their equivalents $\grad_\cells (\Pi^{\D} \bv_{\D})$ and $\bbeps_\cells(\Pi^{\D} \bv_{\D})$ (where $\grad_\cells$ and $\bbeps_\cells$ are the broken gradient and strain on $\cells$) to expose the function $\Pi^{\D} \bv_{\D} \in \mathbb{P}^1(\cells)^d$, and enable the usage of \cite[Lemma 7.23]{hho-book} with adjustments to account for fractures. 
  
  Specifically, the node-averaging operator $\mathcal{I}_{\text{av},h}^1$ from \cite[Section 7.3.2]{hho-book} can be adapted to avoid crossing fractures, by defining the node-averaged value at a given $s$ on the side $K$ of the fracture as follows 
  $$
  (\mathcal{I}_{\text{av},h}^1 \Pi^\D \bv_\D)_\Ks = {1\over \# \Ks}\sum_{L\in \Ks} \Pi^L\bv_\D(\x_s). 
  $$  
  It results that the jumps $(\mathcal{I}_{\text{av},h}^1 \Pi^\D \bv_\D)_\Ks -\Pi^L\bv_\D(\x_s)$ can be controlled using only non-fracture faces, instead of all mesh faces as in \cite[Section 7.3.2]{hho-book}. Applying the techniques there, we obtain the following modified version of \cite[Eq.~(7.66)]{hho-book}:
	\begin{equation}
		\dsp \NORM{L^2(\Omega\backslash \overline{\Gamma})}{\grad_\cells (\Pi^{\D} \bv_{\D})}^2 \lsim \NORM{L^2(\Omega\backslash \overline{\Gamma})}{\bbeps_\cells(\Pi^{\D} \bv_{\D}) }^2 + \sum_{\sigma \in \faces \setminus \faces_{\Gamma}} h_{\sigma}^{-1} \NORM{L^2(\sigma)}{\jump{\Pi^{\D} \bv_{\D}}_{| \sigma}}^2. \label{ineq_HHO}
	\end{equation} 	
	To conclude, we need to bound the jump terms in the right-hand side. 	
	We consider a face $K|L=\sigma \in \faces^{\text{int}} \setminus \faces_{\Gamma}$. For $s  \in \nodes_{\sigma}$, noticing that $\bv_{\mathcal{K} s} = \bv_{\mathcal{L} s}$, we write
	\begin{align}
		| \jump{\Pi^{\D} \bv_{\D}}_{|\sigma}(\mathbf{x}_s) |^2 ={}&
		| (\Pi^{K} \bv_{\D})(\mathbf{x}_s) - (\Pi^{L} \bv_{\D})(\mathbf{x}_s) |^2\nonumber\\
		\le{}&  2|\bv_{\mathcal{K} s} - (\Pi^K \bv_{\D}) (\mathbf{x}_s)|^2 + 2|\bv_{\mathcal{L} s} - (\Pi^L \bv_{\D}) (\mathbf{x}_s)|^2 \nonumber\\
		\lsim {}& h_K^{2-d}(S_K(\bv_{\D},\bv_{\D}) + S_L(\bv_{\D},\bv_{\D})), \label{saut_S_ineq}
	\end{align}
	where the conclusion follows from the definition \eqref{eq:def.SK} of the local stabilisation terms and the mesh regularity (which ensures $h_K\approx h_L$). Further, the fact that $\jump{\Pi^{\D} \bv_{\D}}_{| \sigma} \in \mathbb{P}^1(\sigma)^d$ along with \eqref{saut_S_ineq} yields
	\begin{align}
		\NORM{L^2(\sigma)}{\jump{\Pi^{\D} \bv_{\D}}_{| \sigma}}^2 \le{}&
		|\sigma|\NORM{\infty}{\jump{\Pi^{\D} \bv_{\D}}_{| \sigma}}^2 \lsim h_K^{d-1} \max_{s \in \nodes_{\sigma}}  |\jump{\Pi^{\D} \bv_{\D}}_{| \sigma} (\mathbf{x}_s)|^2  \nonumber\\
		\lsim {}& h_K (S_K(\bv_{\D},\bv_{\D}) + S_L(\bv_{\D},\bv_{\D})).
		\label{eq:est.korn.jump}
	\end{align}
	In the second inequality, we have used the fact that, by mesh regularity property, any $\x\in\sigma$ can be written $\x=\sum_{s\in\nodes_\sigma}\rho_s \x_s$ with $\sum_{s\in\nodes_\sigma}\rho_s=1$ and $\sum_{s\in\nodes_\sigma}|\rho_s|\lsim 1$, and that $\jump{\Pi^{\D} \bv_{\D}}(\x)=\sum_{s\in\nodes_\sigma}\rho_s\jump{\Pi^{\D} \bv_{\D}}(\x_s)$.
	We deduce from \eqref{eq:est.korn.jump} that 
	\begin{equation}
		h_{\sigma}^{-1} \NORM{L^2(\sigma)}{\jump{\Pi^{\D} \bv_{\D}}_{| \sigma}}^2  \lsim  S_K(\bv_{\D},\bv_{\D}) + S_L(\bv_{\D},\bv_{\D}).\label{eqa_1} 
	\end{equation}
		
	Consider now a face $\sigma \in \faces^{\text{ext}} \cap \faces_K$, $K \in \cells$. For $s \in \nodes_{\sigma}$, noticing that $\bv_{\mathcal{K} s} = 0$, we obtain
	$$
		| \jump{\Pi^{\D} \bv_{\D}}_{| \sigma}(\mathbf{x}_s) |^2 =
		| (\Pi^{K} \bv_{\D})(\mathbf{x}_s) - \bv_{\mathcal{K} s}  |^2 \le  h_K^{2-d}S_K(\bv_{\D},\bv_{\D}).
	$$
  Then, following the same arguments that lead to \eqref{eqa_1}, we get
	\begin{equation}
		h_{\sigma}^{-1} \NORM{L^2(\sigma)}{\jump{\Pi^{\D} \bv_{\D}}_{| \sigma}}^2  \lsim  S_K(\bv_{\D},\bv_{\D}).\label{eqa_2} 
	\end{equation}
	
	Summing \eqref{eqa_1} over the faces $\sigma \in  \faces^{\text{int}} \setminus \faces_{\Gamma}$ and \eqref{eqa_2} over the faces  $\sigma \in \faces^{\text{ext}}$,  we deduce
	\begin{equation}
		\sum_{\sigma \in  \faces \setminus \faces_{\Gamma}} 	h_{\sigma}^{-1} \NORM{L^2(\sigma)}{\jump{\Pi^{\D} \bv_{\D}}_{| \sigma}}^2  \lsim  \sum_{K \in \cells} S_K(\bv_{\D},\bv_{\D}).\label{ineq_saut_Pi_L2} 
	\end{equation}
	
	The proof of \eqref{eq:Korn} is concluded by noticing that
	$$ \NORM{1,\D}{\bv_\D}^2 =  \NORM{L^2(\Omega\backslash \overline{\Gamma})}{\grad_\cells (\Pi^{\D} \bv_{\D})}^2 +  \sum_{K \in \cells} S_K(\bv_{\D},\bv_{\D}),$$
 and by using \eqref{ineq_HHO} and \eqref{ineq_saut_Pi_L2}.	
\end{proof}

\subsection{Proof of the abstract error estimate}\label{sec:proof.abstract.error}

We prove here Theorem \ref{abstr.err.est}.
To shorten the notations, let us define the discrete energy inner product $\IPbracket{\cdot}{\cdot}_{e,\D}$ such that, for $\bu_{\D},\bv_{\D} \in \UD$,
\begin{equation*}
\IPbracket{\bu_{\D}}{\bv_{\D}}_{e,\D} = \int_{\Omega} \bbsigma_{\D} (\bu_{\D}): \bbeps_{\D}\left(\bv_{\D} \right)  + S_{\mu,\lambda,\D}\left( \bu_{\D}, \bv_{\D} \right), 	
\end{equation*}	
and denote by $\NORM{e,\D}{{\cdot}}$ its associated norm. From the discrete Korn inequality \eqref{eq:Korn}, we deduce the following bound for all $\bv_{\D} \in \UDz$:
\begin{equation}\label{relat.norme.discrete}
\NORM{1,\D}{\bv_{\D}} \lsim \NORM{e,\D}{\bv_{\D}}.
\end{equation}
  
Since $\bu$ is a weak solution to the contact problem and by the regularity assumed on $\la$ in the theorem, we have $-\div(\bbsigma(\bu))=\mathbf{f}\in L^2(\Omega)^d$ and $\la = -\gamma_\normal^+\bbsigma(\bu)\in L^2(\Gamma)^d$. Hence, $\bbsigma(\bu)\in\mathbf{W}$ defined by \eqref{def:esp.strain} and, using the definition \eqref{def:WD} of $w_\D$, we have for all $\bw_\D\in\UDz$
\begin{equation}\label{cons.dual_equ_1}
\int_{\Omega} \bbsigma(\bu): \bbeps_{\D}(\bw_{\D} ) -  \int_{\Omega}  \mathbf{f} \cdot \wt{\Pi}^{\D}\bw_{\D} =  -w_{\D}(\bbsigma(\bu),\bw_{\D}) - \int_{\Gamma} \la \cdot \jump{\bw_{\D}}_{\D}.
\end{equation}	 
Subtracting \eqref{eq:meca.var.tresca} (with $\bv_\D=\bw_\D$) from \eqref{cons.dual_equ_1}, we obtain
\begin{equation}\label{cons.dual.somme.var.formul}
	\int_{\Omega} (\bbsigma(\bu)-\bbsigma_{\D}(\bu_{\D})): \bbeps_{\D}(\bw_{\D} ) - S_{\mu,\lambda,\D}\left( \bu_{\D}, \bw_{\D} \right)   =  - w_{\D}(\bbsigma(\bu),\bw_{\D}) + \int_{\Gamma} (\la_{\D} - \la) \cdot \jump{\bw_{\D}}_{\D}.
\end{equation}	 
Take $\bv_{\D} \in \UDz$ and set $\bw_{\D} = \bv_{\D} - \bu_{\D}$ in \eqref{cons.dual.somme.var.formul} to get
\begin{align}\label{estima.1}
		 \NORM{e,\D}{\bv_{\D} - \bu_{\D}}^2    =  {}& -w_{\D}(\bbsigma(\bu),\bv_{\D}-\bu_{\D}) -\int_{\Omega} (\bbsigma(\bu)-\bbsigma_{\D}(\bv_{\D})): \bbeps_{\D}(\bv_{\D}-\bu_{\D})  \nonumber  \\
		 &+ S_{\mu,\lambda,\D}\left(\bv_{\D},\bv_{\D}-\bu_{\D} \right) 		 + \int_{\Gamma} (\la_{\D} - \la) \cdot ( \jump{\bv_{\D}}_{\D} - \jump{\bu}) \nonumber\\
		 &+  \int_{\Gamma} (\la_{\D} - \la) \cdot (\jump{\bu} - \jump{\bu_{\D}}_{\D} ).  
	\end{align}
Recalling that $\la_\D \in \CD\subset\bm{C}_f$, we deduce from \eqref{Lagrange_meca_contactfriction_2}  that $\int_{\Gamma}(\la_{\D} - \la)\cdot  \jump{\bu} \le 0$. Furthermore, as $\pi^{0}_{\faces_{\Gamma}} \la \in \CD$ (since $\trescacof$ is assumed to be piecewise constant on $\faces_\Gamma$), we obtain from the fact that $\jump{\bu_\D}_\D$ is piecewise constant on $\faces_\Gamma$ and  \eqref{eq:meca.ineq.contact.tresca} that 
$$  \int_{\Gamma}(\la - \la_{\D} )\cdot  \jump{\bu_{\D}}_{\D}  = \int_{\Gamma}(\pi^{0}_{\faces_{\Gamma}}\la  - \la_{\D} )\cdot  \jump{\bu_{\D}}_{\D} \le 0,$$
and consequently
$$
	\int_{\Gamma} (\la_{\D} - \la) \cdot (\jump{\bu} - \jump{\bu_{\D}}_{\D} ) \le 0.
$$
Plugging this relation into \eqref{estima.1}, using the norm estimate \eqref{relat.norme.discrete}, the definitions \eqref{def:CD} of $C_\D$ and \eqref{def:WD} of $\mathcal W_\D$, a Cauchy--Schwarz inequality on $S_{\mu,\lambda,\D}$ and Young's inequality,  we infer
	\begin{equation}\label{estima_1}
		\NORM{1,\D}{\bv_{\D} - \bu_{\D}}^2 \lsim  \mathcal{W}_{\D}(\bbsigma(\bu))^2 + C_{\D}(\bu,\bv_{\D})^2 +  \int_{\Gamma} (\la_{\D} - \la) \cdot ( \jump{\bv_{\D}}_{\D} - \jump{\bu}).
	\end{equation}
	
	We now return to \eqref{cons.dual.somme.var.formul}, which shows that, for all $\bw_{\D},\bv_{\D} \in \UDz$,
\begin{align}
\int_{\Gamma} (\la_{\D} - \la) \cdot \jump{\bw_{\D}}_{\D} = {}&  w_{\D}(\bbsigma(\bu),\bw_{\D}) +	\int_{\Omega} (\bbsigma(\bu)-\bbsigma_{\D}(\bu_{\D})): \bbeps_{\D}(\bw_{\D} )\nonumber\\
  &- S_{\mu,\lambda,\D}( \bv_{\D}, \bw_{\D}) - S_{\mu,\lambda,\D}( \bu_{\D} - \bv_{\D}, \bw_{\D}). \label{lamd.lam.w}
\end{align} 
Setting $\m_{\D} = \ProjFG \la$ and noticing that  $\int_{\Gamma} \la \cdot \jump{\bw_{\D}}_{\D} = \int_{\Gamma} \bm{\mu}_\D \cdot \jump{\bw_{\D}}_{\D}$ since $\jump{\bw_\D}_\D$ is piecewise constant on $\faces_\Gamma$, we deduce from \eqref{lamd.lam.w} that 
\begin{align*}
	\int_{\Gamma} (\la_{\D} - \m_{\D}) \cdot \jump{\bw_{\D}}_{\D} = {}&  w_{\D}(\bbsigma(\bu),\bw_{\D}) +	\int_{\Omega} (\bbsigma(\bu)-\bbsigma_{\D}(\bu_{\D})): \bbeps_{\D}(\bw_{\D} )\\
	&- S_{\mu,\lambda,\D}( \bv_{\D}, \bw_{\D}) - S_{\mu,\lambda,\D}( \bu_{\D} - \bv_{\D}, \bw_{\D}). \nonumber
\end{align*}
The discrete inf-sup condition \eqref{eq:inf-sup} then leads to
\begin{align*}
\NORM{-\nicefrac12,\Gamma}{\la_\D - \m_{\D}} \lsim & \sup_{\bw_\D\in\UDz}\frac{\int_\Gamma (\la_\D - \mu_{\D})\cdot\jump{\bw_\D}_\D}{\NORM{1,\D}{\bw_\D}} \\
\lsim {}&  \mathcal{W}_{\D}(\bbsigma(\bu)) + \NORM{L^2(\Omega\backslash\ov\Gamma)}{\bbsigma(\bu)-\bbsigma_{\D}(\bv_{\D})} + \NORM{L^2(\Omega\backslash\ov\Gamma)}{\bbsigma_{\D}(\bu_{\D} - \bv_{\D})}\\
  & + S_{\mu,\lambda,\D}( \bv_{\D},  \bv_{\D})^{1/2} +  S_{\mu,\lambda,\D}( \bu_{\D} - \bv_{\D}, \bu_{\D} - \bv_{\D})^{1/2},
\end{align*} 
from which we infer
\begin{equation}\label{lamd.lam.w.2}
	\NORM{-\nicefrac12,\Gamma}{\la_\D - \m_{\D}} \lsim  \NORM{1,\D}{\bu_{\D} - \bv_{\D}} +
	 \, \mathcal{W}_{\D}(\bbsigma(\bu)) + C_{\D}(\bu,\bv_{\D}). 
\end{equation} 
Combining \eqref{estima_1} and \eqref{lamd.lam.w.2} yields
\begin{align}
\NORM{1,\D}{\bu_\D - \bv_{\D}}^2 +	\NORM{-\nicefrac12,\Gamma}{\la_\D - \m_{\D}}^2 \lsim {}& \mathcal{W}_{\D}(\bbsigma(\bu))^2 + C_{\D}(\bu,\bv_{\D})^2\nonumber\\
& + \int_{\Gamma} (\la_{\D} - \la) \cdot ( \jump{\bv_{\D}}_{\D} - \jump{\bu}).
\label{eq:est.error.a}
\end{align}
Let us introduce the discrete $\Hhalfdual$-norm, dual of the discrete $\Hhalf$-norm \eqref{def:norm.1/2D} and  defined for all $\m \in L^2(\Gamma)^d$ by
\begin{equation}\label{def:norm.m1/2D}
	\NORM{-\nicefrac 12,\D}{\m} = \(\sum_{\sigma \in \faces_{\Gamma}} h_{\sigma} \NORM{L^2(\sigma)}{\m}^2\)^{1/2}. 
\end{equation}
Using a weighted Cauchy--Schwarz inequality, we note that 
\begin{equation}\label{eq:CS.Hhalf.discrete}
\int_\Gamma \bm{\mu}\cdot\bm{\xi}\le \NORM{-\nicefrac 12,\D}{\bm{\mu}}\NORM{\nicefrac 12,\D}{\bm{\xi}}\quad\forall \bm{\mu},\bm{\xi}\in L^2(\Gamma)^d.
\end{equation}
Then, the contact term in the right-hand side of \eqref{eq:est.error.a} can be estimated by introducing $\bm{\mu}_\D=\ProjFG\la$ and writing
\begin{align*}
 \int_{\Gamma} (\la_{\D} - \la) \cdot ( \jump{\bv_{\D}}_{\D} - \jump{\bu}) =  {}&  \int_{\Gamma} (\la_{\D} - \m_{\D}) \cdot ( \jump{\bv_{\D}}_{\D} - \ProjFG\jump{\bu}) +  \int_{\Gamma} (\m_{\D} - \la) \cdot ( \jump{\bv_{\D}}_{\D} - \jump{\bu}) \\
 \le {}&\frac{c}{2}\NORM{-\nicefrac 12,\D}{\la_{\D} - \m_{\D}}^2 + \frac{1}{2c} \NORM{\nicefrac 12,\D}{\jump{\bv_{\D}}_{\D} - \ProjFG\jump{\bu}}^2 \\
 & +\NORM{L^2(\Gamma)}{\mu_{\D} - \la}\, \NORM{L^2(\Gamma)}{\jump{\bv_{\D}}_{\D} - \jump{\bu}},
\end{align*}
where the introduction of the projector $\ProjFG$ in front of $\jump{\bu}$ in the first line is justified by the fact that $\la_{\D} - \m_{\D}$ is piecewise constant on $\faces_\Gamma$, and the conclusion follows from Cauchy--Schwarz inequalities (including \eqref{eq:CS.Hhalf.discrete}) and a Young inequality (with an arbitrary $c>0$).
We then apply Lemma \ref{lemma.disc.cont.norm1/2} in the appendix to write $\NORM{-\nicefrac 12,\D}{\la_{\D} - \m_{\D}}^2\lsim \NORM{-\nicefrac12,\Gamma}{\la_{\D} - \m_{\D}}^2$ and plug the resulting inequality in \eqref{eq:est.error.a}. Adjusting $c$ to absorb $\NORM{-\nicefrac12,\Gamma}{\la_{\D} - \m_{\D}}^2$ in the left-hand side, we infer
\begin{equation}\label{est_square}
\begin{aligned}  
\NORM{1,\D}{\bu_\D - \bv_{\D}}^2 +{}&	\NORM{-\nicefrac12,\Gamma}{\la_\D - \m_{\D}}^2 \lsim  \NORM{L^2(\Gamma)}{ \m_{\D}-\la}\NORM{L^2(\Gamma)}{\jump{\bv_{\D}}_{\D} - \jump{\bu}} \\
& + \NORM{\nicefrac 12,\D}{\jump{\bv_{\D}}_{\D} - \ProjFG\jump{\bu}}^2  +  \mathcal{W}_{\D}(\bbsigma(\bu))^2 + C_{\D}(\bu,\bv_{\D})^2.
\end{aligned}
\end{equation}
Applying triangular inequalities we can write
\begin{align}
   {}&\NORM{L^2(\Omega\backslash\ov\Gamma)}{\grad^{\D}\bu_\D - \grad \bu}+	\NORM{-\nicefrac 12,\Gamma}{\la_\D - \la} \nonumber\\
   {}&\lsim\NORM{L^2(\Omega\backslash\ov\Gamma)}{\grad^{\D}\bu_\D - \grad^\D \bv_\D}   + \NORM{-\nicefrac12,\Gamma}{\la_\D - \m_{\D}}  + \NORM{L^2(\Omega\backslash\ov\Gamma)}{\grad^{\D}\bv_\D - \grad \bu} + \NORM{-\nicefrac 12,\Gamma}{\m_\D - \la}\nonumber\\
   {}&\lsim  \NORM{1,\D}{\bu_\D - \bv_{\D}} + \NORM{-\nicefrac12,\Gamma}{\la_\D - \m_{\D}}  + C_{\D}(\bu,\bv_{\D}) + \NORM{-\nicefrac 12,\Gamma}{\m_\D - \la}.   
\label{eq:abstract.est.fin}
\end{align}
Taking the square root of \eqref{est_square}, plugging the resulting  estimate into \eqref{eq:abstract.est.fin}, recalling that $\bm{\mu}_\D=\ProjFG\la$, then taking the infimum over $\bv_\D$ concludes the proof of \eqref{estimat_error}.

\subsection{Proof of the error estimate}\label{sec:proof.error}

Theorem \ref{error_estimate} directly follows from the abstract error estimate \eqref{estimat_error}, Lemma \ref{lemma:approx.proj.Gamma} (with $s=1$) in the appendix, and Lemmas \ref{lem:consistency.nabla}, \ref{lem:consistency.jump} and \ref{lem:adjoint.consistency} below.

\begin{lemma}[Consistency of the gradient reconstruction]\label{lem:consistency.nabla}
If $\bu\in \U_0\cap H^2(\cells)^d$ then, recalling the definition \eqref{def:CD} of $C_\D$, it holds
\begin{equation}\label{eq:interp.u.H2}
C_\D(\bu,\IUD\bu)\lesssim h_\D\SEMINORM{H^2(\cells)}{\bu}.
\end{equation}
\end{lemma}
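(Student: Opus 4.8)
The plan is to bound separately the two contributions to $C_\D(\bu,\IUD\bu)^2=\NORM{L^2(\Omega\backslash\ov\Gamma)}{\grad\bu-\grad^\D\IUD\bu}^2+S_\D(\IUD\bu,\IUD\bu)$, working cell by cell. First note that, since $\bu\in\U_0\cap H^2(\cells)^d$ and (implicitly) $d\le 3$, $\bu$ is continuous on $\ov\Omega\backslash\Gamma$ and vanishes on $\partial\Omega$, so $\IUD\bu\in\UDz$ is well defined and its restriction to the degrees of freedom attached to a cell $K$ coincides with the local interpolate of $\bu_{|K}$ through \eqref{eq:def.ID} (for boundary nodes this uses that $\bu_{|K}(\mathbf{x}_s)=0$). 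On each $K\in\cells$ I would introduce a degree-one polynomial approximant $\bq_K\in\Poly{1}(K)^d$ of $\bu_{|K}$, e.g.\ an averaged Taylor polynomial, satisfying the standard approximation estimates (see, e.g., \cite[Theorem~1.45]{hho-book} and a continuous trace inequality)
\begin{equation*}
\NORM{L^2(K)}{\bu-\bq_K}+h_K\SEMINORM{H^1(K)}{\bu-\bq_K}\lsim h_K^2\SEMINORM{H^2(K)}{\bu}\,,\quad \NORM{L^2(\sigma)}{\bu-\bq_K}\lsim h_K^{3/2}\SEMINORM{H^2(K)}{\bu}\;(\sigma\in\faces_K)\,,
\end{equation*}
together with the scaled Sobolev bound $\NORM{L^\infty(K)}{\bu-\bq_K}\lsim h_K^{2-\frac d2}\SEMINORM{H^2(K)}{\bu}$, valid for $d\le 3$. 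The crucial algebraic fact I would exploit is that, as established in the proof of Proposition \ref{prop:stability.IDave}, the local reconstructions are exact on $\Poly{1}(K)^d$: $\nabla^K\IUD\bq_K=\nabla\bq_K$ (see \eqref{eq:nablaK.ID.q}), $\Pi^K\IUD\bq_K=\bq_K$, and $\Pi^\Ksig\IUD\bq_K=(\bq_K)_{|\sigma}$ for $\sigma\in\faces_K$ (see \eqref{eq:exact.linear.Ksig}). Combined with the linearity of the local interpolation map, these let us replace $\bu$ by the small remainder $\bw_K:=\bu_{|K}-\bq_K$ in all the local quantities attached to $K$.

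\emph{Gradient part.} On each $K$ I would write $\grad\bu-\nabla^K\IUD\bu=\grad\bw_K-\nabla^K\IUD\bw_K$; the $L^2(K)$-norm of the first term is $\lsim h_K\SEMINORM{H^2(K)}{\bu}$. For the second, I would bound the degrees of freedom of $\IUD\bw_K$ attached to $K$: the nodal values by $\NORM{L^\infty(K)}{\bw_K}$, and the bubble values $(\IUD\bw_K)_\Ksig=\tfrac1{|\sigma|}\int_\sigma(\gamma^\Ksig\bw_K-\Pi^\Ksig\IUD\bw_K)$ by combining $|\sigma|^{-1/2}\NORM{L^2(\sigma)}{\bw_K}$ with an $L^\infty(\sigma)$-bound on $\Pi^\Ksig\IUD\bw_K$ in terms of the nodal values (argued exactly as in the proof of Proposition \ref{prop:stability.IDave}). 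All these quantities are $\lsim h_K^{2-\frac d2}\SEMINORM{H^2(K)}{\bu}$, so \eqref{eq:bound.nablaK} and $\NORM{L^2(K)}{\cdot}\le|K|^{1/2}\NORM{L^\infty(K)}{\cdot}$ yield $\NORM{L^2(K)}{\nabla^K\IUD\bw_K}\lsim h_K^{\frac d2}h_K^{-1}h_K^{2-\frac d2}\SEMINORM{H^2(K)}{\bu}=h_K\SEMINORM{H^2(K)}{\bu}$. Squaring and summing over $K\in\cells$ bounds the gradient part by $\lsim h_\D\SEMINORM{H^2(\cells)}{\bu}$.

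\emph{Stabilisation part.} Using $\Pi^K\IUD\bq_K=\bq_K$ and $\Pi^\Ksig\IUD\bq_K=(\bq_K)_{|\sigma}$, the two groups of terms in \eqref{eq:def.SK} evaluated at $\bu_\D=\IUD\bu$ become, respectively, $\bw_K(\mathbf{x}_s)-\Pi^K\IUD\bw_K(\mathbf{x}_s)$, bounded by $\NORM{L^\infty(K)}{\bw_K}+\NORM{L^\infty(K)}{\Pi^K\IUD\bw_K}\lsim h_K^{2-\frac d2}\SEMINORM{H^2(K)}{\bu}$ via \eqref{eq:bound.PiK}, and $\tfrac1{|\sigma|}\int_\sigma(\gamma^\Ksig\bw_K-\Pi^\Ksig\IUD\bw_K)$, bounded as in the gradient step by $h_K^{2-\frac d2}\SEMINORM{H^2(K)}{\bu}$. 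Since $\mathrm{Card}(\nodes_K)+\mathrm{Card}(\faces_{\Gamma,K}^+)\lsim 1$ and $h_K^{d-2}\big(h_K^{2-\frac d2}\big)^2=h_K^{d-2}h_K^{4-d}=h_K^2$, we get $S_K(\IUD\bu,\IUD\bu)\lsim h_K^2\SEMINORM{H^2(K)}{\bu}^2$, hence $S_\D(\IUD\bu,\IUD\bu)\lsim h_\D^2\SEMINORM{H^2(\cells)}{\bu}^2$. Adding the two estimates gives \eqref{eq:interp.u.H2}.

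This is a fairly routine interpolation-error computation, so there is no deep obstacle; the points requiring care are the bookkeeping of $h_K$-powers — in particular checking that the $h_K^{d-2}$ scaling in $S_K$ exactly cancels the $h_K^{4-d}$ arising from the squared $L^\infty$/trace bounds on the $H^2$-remainder, so that one ends up with the clean $h_K^2$ — and the fact that one must use cell-local approximants $\bq_K$ rather than a single approximant on a patch (as in Proposition \ref{prop:stability.IDave}), since here $\bu$ is only piecewise $H^2$. The continuity of $\bu$ required for nodal interpolation is a minor restriction ($d\le 3$).
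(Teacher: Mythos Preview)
Your proposal is correct and follows essentially the same approach as the paper: introduce a local $\Poly{1}(K)^d$ approximant $\bq_K$, use the $L^\infty$ bound $\NORM{L^\infty(K)}{\bu-\bq_K}\lsim |K|^{-1/2}h_K^2\SEMINORM{H^2(K)}{\bu}$ (equivalently $h_K^{2-d/2}$) to control all DOFs of $\IUD(\bu-\bq_K)$, and invoke the linear exactness \eqref{eq:nablaK.ID.q}, \eqref{eq:exact.linear.Ksig}, \eqref{eq:linear.exact.SK}. The only cosmetic difference is that the paper bundles the gradient and stabilisation contributions together via the DOF-to-norm bound \eqref{eq:bound.normK} of Lemma~\ref{lem:bound.DOF} (applied to $\IUD(\bu-\bq_K)$), whereas you treat them separately; the bookkeeping of $h_K$-powers you carry out is exactly what that lemma encapsulates.
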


\begin{proof}
We first note that the regularity assumption ensures that $\bu\in C^0_0(\overline{\Omega}\backslash\Gamma)$: the $H^2$ regularity ensures the continuity of $\bu$ on each $\overline{K}$, and the values on each side of each $\sigma\not\in\faces_\Gamma$ match by the $H^1$ regularity. Hence, $\IUD\bu$ is well-defined.

Let $K\in\cells$ and $\q$ be the $L^2(K)$-orthogonal projection of $\bu$ on $\Poly{1}(K)^d$. By the approximation properties of the polynomial projector \cite[Theorem 1.45]{hho-book}, we have
\begin{equation}\label{eq:approx.poly}
\SEMINORM{H^s(K)}{\bu-\q}\lesssim h_K^{2-s}\SEMINORM{H^2(K)}{\bu}\,,\quad \forall s\in\{0,1,2\}.
\end{equation}
Applying the bound \cite[Eq.~(5.110)]{hho-book} to $\bu-\q$ yields
\begin{align}
\max_{\overline{K}}|\bu-\q|\lesssim{}& |K|^{-1/2}\(\NORM{L^2(K)}{\bu-\q}+h_K\SEMINORM{H^1(K)}{\bu-\q}+h_K^2\SEMINORM{H^2(K)}{\bu-\q}\)\nonumber\\
\lesssim{}& |K|^{-1/2}h_K^2\SEMINORM{H^2(K)}{\bu}.
\label{eq:approx.poly.sup}
\end{align}
where the conclusion follows from \eqref{eq:approx.poly}.
Plugging this estimate into the bound \eqref{eq:bound.normK} for $\bw_K$ gathering the DOFs on $K$ of $\IUD(\bu-\q)$ and recalling the definition \eqref{eq:def.normD} of the local norm, we infer
$$
\(\NORM{L^2(K)}{\nabla^K\IUD(\bu-\q)}^2+S_K(\IUD\bu-\IUD\q,\IUD\bu-\IUD\q)\)^{1/2}\lesssim h_K\SEMINORM{H^2(K)}{\bu}.
$$
The linear exactness properties \eqref{eq:nablaK.ID.q} and \eqref{eq:linear.exact.SK} allow us to manipulate the left-hand side to obtain
$$
\(\NORM{L^2(K)}{\nabla^K\IUD\bu-\nabla \q}^2+S_K(\IUD\bu,\IUD\bu)\)^{1/2}\lesssim h_K\SEMINORM{H^2(K)}{\bu}.
$$
Recalling the definition \eqref{def:CD} of $C_\D(\bu,\IUD\bu)$, the estimate \eqref{eq:interp.u.H2} follows by introducing $\pm\nabla \bu$ in the left-hand side above, using a triangle inequality, invoking again the approximation property \eqref{eq:approx.poly} (with $s=1$), squaring, summing over $K$, using the bound $h_K\le h_\D$ and taking the square root. \end{proof}

\begin{lemma}[Consistency of the jump reconstructions]\label{lem:consistency.jump}
If $\bu\in H^2(\cells)^d$ then
\begin{equation}\label{eq:approx.jump.L2}
\NORM{L^2(\Gamma)}{\jump{\IUD\bu}_{\D} - \jump{\bu}} \lsim h_\D \left(\SEMINORM{H^1(\faces_\Gamma)}{\jump{\bu}} 
+h_\D^{1/2}\SEMINORM{H^2(\cells)}{\bu}\right)
\end{equation}
and
\begin{equation}\label{eq:approx.jump.H12}
\NORM{1/2,\D}{\jump{\IUD\bu}_{\D} - \ProjFG\jump{\bu}} \lsim h_\D \SEMINORM{H^{2}(\cells)}{\bu}
\end{equation}
\end{lemma}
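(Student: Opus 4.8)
The plan is to reduce both estimates to the control of a single scalar residual attached to each fracture face, and then bound that residual by a local affine‑approximation argument, exactly in the spirit of Lemma \ref{lem:consistency.nabla}.

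First I would unwind the definitions on a fixed fracture face $\sigma=K|L\in\faces_\Gamma$, with $K$ on the positive and $L$ on the negative side. Substituting the definition \eqref{eq:def.ID} of $(\IUD\bu)_\Ksig$ into the jump formula \eqref{eq:def.jump.sigma} makes the two $\Pi^\Ksig(\IUD\bu)$–contributions cancel, leaving
\begin{equation*}
\jump{\IUD\bu}_\sigma=\frac{1}{|\sigma|}\int_\sigma\gamma^\Ksig\bu-\frac{1}{|\sigma|}\int_\sigma\Pi^\Lsig(\IUD\bu)=\pi^0_\sigma(\gamma^\Ksig\bu)-\textstyle\sum_{s\in\nodes_\sigma}\omega_s^\sigma(\IUD\bu)_{\Ls},
\end{equation*}
where $\pi^0_\sigma$ is the average over $\sigma$ and the affine part of $\Pi^\Lsig$ has zero mean on $\sigma$ because $\overline{\mathbf{x}}_\sigma$ is the centroid. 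Using $\jump{\bu}_{|\sigma}=\gamma^\Ksig\bu-\gamma^\Lsig\bu$ and regrouping yields
\begin{equation*}
\jump{\IUD\bu}_\sigma-\jump{\bu}_{|\sigma}=\big(\pi^0_\sigma\jump{\bu}-\jump{\bu}\big)+R_\sigma,\qquad R_\sigma:=\pi^0_\sigma(\gamma^\Lsig\bu)-\textstyle\sum_{s\in\nodes_\sigma}\omega_s^\sigma(\IUD\bu)_{\Ls},
\end{equation*}
and, applying $\pi^0_\sigma$, the companion identity $\jump{\IUD\bu}_\sigma-\ProjFG\jump{\bu}_{|\sigma}=R_\sigma$. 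Thus $R_\sigma$ alone governs the $\Hhalf$‑type error, while the $L^2$‑error additionally carries the cheap Poincar\'e–Wirtinger term $\pi^0_\sigma\jump{\bu}-\jump{\bu}$.

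Next I would bound $R_\sigma$. Let $\bq_L$ be the $L^2(L)$‑orthogonal projection of $\bu$ onto $\Poly{1}(L)^d$. Since the negative‑side nodal DOFs are $(\IUD\bu)_{\Ls}=\bu_{|L}(\mathbf{x}_s)$, since $\bq_L$ is affine and $\overline{\mathbf{x}}_\sigma=\sum_s\omega_s^\sigma\mathbf{x}_s$, one has $\sum_s\omega_s^\sigma\bq_L(\mathbf{x}_s)=\bq_L(\overline{\mathbf{x}}_\sigma)=\pi^0_\sigma(\gamma^\Lsig\bq_L)$, so $R_\sigma$ vanishes on affine functions and $R_\sigma=\pi^0_\sigma\gamma^\Lsig(\bu-\bq_L)-\sum_s\omega_s^\sigma(\bu-\bq_L)_{|L}(\mathbf{x}_s)$. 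I would then bound the mean term by $|\sigma|^{-1/2}\NORM{L^2(\sigma)}{\gamma^\Lsig(\bu-\bq_L)}$ and the nodal average by $\max_{s\in\nodes_\sigma}|(\bu-\bq_L)_{|L}(\mathbf{x}_s)|$, and invoke the continuous trace inequality on $L$, the polynomial approximation estimates \eqref{eq:approx.poly}, and the $L^\infty$‑bound used in \eqref{eq:approx.poly.sup}. Mesh regularity then gives $\NORM{L^2(\sigma)}{R_\sigma}=|\sigma|^{1/2}|R_\sigma|\lsim h_L^{3/2}\SEMINORM{H^2(L)}{\bu}$ (the pointwise residual scales like $h^{(4-d)/2}$ and $|\sigma|^{1/2}\approx h^{(d-1)/2}$).

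Finally, for \eqref{eq:approx.jump.L2} I would add the Poincar\'e–Wirtinger bound $\NORM{L^2(\sigma)}{\pi^0_\sigma\jump{\bu}-\jump{\bu}}\lsim h_\sigma\SEMINORM{H^1(\sigma)}{\jump{\bu}}$ (legitimate since $\bu\in H^2(\cells)$ forces $\jump{\bu}_{|\sigma}\in H^1(\sigma)$ by trace theory), square, sum over $\sigma\in\faces_\Gamma$, and use the finite overlap of the cells $L_\sigma$ (each cell carries $\lsim 1$ faces) to obtain $h_\D\big(\SEMINORM{H^1(\faces_\Gamma)}{\jump{\bu}}+h_\D^{1/2}\SEMINORM{H^2(\cells)}{\bu}\big)$. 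For \eqref{eq:approx.jump.H12} I would use the identity $\jump{\IUD\bu}_\sigma-\ProjFG\jump{\bu}_{|\sigma}=R_\sigma$, the definition \eqref{def:norm.1/2D} of $\NORM{\nicefrac12,\D}{{\cdot}}$, the bound on $R_\sigma$, and $h_\sigma^{-1}h_L^3\lsim h_L^2$ to get $\lsim h_\D\SEMINORM{H^2(\cells)}{\bu}$. The main obstacle is the bookkeeping in the $R_\sigma$ estimate — tracking the mesh‑regularity powers of $h$ so that the sharp $h^{3/2}$ scaling of $\NORM{L^2(\sigma)}{R_\sigma}$ holds in all admissible dimensions; the genuinely new point is that, thanks to the cancellation in the algebraic reduction, the $\Hhalf$‑error sees only $R_\sigma$ and no residual $\jump{\bu}$ contribution.
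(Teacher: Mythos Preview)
Your proof is correct and follows essentially the same route as the paper's: both hinge on the identity $\jump{\IUD\bu}_\sigma=\pi^0_\sigma\big(\gamma^{\Ksig}\bu-\Pi^{\Lsig}\IUD\bu\big)$, which makes $\jump{\IUD\bu}_\sigma-\pi^0_\sigma\jump{\bu}$ depend only on the $L$-side residual $\gamma^{\Lsig}\bu-\Pi^{\Lsig}\IUD\bu$, and then use an affine approximation $\bq_L$ on $L$ together with the $L^\infty$ bound \eqref{eq:approx.poly.sup} and trace estimates to get the $h_L^{3/2}$ scaling.

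Your treatment of \eqref{eq:approx.jump.L2} is in fact slightly tighter than the paper's: by writing the exact decomposition $\jump{\IUD\bu}_\sigma-\jump{\bu}=(\pi^0_\sigma\jump{\bu}-\jump{\bu})+R_\sigma$ you only ever invoke the $L$-side linear approximation, whereas the paper introduces a piecewise-linear $\bq$ on $K\cup L$ and bounds $\jump{\IUD(\bu-\bq)}_\sigma$ via \eqref{eq:bound.jump}, picking up contributions from both cells. The gain is purely organisational (the final estimate is the same), but your identity makes it transparent that the $K$-side approximation is not needed once the bubble DOF has absorbed the $\Pi^{\Ksig}$ part.
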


\begin{proof}
These approximation properties are, similarly to \eqref{eq:interp.u.H2}, a consequence of the linear exactness of the jump reconstruction (upon projection on piecewise constant functions) and of a bound on this operator.

The definition \eqref{eq:def.jump.sigma} of the jump reconstruction directly gives
\begin{equation}\label{eq:bound.jump}
|\jump{\bv_\D}_\sigma|\lesssim \max_{s\in\nodes_\sigma}|\bv_\Ks| +\max_{s\in\nodes_\sigma}|\bv_\Ls|+ |\bv_\Ksig|\quad
\forall \bv_\D\in\UD\,,\ \forall\sigma=K|L\in\faces_\Gamma.
\end{equation}
Consider now, for such a $\sigma$, two linear functions $\bq_K\in\Poly{1}(K)^d$, resp.~$\bq_L\in\Poly{1}(L)^d$, that satisfy \eqref{eq:approx.poly.sup} for $K$, resp.~for $L$. Let $\bq$ be the piecewise polynomial on $K\cup L$ defined by $\bq_K$ and $\bq_L$.
Applying \eqref{eq:approx.poly.sup} on each side of $\sigma$ we have
\begin{equation}\label{eq:approx.jump.uq}
\NORM{L^\infty(\sigma)}{\jump{\bu-\bq}}\lesssim |K|^{-1/2}h_K^2\SEMINORM{H^2(K)}{\bu}+|L|^{-1/2}h_L^2\SEMINORM{H^2(L)}{\bu}.
\end{equation}
Moreover, even though $\bq$ is not globally defined or in $\U_0$, its interpolate on the degrees of freedom attached to $\sigma$ is well defined. The definition \eqref{eq:def.ID} of $\IUD$ easily shows that the degrees of freedom of $\IUD(\bu-\bq)$ on $\sigma$ are bounded by the maximum of $|\bu-\bq|$ on $K\cup L$. Hence, by \eqref{eq:bound.jump} and \eqref{eq:approx.poly.sup},
\begin{equation}\label{eq:approx.jump.IUDuq}
|\jump{\IUD(\bu-\bq)}_\sigma|\lesssim  |K|^{-1/2}h_K^2\SEMINORM{H^2(K)}{\bu}
+|L|^{-1/2}h_L^2\SEMINORM{H^2(L)}{\bu}.
\end{equation}
The linear exactness \eqref{eq:exact.linear.Ksig} applied on each side of $\sigma$ together with \eqref{eq:IDq.Ksig} shows that $\jump{\IUD\bq}_\sigma=\pi^0_\sigma\jump{\bq}$ (with $\pi^0_\sigma$ the $L^2$-projector on $\Poly{0}(\sigma)^d$). We can therefore write
\begin{align}
\NORM{L^2(\sigma)}{\jump{\IUD\bu}_\sigma-\jump{\bu}}
\le{}&
  \NORM{L^2(\sigma)}{\jump{\IUD(\bu-\bq)}_\sigma}
  + \NORM{L^2(\sigma)}{\pi^0_\sigma\jump{\bq-\bu}}
  + \NORM{L^2(\sigma)}{\pi^0_\sigma\jump{\bu}-\jump{\bu}}\nonumber\\
\lesssim{}&
  |\sigma|^{1/2}\left(|K|^{-1/2}h_K^2\SEMINORM{H^2(K)}{\bu}
+|L|^{-1/2}h_L^2\SEMINORM{H^2(L)}{\bu}\right)\nonumber\\
  &+\NORM{L^2(\sigma)}{\pi^0_\sigma\jump{\bu}-\jump{\bu}}\nonumber\\
\lesssim{}&
  h_\D^{3/2}\left(\SEMINORM{H^2(K)}{\bu}+\SEMINORM{H^2(L)}{\bu}\right)+ h_\sigma \SEMINORM{H^1(\sigma)}{\jump{\bu}},
\label{eq:bound.jump.long}
\end{align}
where we have introduced $\pm\pi^0_\sigma\jump{\bq-\bu}=\pm(\jump{\IUD\bq}_\sigma-\pi^0_\sigma\jump{\bu})$ and used a triangle inequality in the first line, invoked \eqref{eq:approx.jump.IUDuq} and used the $L^2(\sigma)$-boundedness of $\pi^0_\sigma$ together with \eqref{eq:approx.jump.uq} in the second inequality, and used the mesh regularity property in the conclusion to write $|\sigma|^{1/2}h_K^{1/2}\lesssim |K|^{1/2}$ and $|\sigma|^{1/2}h_L^{1/2}\lesssim |L|^{1/2}$, together with the approximation properties of $\pi^0_\sigma$ \cite[Theorem 1.45]{hho-book}.
Squaring, summing over $\sigma$ and taking the square root yields \eqref{eq:approx.jump.L2}.

To prove the second estimate, we first notice that, by definition \eqref{eq:def.jump.sigma} of $\jump{{\cdot}}_\sigma$ and \eqref{eq:def.ID} of the interpolator,
$$
\jump{\IUD\bu}_\sigma=\frac{1}{|\sigma|}\int_\sigma (\trace^\Ksig\bu - \Pi^\Lsig(\IUD\bu))=\pi^0_\sigma(\trace^\Ksig\bu - \Pi^\Lsig(\IUD\bu)).
$$
Hence,
\begin{align*}
\NORM{L^2(\sigma)}{\jump{\IUD\bu}_{\sigma} - \pi^0_\sigma\jump{\bu}}
={}&\NORM{L^2(\sigma)}{\pi^0_\sigma(\trace^\Ksig\bu - \Pi^\Lsig(\IUD\bu) - \jump{\bu})}\\
\le{}& \NORM{L^2(\sigma)}{\trace^\Lsig\bu-\Pi^\Lsig(\IUD\bu)}.
\end{align*}
Introduce as above a linear approximation $\bq_L$ of $\bu$ in $L$, and use the linear exactness \eqref{eq:exact.linear.Ksig} (with $L$ instead of $K$) of $\Pi^\Lsig$ to deduce
\begin{equation*}
\NORM{L^2(\sigma)}{\jump{\IUD\bu}_{\sigma} - \pi^0_\sigma\jump{\bu}}
\le \NORM{L^2(\sigma)}{\bu_{|L}-\bq_L}+\NORM{L^2(\sigma)}{\Pi^\Lsig\IUD(\bu_{|L}-\bq_L)}.
\end{equation*}
In a similar way as \eqref{eq:bound.PiK} we can show that $\Pi^\Lsig\bw_\D$ is bounded above, on $\sigma$, by the maximum of the absolute values of the degrees of freedom associated with $\sigma$. By definition of $\IUD$ and \eqref{eq:approx.poly.sup} (with $L$ instead of $K$) and using similar arguments as for the first terms in \eqref{eq:bound.jump.long}, we infer
\begin{equation*}
\NORM{L^2(\sigma)}{\jump{\IUD\bu}_{\sigma} - \pi^0_\sigma\jump{\bu}}
\lesssim h_L^{3/2} \SEMINORM{H^2(L)}{\bu}.
\end{equation*}
Squaring, multiplying by $h_\sigma^{-1}\lesssim h_L^{-1}$, summing over $\sigma$ and taking the square root yields \eqref{eq:approx.jump.H12}.
\end{proof}

\begin{lemma}[Adjoint consistency] \label{lem:adjoint.consistency}
Recalling the definition \eqref{def:WD} of the adjoint consistency error $\mathcal{W}_\D$, we have  for all $\bbtau \in \mathbf{W}$ such that $\bbtau\in H^1(\cells)^{d\times d}$ the estimate 
        \begin{equation}\label{est_WD}
		\mathcal{W}_{\D}(\bbtau) \lsim  h_{\D} \SEMINORM{H^1(\cells)}{\bbtau}. 
	\end{equation} 
\end{lemma}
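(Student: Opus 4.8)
The goal is to bound $w_\D(\bbtau,\bv_\D)$, defined in \eqref{def:WD}, by $h_\D\SEMINORM{H^1(\cells)}{\bbtau}\NORM{1,\D}{\bv_\D}$. The starting point is to rewrite, cell by cell, the continuous integral $\int_K\bbtau:\bbeps_\D(\bv_\D)=\int_K\bbtau:\nabla^K\bv_\D$ (using the symmetry of $\bbtau$) and to recognise $\nabla^K\bv_\D$ and $\wt\Pi^\D\bv_\D=\ov\bv_K$ as the reconstructions \eqref{eq:def.nablaK}--\eqref{eq:def.PiK}. The key algebraic identity is a \emph{discrete Stokes/Green formula}: for a constant tensor $\bs\Phi$ one has
\[
  \int_K \bs\Phi:\nabla^K\bv_\D
  = \sum_{\sigma\in\faces_K}|\sigma|\,\ov\bv_\Ksig\cdot(\bs\Phi\,\normal_\Ksig)
  + \sum_{\sigma\in\faces_{\Gamma,K}^+}|\sigma|\,\bv_\Ksig\cdot(\bs\Phi\,\normal_\Ksig),
\]
which is immediate from \eqref{eq:def.nablaK}, and similarly the tangential face gradient \eqref{def:def.operators.sigma} satisfies an analogous edge-based identity on each $\sigma$. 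Using that $\bbtau$ is piecewise $H^1$, I would replace $\bbtau$ on each $K$ by its $L^2(K)$-projection $\ov\bbtau_K$ on constants (or, better, its $\Poly{0}$-projection, controlling the remainder by $h_K\SEMINORM{H^1(K)}{\bbtau}$ via \cite[Theorem 1.45]{hho-book}); the consistency of $\nabla^K$ and $\nabla^\Ksig$ on polynomials — already recorded around \eqref{eq:nablaK.ID.q} and \eqref{eq:exact.linear.Ksig} via \cite[Lemma 14.8]{gdm} — makes the leading (constant) part of $\bbtau$ cancel exactly against the boundary and divergence terms, leaving only the oscillation $\bbtau-\ov\bbtau_K$ to estimate.

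More precisely, the plan is: (i) for each $K$, write $\int_K\bbtau:\nabla^K\bv_\D-\int_\Omega\ov\bv_K\cdot\div\bbtau$ and integrate the true integral $\int_K\ov\bbtau_K:\nabla\bv$-type term by parts using the genuine Stokes formula on $K$ applied to the \emph{reconstruction} $\Pi^K\bv_\D$, so as to produce face contributions $\sum_{\sigma\in\faces_K}\int_\sigma(\gamma^\Ksig\Pi^K\bv_\D)\cdot(\ov\bbtau_K\normal_\Ksig)$ and cell contributions that telescope; (ii) recognise that, because $\bv_\D\in\UDz$, on each non-fracture interior face $\sigma=K|L$ the nodal values (hence $\ov\bv_\Ksig=\ov\bv_\Lsig$) coincide and the averaged face terms built from $\bbtau\normal^+=-\bbtau\normal^-$ cancel against the interface condition $\gamma_\normal^+\bbtau+\gamma_\normal^-\bbtau=0$ defining $\mathbf W$ in \eqref{def:esp.strain}, while on fracture faces they reorganise into $\int_\Gamma\gamma_\normal^+\bbtau\cdot\jump{\bv_\D}_\D$ up to controllable remainders; this is exactly what makes the middle term $+\int_\Gamma\gamma_\normal^+\bbtau\cdot\jump{\bv_\D}_\D$ in $w_\D$ appear with the right sign; (iii) the leftover is a sum over cells and faces of terms of the form $\int_K(\bbtau-\ov\bbtau_K):(\nabla^K\bv_\D-\text{something})$ plus boundary mismatches between $\gamma^\Ksig\Pi^K\bv_\D$ and the DOF-based face averages, which are controlled by the stabilisation $S_K$ through the definition \eqref{eq:def.SK}; apply Cauchy--Schwarz, the trace inequality $\NORM{L^2(\sigma)}{\gamma^\Ksig w}\lsim h_K^{-1/2}\NORM{L^2(K)}{w}+h_K^{1/2}\NORM{L^2(K)}{\nabla w}$ on $K$, the polynomial-oscillation bound $\NORM{L^2(K)}{\bbtau-\ov\bbtau_K}+h_K\NORM{L^2(\sigma)}{\bbtau-\ov\bbtau_K}\lsim h_K\SEMINORM{H^1(K)}{\bbtau}$, and the local norm definition \eqref{eq:def.normD} to get a bound $\lsim h_K\SEMINORM{H^1(K)}{\bbtau}\NORM{1,K}{\bv_\D}$; (iv) sum over $K\in\cells$, use discrete Cauchy--Schwarz over cells and the definition \eqref{def:WD} of $\mathcal W_\D$ to conclude \eqref{est_WD}.

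\textbf{Main obstacle.} The delicate point is step (ii): carefully tracking the face integrals so that the three pieces of $w_\D$ — the bulk term $-\int_\Omega\bbtau:\bbeps_\D(\bv_\D)$, the fracture term $+\int_\Gamma\gamma_\normal^+\bbtau\cdot\jump{\bv_\D}_\D$, and the divergence term $-\int_\Omega\wt\Pi^\D\bv_\D\cdot\div\bbtau$ — combine so that \emph{all} the constant parts of $\bbtau$ cancel and what survives is genuinely $O(h_\D\SEMINORM{H^1(\cells)}{\bbtau})$. In particular one must handle three distinct face types (non-fracture interior faces, boundary faces, and fracture faces with their single ``positive-side'' bubble $\bv_\Ksig$), and on each ensure the mismatch between the geometric trace $\gamma^\Ksig\Pi^K\bv_\D$ and the DOF-built quantities ($\ov\bv_\Ksig$, $\Pi^\Ksig\bv_\D$, and on fractures $\bv_\Ksig$) is either zero by construction of $\UDz$ or absorbed into $S_K$. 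Once this bookkeeping is done, the remaining estimates are the standard polynomial-approximation and trace inequalities from \cite{hho-book} and cause no difficulty.
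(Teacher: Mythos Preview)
Your overall strategy is correct and would lead to \eqref{est_WD}, but the paper takes a markedly more direct route than the one you outline. You propose to integrate by parts the continuous Stokes formula on the affine reconstruction $\Pi^K\bv_\D$, then track the mismatch between $\gamma^{\Ksig}\Pi^K\bv_\D$ and the DOF-based face quantities, absorbing the latter into the stabilisation $S_K$. This works, but it creates two layers of error terms (trace of $\Pi^K\bv_\D$ vs.\ $\ov\bv_\Ksig$, and cell average $\ov\bbtau_K$ vs.\ face average of $\bbtau\normal_{\Ksig}$) that must each be handled. In particular your claim in step (ii) that ``the averaged face terms built from $\bbtau\normal^+=-\bbtau\normal^-$ cancel'' is only true for the \emph{face} averages of $\bbtau\normal_{\Ksig}$, not for the \emph{cell} averages $\ov\bbtau_K\normal_{\Ksig}$; you would need an extra add--subtract of the face average to make this step go through.

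The paper sidesteps all of this. It never touches $\Pi^K\bv_\D$ or its traces: it simply expands $\int_K\bbtau:\nabla^K\bv_\D=|K|\,\bbtau_K:\nabla^K\bv_\D$ using the definition \eqref{eq:def.nablaK} (this is exactly your ``discrete Stokes formula''), and writes $\int_K\ov\bv_K\cdot\div\bbtau=\sum_{\sigma\in\faces_K}|\sigma|\,\ov\bv_K\cdot\boldsymbol{\tau}_{\Ksig}$ with the \emph{face} averages $\boldsymbol{\tau}_{\Ksig}=\frac{1}{|\sigma|}\int_\sigma\bbtau|_K\normal_{\Ksig}$. Two purely algebraic observations then do the bookkeeping you flag as the main obstacle: (a) normal continuity in $\mathbf W$ gives $\boldsymbol{\tau}_{\Ksig}=-\boldsymbol{\tau}_{\Lsig}$ exactly, so the non-fracture interior face contributions combine into $\sum_\sigma|\sigma|\,\ov\bv_{\Ksig}\cdot\boldsymbol{\tau}_{\Ksig}$; and (b) $\sum_{\sigma\in\faces_K}|\sigma|\normal_{\Ksig}=0$ lets one insert $\ov\bv_K\cdot(\bbtau_K\normal_{\Ksig})$ for free. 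After these two moves $w_\D(\bbtau,\bv_\D)$ collapses to
\[
\sum_{K}\sum_{\sigma\in\faces_K}|\sigma|\,(\ov\bv_{\Ksig}-\ov\bv_K)\cdot(\boldsymbol{\tau}_{\Ksig}-\bbtau_K\normal_{\Ksig})
+\sum_{\sigma\in\faces_\Gamma}|\sigma|\,\bv_{\Ksig}\cdot(\boldsymbol{\tau}_{\Ksig}-\bbtau_K\normal_{\Ksig}),
\]
with no stabilisation residues and no trace of $\Pi^K$. A weighted Cauchy--Schwarz then splits off the $\bv_\D$-side, which is controlled by $\NORM{1,\D}{\bv_\D}$ via a separate elementary lemma (Lemma \ref{inequ_pour_consist}: $\sum\frac{|\sigma|}{h_K}|\ov\bv_{\Ksig}-\ov\bv_K|^2\lsim\NORM{1,\D}{\bv_\D}^2$ and similarly for the bubble DOFs), and the $\bbtau$-side $\sum|\sigma|h_K|\boldsymbol{\tau}_{\Ksig}-\bbtau_K\normal_{\Ksig}|^2\lsim h_\D^2\SEMINORM{H^1(\cells)}{\bbtau}^2$ is a standard face-vs-cell average estimate. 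So what your plan achieves through $\Pi^K$, trace inequalities, and stabilisation absorption, the paper obtains by pure DOF algebra; both reach the same destination, but the paper's path is shorter and avoids the delicate point you correctly identified.
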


\begin{proof}
	For each $\bbtau \in \mathbf{W} \cap H^1(\cells)^{d\times d}$, the definitions \eqref{def:WD} of $w_\D$ and \eqref{eq:def.jump.sigma} of $\jump{{\cdot}}_\sigma$ yield 
	$$
		\begin{aligned}
			w_{\D}(\bbtau,\bv_{\D}) = &\sum_{K \in \cells}\(  - \(\int_{K} \bbtau \) : \bbeps_{K}(\bv_{\D} ) - \sum_{\sigma \in \faces_{K}} \overline{\bv}_K \cdot  \int_{\sigma} (\bbtau|_K\, \normal_{\Ksig}  ) \)\\
			&+  \sum_{\sigma = K|L \in \faces_{\Gamma}} \(\int_{\sigma} \bbtau|_K \, \normal_{K \sigma} \)\cdot(\overline{\bv}_{\Ksig} - \overline{\bv}_{\Lsig} + \bv_{\Ksig}  ) \\
			= &\sum_{K \in \cells}  \(-|K|\,\bbtau_K : \bbeps_{K}(\bv_{\D} ) - \sum_{\sigma \in \faces_{K}} |\sigma|\,\overline{\bv}_K \cdot  \boldsymbol{\tau}_{\Ksig} \)\\
			&+ \sum_{\sigma = K|L \in \faces_{\Gamma}} |\sigma| \boldsymbol{\tau}_{\Ksig} \cdot (\overline{\bv}_{\Ksig} - \overline{\bv}_{\Lsig} + \bv_{\Ksig}  ),\nonumber
		\end{aligned}
	$$	
	with
	$$
		\bbtau_K = \frac{1}{|K|} \int_K \bbtau \quad \text{and} \quad \boldsymbol{\tau}_{K\sigma} = \frac{1}{|\sigma|} \int_{\sigma} (\bbtau|_K \normal_{K \sigma}).
  $$
	Recalling the definition \eqref{eq:def.nablaK} of  $\grad^K$ and noticing that $\bbtau_K:\bbeps_K(\bv_{\D}) = \bbtau_K:\grad^K \bv_{\D}$ since $\bbtau_K$ is symmetric, we infer
	\begin{equation}\label{eq:adjoint.consistency.1}
		\begin{aligned}
			w_{\D}(\bbtau,\bv_{\D}) =& \sum_{K \in \cells} \sum_{\sigma \in \faces_K} \(-|\sigma|\, \overline{\bv}_{\Ksig} \cdot ( \bbtau_K \normal_{\Ksig}) -|\sigma|\, \overline{\bv}_{K} \cdot \boldsymbol{\tau}_{K\sigma} \) \\
			&-  \sum_{\sigma = K|L \in \faces_{\Gamma}} |\sigma|\, \bv_{\Ksig} \cdot ( \bbtau_K \normal_{\Ksig}) +  \sum_{\sigma = K|L \in \faces_{\Gamma}} |\sigma| \boldsymbol{\tau}_{K\sigma} \cdot (\overline{\bv}_{\Ksig} - \overline{\bv}_{\Lsig} + \bv_{\Ksig}  ).
		\end{aligned}
	\end{equation}
By the normal continuity property embedded in the space $\mathbf{W}$ we have $\boldsymbol{\tau}_{\Ksig}=-\boldsymbol{\tau}_{\Lsig}$ for all $\sigma=K|L\in\faces^{\text{int}}$. Since $\overline{\bv}_{\Ksig}=\overline{\bv}_{\Lsig}$ whenever $\sigma\not\in\faces_\Gamma$, we infer
  $$
 	\sum_{\sigma = K|L \in \faces_{\Gamma}} |\sigma|\, \boldsymbol{\tau}_{K\sigma} \cdot (\overline{\bv}_{\Ksig} -  \overline{\bv}_{\Lsig}) = \sum_{K \in \cells} \sum_{\sigma \in \faces_K} |\sigma|\, \overline{\bv}_{\Ksig} \cdot \boldsymbol{\tau}_{K\sigma}.
 	$$
	Moreover, as $\sum_{\sigma \in \faces_K} |\sigma|\,\normal_{\Ksig}=0$ for all $K\in\cells$,
	$$
		\sum_{K \in \cells} \sum_{\sigma \in \faces_K} \, |\sigma|\overline{\bv}_{K} \cdot ( \bbtau_{K} \normal_{\Ksig}) = \sum_{K \in \cells} \overline{\bv}_{K} \cdot \(\bbtau_{K} \sum_{\sigma \in \faces_K} |\sigma|\,\normal_{\Ksig}\)=0.
	$$
	Plugging these relations into \eqref{eq:adjoint.consistency.1} leads to
	$$
		\begin{aligned}
			w_{\D}(\bbtau,\bv_{\D}) ={}& \sum_{K \in \cells} \sum_{\sigma \in \faces_K} |\sigma|\, \overline{\bv}_{\Ksig} \cdot (-\bbtau_{K}\normal_{\Ksig} + \boldsymbol{\tau}_{K\sigma}) + \sum_{K \in \cells} \sum_{\sigma \in \faces_K} |\sigma|\, \overline{\bv}_{K} \cdot (\bbtau_{K}\normal_{\Ksig} - \boldsymbol{\tau}_{K\sigma})  \\
			&+  \sum_{\sigma = K|L \in \faces_{\Gamma}} |\sigma|\, \boldsymbol{\tau}_{K\sigma} \cdot \bv_{\Ksig} - \sum_{\sigma = K|L \in \faces_{\Gamma}} |\sigma|\, ( \bbtau_{K} \normal_{\Ksig}) \cdot \bv_{\Ksig}  \\
			={}& \sum_{K \in \cells} \sum_{\sigma \in \faces_K} |\sigma|\, (\overline{\bv}_{\Ksig} - \overline{\bv}_{K} ) \cdot (\boldsymbol{\tau}_{K\sigma} - \bbtau_{K}\normal_{\Ksig} )  +  \sum_{\sigma = K|L \in \faces_{\Gamma}} |\sigma|\, \bv_{K\sigma} \cdot (\boldsymbol{\tau}_{K\sigma} - \bbtau_{K}\normal_{\Ksig}).
		\end{aligned}
  $$	
  Using the Cauchy--Schwarz inequality and invoking Lemma \ref{inequ_pour_consist}, we infer
  \begin{align}
    w_{\D}(\bbtau,\bv_{\D}) \le{}& \left(\sum_{K \in \cells} \sum_{\sigma \in \faces_K} \frac{|\sigma|}{h_K} |\overline{\bv}_{\Ksig} - \overline{\bv}_{K} |^2\right)^{\nicefrac12} \left(\sum_{K \in \cells} \sum_{\sigma \in \faces_K} |\sigma|h_K|\boldsymbol{\tau}_{K\sigma} - \bbtau_{K}\normal_{\Ksig} |^2\right)^{\nicefrac12}\nonumber\\
    &+ \left( \sum_{\sigma = K|L \in \faces_{\Gamma}} \frac{|\sigma|}{h_K}|\bv_{K\sigma}| \right)^{\nicefrac12}
    \left( \sum_{\sigma = K|L \in \faces_{\Gamma}} |\sigma| h_K |\boldsymbol{\tau}_{K\sigma} - \bbtau_{K}\normal_{\Ksig}|^2\right)^{\nicefrac12}\nonumber\\
    \lesssim{}& \NORM{1,\D}{\bv_{\D}}  \left(\sum_{K \in \cells} \sum_{\sigma \in \faces_K} |\sigma| h_K |\boldsymbol{\tau}_{K\sigma} - \bbtau_{K}\normal_{\Ksig} |^2\right)^{\nicefrac12}.
  \label{est:limconf.1}
  \end{align}
  Note that the term $\sum_{\sigma = K|L \in \faces_{\Gamma}} |\sigma| h_K |\boldsymbol{\tau}_{K\sigma} - \bbtau_{K}\normal_{\Ksig}|^2$ has been included in the last factor in the right-hand side. By \cite[Lemma B.6]{gdm},
  $$
    |\sigma| h_K |\boldsymbol{\tau}_{K\sigma} - \bbtau_{K}\normal_{\Ksig} |^2\lesssim h_K^2\SEMINORM{H^1(K)}{\bbtau}^2.
  $$
  Plugging this into \eqref{est:limconf.1}, dividing by $\NORM{1,\D}{\bv_{\D}}$ and taking the supremum over $\bv_\D$ concludes the proof.
\end{proof}

\begin{lemma}\label{inequ_pour_consist}
	For all $\bv_{\D} \in \UDz$, the following two inequalities hold:
	\begin{align}
		\label{est:norm.v.Gamma}
		\(\sum_{\sigma =K|L \in \faces_{\Gamma}}  \frac{|\sigma|}{h_K}|\bv_{\Ksig}|^2 \)^{\nicefrac12}\lesssim{}& \NORM{1,\D}{\bv_{\D}} ,\\
    \label{est:norm.v.Omega}
		\(\sum_{K \in \cells} \sum_{\sigma \in \faces_K} \frac{|\sigma|}{h_K}|\overline{\bv}_{\Ksig} - \overline{\bv}_{K}|^2 \)^{\nicefrac12}\lesssim{}& \NORM{1,\D}{\bv_{\D}}.
	\end{align}
\end{lemma}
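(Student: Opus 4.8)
The plan is to establish each of the two bounds by a direct computation from the explicit formulas defining the degrees of freedom, the reconstruction operators, and the mesh-regularity estimates; no tool beyond what precedes in the paper is needed.

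\textbf{First bound.} The key observation is that the bubble value $\bv_{\Ksig}$ enters the local stabilisation \eqref{eq:def.SK} precisely through the term $h_K^{d-2}\sum_{\sigma\in\faces_{\Gamma,K}^+}|\bv_{\Ksig}|^2$. For $\sigma=K|L\in\faces_\Gamma$, our labelling convention makes $K$ the unique cell with $\sigma\in\faces_{\Gamma,K}^+$, so summing over $\sigma\in\faces_\Gamma$ is the same as summing over $K\in\cells$ and then over $\sigma\in\faces_{\Gamma,K}^+$. Using the mesh-regularity estimate $|\sigma|\lsim h_K^{d-1}$, hence $|\sigma|/h_K\lsim h_K^{d-2}$, I would write
\[
\sum_{\sigma=K|L\in\faces_\Gamma}\frac{|\sigma|}{h_K}|\bv_{\Ksig}|^2
\lsim\sum_{K\in\cells}h_K^{d-2}\sum_{\sigma\in\faces_{\Gamma,K}^+}|\bv_{\Ksig}|^2
\le S_\D(\bv_\D,\bv_\D)\le\NORM{1,\D}{\bv_\D}^2 ,
\]
and \eqref{est:norm.v.Gamma} follows after taking square roots.

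\textbf{Second bound.} Here the idea is to compare $\overline{\bv}_{\Ksig}$ and $\overline{\bv}_K$ through the cell reconstruction $\Pi^K\bv_\D$, whose defects against the nodal values are exactly what $S_K$ controls. Since the weights $(\omega_s^\sigma)_{s\in\nodes_\sigma}$ are nonnegative with sum $1$ and $\nodes_\sigma\subset\nodes_K$ for $\sigma\in\faces_K$, one has $|\overline{\bv}_{\Ksig}-\overline{\bv}_K|=|\sum_{s\in\nodes_\sigma}\omega_s^\sigma(\bv_{\Ks}-\overline{\bv}_K)|\le\max_{s\in\nodes_K}|\bv_{\Ks}-\overline{\bv}_K|$. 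Writing $\overline{\bv}_K=\Pi^K\bv_\D(\overline{\mathbf{x}}_K)$ from \eqref{eq:def.PiK} and inserting $\pm\Pi^K\bv_\D(\mathbf{x}_s)$, together with $\Pi^K\bv_\D(\mathbf{x}_s)-\Pi^K\bv_\D(\overline{\mathbf{x}}_K)=\nabla^K\bv_\D(\mathbf{x}_s-\overline{\mathbf{x}}_K)$ and $|\mathbf{x}_s-\overline{\mathbf{x}}_K|\le h_K$, this gives
\[
|\bv_{\Ks}-\overline{\bv}_K|\le|\bv_{\Ks}-\Pi^K\bv_\D(\mathbf{x}_s)|+h_K\,|\nabla^K\bv_\D| .
\]
Squaring, summing over $\sigma\in\faces_K$ with $\sum_{\sigma\in\faces_K}|\sigma|/h_K\lsim h_K^{d-2}$ and $\mathrm{Card}(\nodes_K)\lsim 1$ (mesh regularity), and then using $h_K^{d-2}\lsim h_K^{-2}|K|$ and the fact that $\nabla^K\bv_\D$ is constant on $K$, I obtain
\[
\sum_{\sigma\in\faces_K}\frac{|\sigma|}{h_K}|\overline{\bv}_{\Ksig}-\overline{\bv}_K|^2
\lsim h_K^{d-2}\!\!\sum_{s\in\nodes_K}\!|\bv_{\Ks}-\Pi^K\bv_\D(\mathbf{x}_s)|^2+|K|\,|\nabla^K\bv_\D|^2
\lsim S_K(\bv_\D,\bv_\D)+\NORM{L^2(K)}{\nabla^K\bv_\D}^2=\NORM{1,K}{\bv_\D}^2 ,
\]
where the first term on the right of the middle expression is bounded by $S_K(\bv_\D,\bv_\D)$ by definition \eqref{eq:def.SK}, and the second equals $\NORM{L^2(K)}{\nabla^K\bv_\D}^2$ since $\nabla^K\bv_\D$ is cellwise constant. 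Summing over $K\in\cells$ and recalling the definition \eqref{eq:def.normD} of $\NORM{1,\D}{\cdot}$, then taking square roots, yields \eqref{est:norm.v.Omega}.

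\textbf{Main obstacle.} There is in fact no serious difficulty: both estimates are bookkeeping built on the explicit formulas \eqref{eq:def.SK}, \eqref{eq:def.PiK} and the standard mesh-regularity bounds on $|\sigma|$, $|K|$ and $\mathrm{Card}(\nodes_K)$. The only points demanding a little care are the reorganisation of the fracture-face sum in the first inequality (keeping the labelling $\sigma=K|L$ with $\sigma\in\faces_{\Gamma,K}^+$ consistent), and, in the second, inserting $\Pi^K\bv_\D(\mathbf{x}_s)$ so that \emph{both} the stabilisation term and the gradient term of $\NORM{1,K}{\cdot}$ appear — giving back exactly the local norm and making the final summation over cells immediate.
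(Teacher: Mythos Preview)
Your proof is correct and follows essentially the same approach as the paper: the first bound is identical, and for the second you both split $\overline{\bv}_{\Ksig}-\overline{\bv}_K$ into a stabilisation-controlled defect $\bv_{\Ks}-\Pi^K\bv_\D(\mathbf{x}_s)$ and a gradient term via $\Pi^K\bv_\D$, then use the same mesh-regularity scalings. The only cosmetic difference is that the paper exploits the exact identity $\sum_{s\in\nodes_\sigma}\omega_s^\sigma(\bv_{\Ks}-\Pi^K\bv_\D(\mathbf{x}_s))=\overline{\bv}_{\Ksig}-\overline{\bv}_K-\nabla^K\bv_\D(\overline{\mathbf{x}}_\sigma-\overline{\mathbf{x}}_K)$ together with Jensen's inequality, whereas you pass through the $\max$ over $s\in\nodes_K$ and a triangle inequality; both lead to the same local bound $\lsim\NORM{1,K}{\bv_\D}^2$.
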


\begin{proof}	To prove \eqref{est:norm.v.Gamma}, we simply write, by mesh regularity property, $\frac{|\sigma|}{h_K}\lesssim h_K^{d-2}$, so that, by definition \eqref{eq:def.normD} of the discrete norm,
  $$
  \sum_{\sigma =K|L \in \faces_{\Gamma}}  \frac{|\sigma|}{h_K}|\bv_{\Ksig}|^2
  \lesssim \sum_{\sigma =K|L \in \faces_{\Gamma}}  h_K^{d-2}|\bv_{\Ksig}|^2 \lesssim S_\D(\bv_\D,\bv_\D)\lesssim  \NORM{1,\D}{\bv_{\D}}^2.
  $$
	
	We now turn to \eqref{est:norm.v.Omega}. Let $K\in\cells$ and $\sigma\in\faces_K$. By the choice \eqref{eq:choice.weights.sigma} of the weights $(\omega_{s}^\sigma)_{s\in\nodes_\sigma}$, the definition \eqref{def:def.operators.sigma} of $\overline{\bv}_{\Ksig}$ and the definition \eqref{eq:def.PiK} of the linear function $\Pi^K\bv_\D$, we have
	$$
	 \sum_{s \in \nodes_{\sigma}} \omega_{s}^\sigma(\bv_{\Ks} - \Pi^K \bv_{\D}(\mathbf{x}_s)) = \overline{\bv}_{\Ksig} - \overline{\bv}_{K}  - \grad^K\bv_{\D}(\ov\x_{\sigma} - \ov\x_{K}).
	 $$
	The convexity of the function $x \rightarrow |x|^2$ then yields	
	$$
	|\overline{\bv}_{\Ksig} - \overline{\bv}_{K}  - \grad^K\bv_{\D}(\ov\x_{\sigma} - \ov\x_{K})|^2 \le  \sum_{s \in \nodes_{\sigma}} \omega_{s}^\sigma|\bv_{\Ks} - \Pi^K \bv_{\D}(\mathbf{x}_s)|^2.
	$$
	Apply the inequality ${1 \over 2} |\boldsymbol{a}|^2 \leq |\boldsymbol{b}|^2 + |\boldsymbol{a}-\boldsymbol{b}|^2$ with $\boldsymbol{a} = \overline{\bv}_{\Ksig} - \overline{\bv}_{K} $ and $\boldsymbol{b} =  \grad^K\bv_{\D}(\ov\x_{\sigma} - \ov\x_{K})$. Recalling the definition \eqref{eq:def.SK} of $S_K$, and using the facts that each $\omega^s_\sigma$ is in $[0,1]$ and that the number of faces that meet at a vertex $s\in\nodes_K$ is $\lesssim 1$, we deduce that	
	\begin{equation*}%\label{convex_ineq}
		\frac{1}{2}|\overline{\bv}_{\Ksig} - \overline{\bv}_{K}|^2 \le h_K^2\,|\grad^K \bu_{\D}|^2 + h_K^{2-d} S_K(\bv_{\D},\bv_{\D}).
	\end{equation*}
	The estimate \eqref{est:norm.v.Omega} then follows by multiplying by $|\sigma|/h_K$, by noticing that $|\sigma| h_K\lesssim |K|\lesssim h_K^d$, and by summing over $K\in\cells$.
\end{proof}

\section{Numerical Experiments} \label{sec:numerics}

\subsection{Unbounded 2D domain with a single fracture under compression}
\label{tchelepi_meca}
This test case presented in~\cite{contact-BEM,tchelepi-castelletto-2020,GKT16,droniou2023bubble} for a Coulomb frictional contact model also applies here to the Tresca frictional contact model \eqref{mode_tresca} since the normal traction of the analytical solution and the friction coefficient are constant along the fracture. It consists of a 2D unbounded domain containing a single fracture and subject to a compressive remote stress $\sigma$ = 100 MPa. The fracture inclination with respect to the $x$-direction is $\psi=\pi/9$ and its length is $2 \ell=2$ m.  The Coulomb-friction coefficient, Young's modulus and Poisson's ratio are set to  $F=1/\sqrt{3}$, $E=25$ GPa and $\nu=0.25$. The analytical solution in terms of the Lagrange multiplier $\lambda_{\n}$ and of the jump of the tangential displacement field is given by: 
\begin{equation}\label{sol.compression}
	\lambda_{\n}  = \sigma \sin^2(\psi),\quad | {\jump{\bu } }_{\tang}| = \frac{4(1-\nu)}{E}  \sigma \sin (\psi) \left( \cos(\psi) - \frac{\trescacof}{ \lambda_{\n}}\sin(\psi)\right)\sqrt{\ell^2 - (\ell^2 - \tau^2)},
\end{equation}
where $0\le \tau\le2\ell$ is a curvilinear abscissa along the fracture. Note that, since $\lambda_{\n}>0$, we have $\jump{\bu}_{\n}=0$ on the fracture. It results that the Tresca threshold is constant along the fracture and defined by $\trescacof = F\lambda_{\n}$.

Boundary conditions are imposed on $\bu$ at specific nodes of the mesh, as shown in Figure~\ref{test_compression}, to respect the symmetry of the expected solution.
For this simulation, we sample a $320 \text{m}\times320\,\rm m$ square, and carry out uniform refinements at each step in such a way to compute the solution on meshes containing 100, 200, 400, and 800 faces on the fracture (corresponding, respectively, to 12\,468, 49\,872, 199\,488, and 797\,952 triangular elements). The initial mesh is refined in a neighborhood of the fracture; starting from this mesh, we perform global uniform refinements at each step. 

Figure~\ref{compression_comparison} shows the comparison between the analytical and numerical Lagrange multipliers $\lambda_{\n}$  and tangential displacement jump $\jump{\bu }_{\tang}$ computed on the finest mesh. The Lagrange multiplier $\lambda_{\n}$ presents some oscillations in a neighborhood of the fracture tips. As already explained in~\cite{tchelepi-castelletto-2020}, this is due to the sliding of faces close to the fracture tips (in this test case, all fracture faces are in a contact-slip state). The discrete tangential displacement jump cannot be distinguished from the analytical solution on this fine mesh. 
Figure \ref{compression_error}  displays the convergence of the tangential displacement jump and of the normal Lagrange multiplier as a function of the size of the largest fracture face denoted by $h$. Note that the $L^2$ error for the Lagrange multiplier is computed $5\%$ away from each tip to circumvent the lack of convergence induced by the oscillations as in \cite{tchelepi-castelletto-2020}. A first-order convergence for the displacement jump and a $1.5$ convergence order for the Lagrange multiplier are observed. The former (low) rate is related to the low regularity of $\jump{\bu}_\tang$ close to the tips (cf.~the analytical expression~\eqref{sol.compression}), the latter (higher than expected) rate is likely related to the fact that $\lambda_{\n}$ is constant. 
%Table \ref{tab:Table_11} also shows the robust convergence  of the semi-smooth Newton algorithm on the family of refined meshes. 
\begin{figure}[H]
	%\captionsetup[subfigure]{labelformat=empty}
	\centering
	
	\subfloat[]{
		\raisebox{.45cm}{
			\includegraphics[keepaspectratio=true,scale=.35]{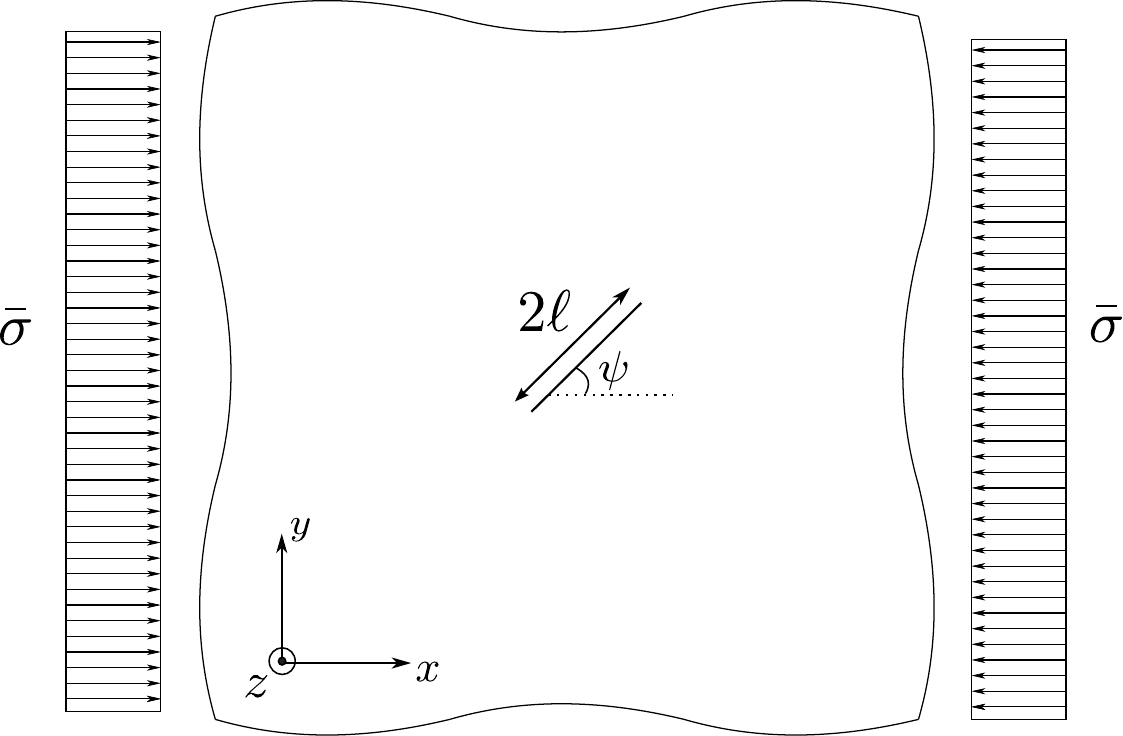}}
	}
	\hspace{2cm}
	\subfloat[]{\includegraphics[keepaspectratio=true,scale=.1]{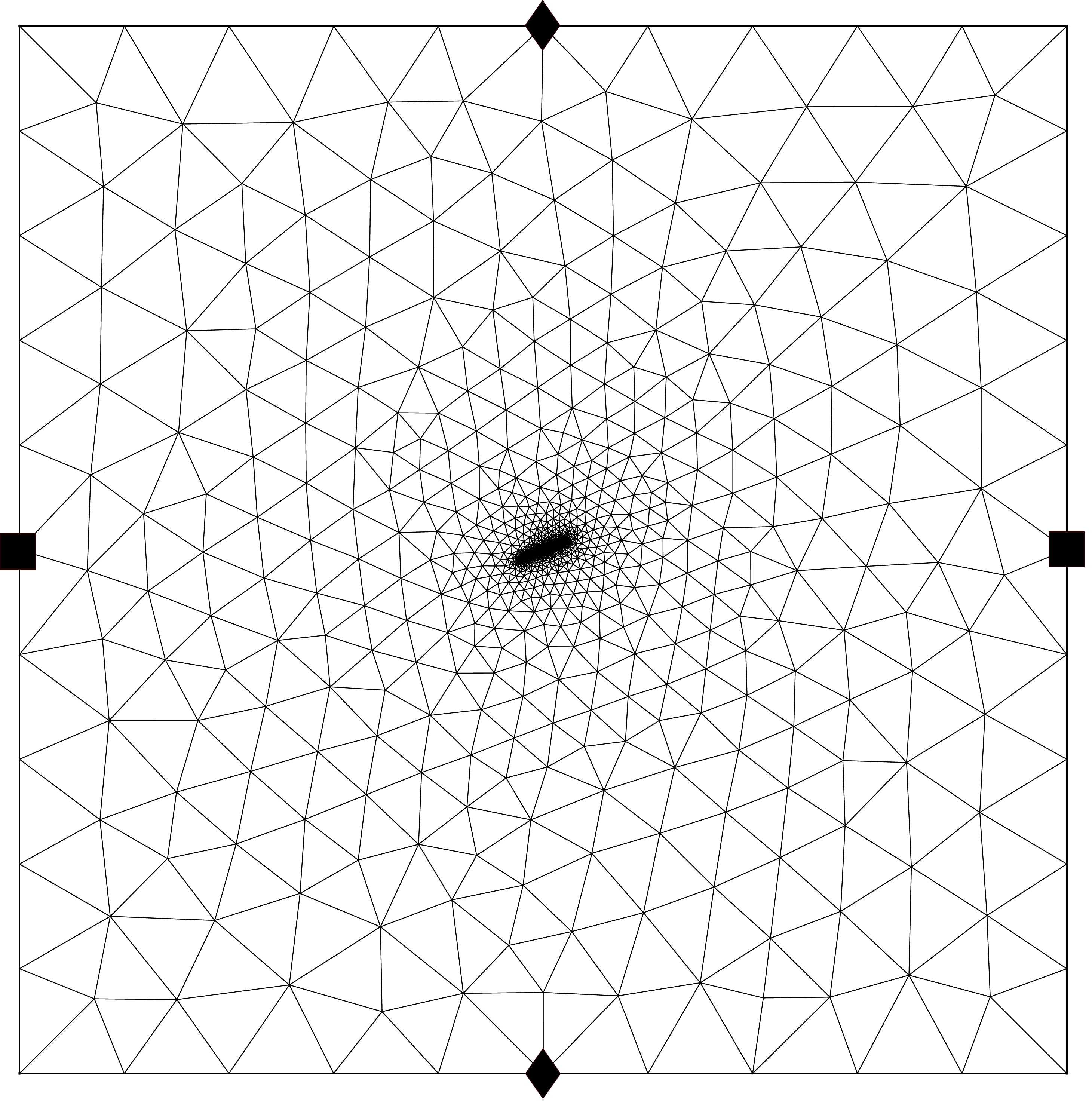}}\\
	\caption{Unbounded domain containing a single fracture under uniform compression (a) and mesh including nodes for boundary conditions ($\blacklozenge$: $u_x = 0$, $\blacksquare$: $u_y=0$), for the example of Section~\ref{tchelepi_meca}.}
	\label{test_compression}
\end{figure}
\begin{figure}[H]
	%\captionsetup[subfigure]{labelformat=empty}
	\centering
	\begin{tikzpicture}
		\node (img)  {\includegraphics[scale=0.55]{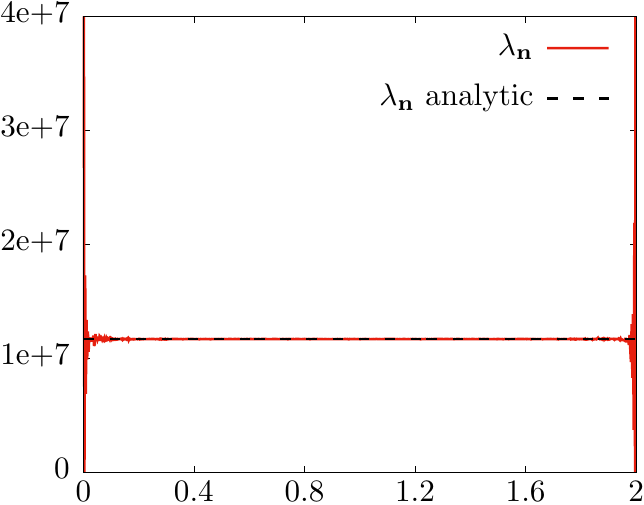}};
		\node[below=of img, node distance=0cm, rotate=0, anchor=center,yshift=0.8cm] {~~~~\footnotesize{ $\tau$ (m)}};
	\end{tikzpicture}
	\hspace{1cm}
	\begin{tikzpicture}
		\node (img)  {\includegraphics[scale=0.55]{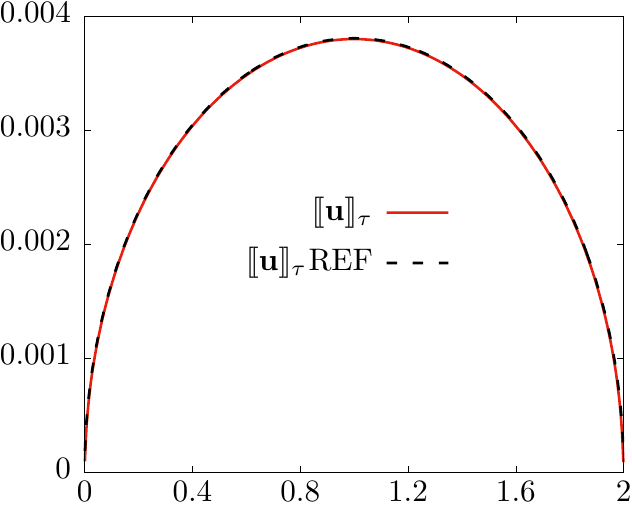}};
		\node[below=of img, node distance=0cm, rotate=0, anchor=center,yshift=0.8cm] {~~~~\footnotesize{ $\tau$ (m)}};
	\end{tikzpicture}
	
	\caption{Comparison between the numerical and analytical solutions on the finest mesh (with 800 fracture faces), in terms of $\lambda_{\n}$ (a) and $\jump{\bu }_{\tang}$ (b), example of Section~\ref{tchelepi_meca}.}
	\label{compression_comparison}
\end{figure}
\begin{figure}[H]
	%\captionsetup[subfigure]{labelformat=empty}
	\centering
	\begin{tikzpicture}
		\node (img)  {\includegraphics[scale=0.6]{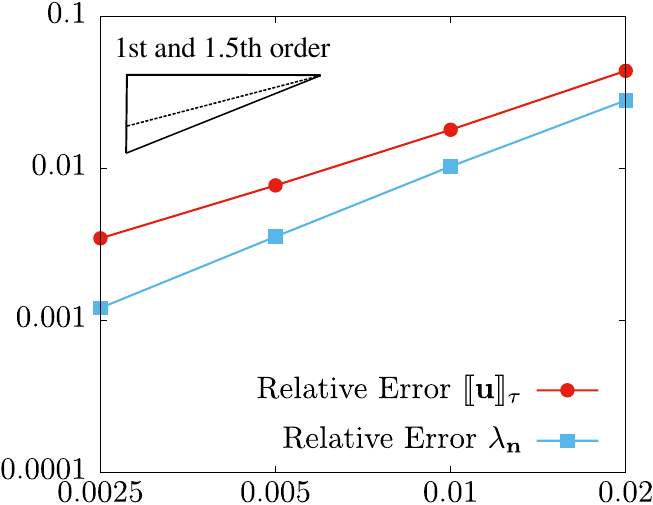}};
		\node[below=of img, node distance=0cm, rotate=0, anchor=center,yshift=0.8cm] {~~~~\footnotesize{ $h$ (m)}};
		\node[left=of img, node distance=0cm, rotate=90, anchor=center,yshift=-0.7cm] {\footnotesize{Relative $L^2$ Error}};
	\end{tikzpicture}
	\caption{Convergence of the relative $L^2$ error of $\jump{\bu}_\tang - \jump{\bu_\D}_{\D,\tang}$ and of $\lambda_\n - \lambda_{\D,\n}$ away from the tip, as a function of the size of the largest fracture face denoted by $h$. Test case of Section~\ref{tchelepi_meca}.}
	\label{compression_error}
\end{figure}

\subsection{3D manufactured solution for the Tresca friction model}\label{analytical_test}

We consider the 3D domain $\Omega = (-1,1)^3$ with the single non-immersed fracture $\Gamma =  \{0\} \times (-1,1)^2$. The Tresca coefficient $\trescacof$ is set to 1 and the Lam\'e coefficients are set to $\mu=\lambda = 1$. Note that this situation does not formally match the assumptions made in the analysis of the scheme: specifically, $\Omega\backslash\Gamma$ is not connected; however, this assumption was solely made to ensure that the Korn inequality is valid in the fractured domain, which is the case here since each connected component of $\Omega\backslash\Gamma$ sees $\partial\Omega$ (as a matter of fact, the connectedness assumption could be replaced by the assumption that each connected component of $\Omega\backslash\Gamma$ touches the boundary of $\Omega$ along some hypersurface).

The exact solution 
\begin{equation}
	\bu(x,y,z)=\label{s_term_signorini} 
	\left\{\hspace{.2cm}
	\begin{array}{lll}
		\left( {\begin{array}{c}
				h(x,y) P(z) - \trescacof y\\ 
				P(z) \\   
				x^2 P(z) \\ 
		\end{array} } \right)  & \mbox{ if } z \ge 0,\\[4ex]
		\left( {\begin{array}{c} 	
				h(x,y) Q(z) - \trescacof y\\ 
				2Q(z) \\   
				x^2 Q(z) \\ 
		\end{array} } \right)  & \mbox{ if } z < 0,~x\ < 0,  \\[1ex]
		\left( {\begin{array}{c} 	
				h(x,y) Q(z) - \trescacof y\\ 
				Q(z) \\   
				x^2 Q(z) \\ 
		\end{array} } \right)  & \mbox{ if } z < 0,~x\ \ge 0,
	\end{array}
	\right.
\end{equation}
with $h(x,y) = -\sin(x) \cos(y)$, $P(z) = z^2 $ and $Q(z) = z^2/4$, is designed to satisfy the Tresca frictional-contact conditions at the matrix fracture interface $\Gamma$.  The right hand side
$\mathbf{f} = -\div \bbsig(\bu)$ is deduced and the trace of $\bu$ is imposed as Dirichlet boundary condition on $\partial \Omega$.
Note that the fracture $\Gamma$ is in sticky-contact state for $z  > 0$ ($\jump{\bu}_\n = 0$, $\jump{\bu}_\tang = 0$) and slippy-contact for $z < 0$ ($\jump{\bu}_\n = 0$, $|\jump{\bu}_\tang| > 0$). The convergence of the mixed $\Po^1$-bubble VEM -- $\Po^0$ formulation is investigated on families of uniform Cartesian, tetrahedral, and hexahedral meshes. Starting from a uniform Cartesian mesh, an hexahedral mesh is generated by random perturbation of the nodes. This lead to non-planar faces (except on the fracture) which are dealt with either by cutting the faces into two triangles or by applying the modified gradient operator as described in remark \ref{non-planar}. These two choices illustrated in Figure \ref{rand_mesh} are denoted respectively by Hexa-cut and Hexa-bary in the following.

\begin{figure}[H]
	%\captionsetup[subfigure]{labelformat=empty}
	\centering
	\begin{tikzpicture}
		\node (img)  {\includegraphics[scale=0.3]{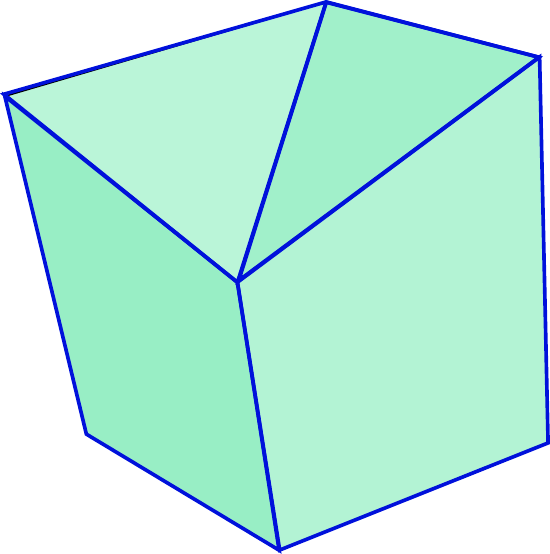}};
	\end{tikzpicture}
    \hspace{2cm}
	\begin{tikzpicture}
		\node (img)  {\includegraphics[scale=0.3]{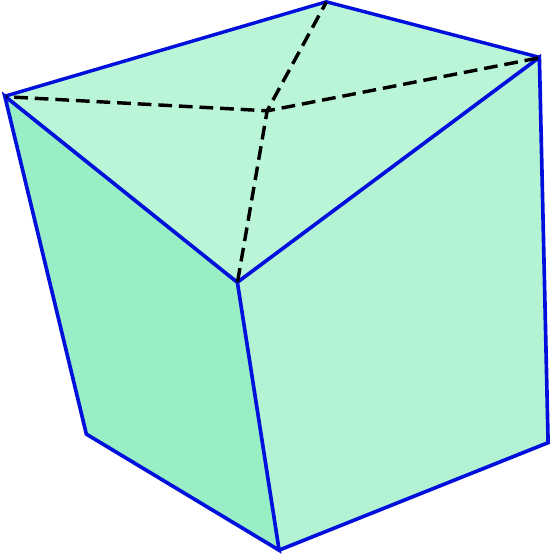}};
	\end{tikzpicture}
	\caption{Example of randomly perturbed Cartesian cell with non planar faces: hexahedral cell with a non planar face cut into two triangles (left, denoted by Hexa-cut) or four triangles using the isobarycenter of the face nodes (right, denoted by Hexa-bary). Note that in the Hexa-bary case, the displacement at the isobarycenter is eliminated by linear combination of the displacements at the face nodes as detailed in Remark \ref{non-planar}.}
	\label{rand_mesh}
\end{figure}

Figure \ref{Error_anal} exhibits the relative $L^2$ norms of the errors $\bu- \Pi_\D \bu_\D$, $\jump{\bu} - \jump{\bu_\D}_\D$, $\grad{\bu}- \nabla_\D \bu_\D$ and
$\lambda_\n - \lambda_{\D,\n}$ on the three families of refined meshes as a function of the cubic root of the number of cells.
It shows, as expected for such a smooth solution, a second-order convergence for $\bu$ and $\jump{\bu}$ with all families of meshes. A first-order convergence, coherent with Theorem \ref{error_estimate}, is obtained for $\grad{\bu}$ and $\lambda_\n$ with both the hexahedral and tetrahedral families of meshes, while a second-order convergence for $\grad{\bu}$ and a 1.5th-order convergence for $\lambda_\n$ is observed with the family of Cartesian meshes (these improved rates being probably due to the symmetry and uniformity of the mesh).
\begin{figure}[H]
	%\captionsetup[subfigure]{labelformat=empty}
	\centering
	\begin{tikzpicture}
		\node (img)  {\includegraphics[scale=0.58]{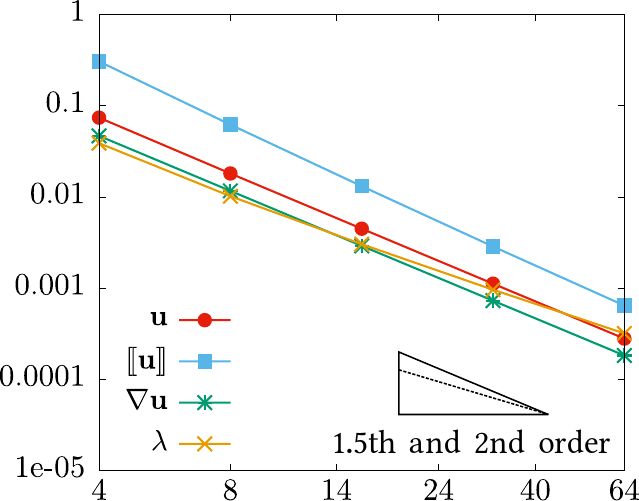}};
		\node[left=of img, node distance=0cm, rotate=90, anchor=center,yshift=-0.7cm] {\footnotesize{ $L^2$ Error }};
		\node[below=of img, node distance=0cm, yshift=1cm]  {~~~~~~~\footnotesize{ $N_{\text{cell}}^{\frac{1}{3}}$ }};
		\node[above=of img, node distance=0cm, rotate=0, anchor=center,yshift=-0.7cm]   {\footnotesize{ ~~~~(a)}};
	\end{tikzpicture}
	\begin{tikzpicture}
		\node (img)  {\includegraphics[scale=0.58]{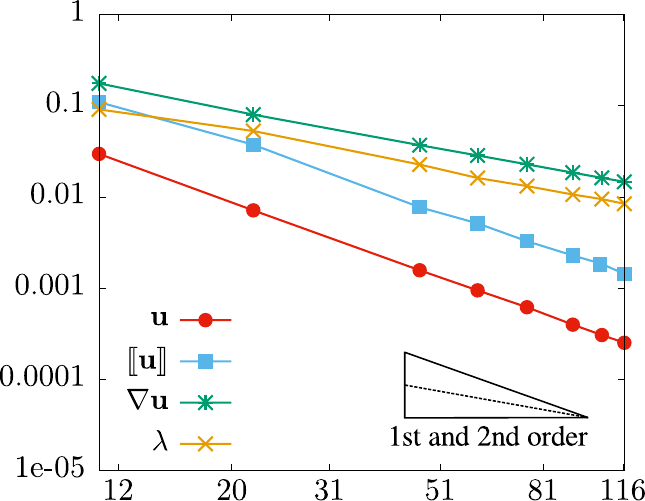}};
		\node[below=of img, node distance=0cm, yshift=1cm]  {~~~~~~~\footnotesize{ $N_{\text{cell}}^{\frac{1}{3}}$ }};
		\node[above=of img, node distance=0cm, rotate=0, anchor=center,yshift=-0.7cm]   {\footnotesize{ ~~(b)}};
	\end{tikzpicture}

    \vspace{0.2cm}
	\begin{tikzpicture}
		\node (img)  {\includegraphics[scale=0.58]{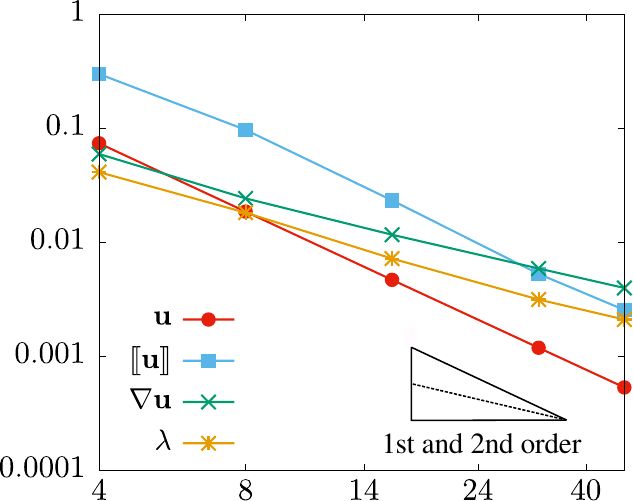}};
		 \node[left=of img, node distance=0cm, rotate=90, anchor=center,yshift=-0.7cm] {\footnotesize{ $L^2$ Error }};
		\node[below=of img, node distance=0cm, yshift=1cm]  {~~~~~~~\footnotesize{ $N_{\text{cell}}^{\frac{1}{3}}$ }};
		\node[above=of img, node distance=0cm, rotate=0, anchor=center,yshift=-0.7cm]   {\footnotesize{ ~~~~~(c)}};
	\end{tikzpicture}
	\begin{tikzpicture}
	\node (img)  {\includegraphics[scale=0.58]{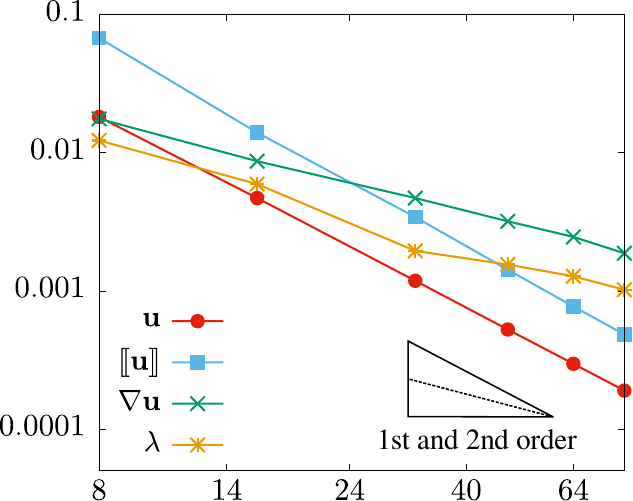}};
	\node[below=of img, node distance=0cm, yshift=1cm]  {~~~~~~~\footnotesize{ $N_{\text{cell}}^{\frac{1}{3}}$ }};
	\node[above=of img, node distance=0cm, rotate=0, anchor=center,yshift=-0.7cm]   {\footnotesize{ ~~~~~(d)}};
    \end{tikzpicture}

	\caption{Relative $L^2$ norms of the errors $\bu- \Pi_\D \bu_\D$, $\jump{\bu} - \jump{\bu_\D}_\D$, $\grad{\bu}- \nabla_\D \bu_\D$ and
		$\lambda_\n - \lambda_{\D,\n}$  as a function of the square root of the number of cells, using the families of Cartesian (a), tetrahedral (b), Hexa-cut (c) and Hexa-bary (d) meshes. Test case of Section \ref{analytical_test}.}
	\label{Error_anal}
\end{figure}

The discrete solution for the face-wise constant normal jump $\jump{\bu_\D}_{\D,\n}$ is essentially zero (machine precision of $10^{-16}$) for any mesh, as we are in a contact state on the fracture $\Gamma$. In contrast, the nodal normal jumps approach zero as the mesh is refined, as illustrated in Figure \ref{test_analytical_saut_n} for the family of Hexa-cut meshes. The nodal normal jumps in Figure \ref{test_analytical_saut_n} are plotted as a function of $z$ along the ``broken'' line corresponding to $x=y=0$ before perturbation of the mesh. Figure \ref{test_analytical_saut_t} plots, for the Hexa-cut family of meshes, the face-wise constant non-zero tangential component jump $\jump{\bu_\D}_{\D,y}$  on $\Gamma$ as well as the nodal tangential jumps as a function of $z$  as for $\jump{\bu_\D}_{\D,\n}$ in Figure \ref{test_analytical_saut_n}. We recall that, on the fracture, the exact tangential jump $\jump{\bu}_{y}$   depends only on $z$ and is equal to $\min(z/2,0)^2$.

\begin{figure}[H]
	%\captionsetup[subfigure]{labelformat=empty}
	\centering
	\begin{tikzpicture}
		\node (img)  {\includegraphics[scale=0.8]{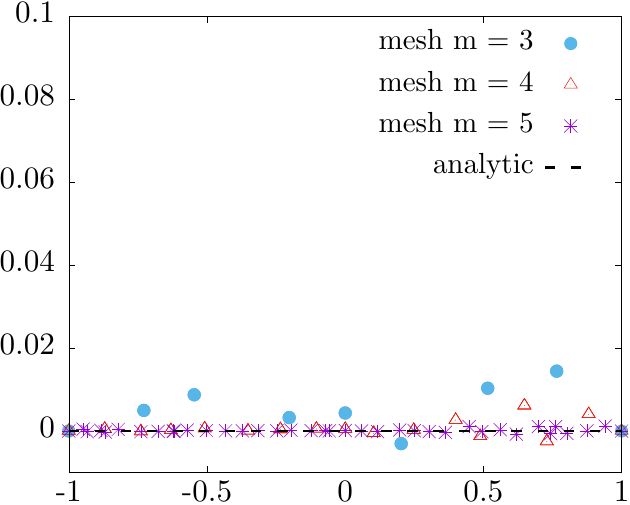}};
		\node[left=of img, node distance=0cm, rotate=90, anchor=center,yshift=-0.7cm] {\footnotesize{ $\jump{\bu }_{\mathbf{n}}$ (m)}};
		\node[below=of img, node distance=0cm, rotate=0, anchor=center,yshift=0.8cm] {~~~~\footnotesize{ $z$ (m)}};
		\node[below=of img, node distance=0cm, rotate=0, anchor=center,yshift=0.1cm] {};
	\end{tikzpicture}
	\caption{Nodal  normal jumps $\jump{\bu_\D}_{\D,\n}$ along the line $x=y=0$ as a function of $z$ for the discrete solutions on the Hexa-cut meshes with $2^{3m}$ cells, $m=3,4,5$ and for the continuous solution. Test case of Section \ref{analytical_test}.}
	
	\label{test_analytical_saut_n}
\end{figure}

\begin{figure}[H]
	%\captionsetup[subfigure]{labelformat=empty}
	\centering
	\hspace{-3cm}
	\begin{tikzpicture}
		\raisebox{0.2cm}{\node (img)  {\includegraphics[scale=0.25]{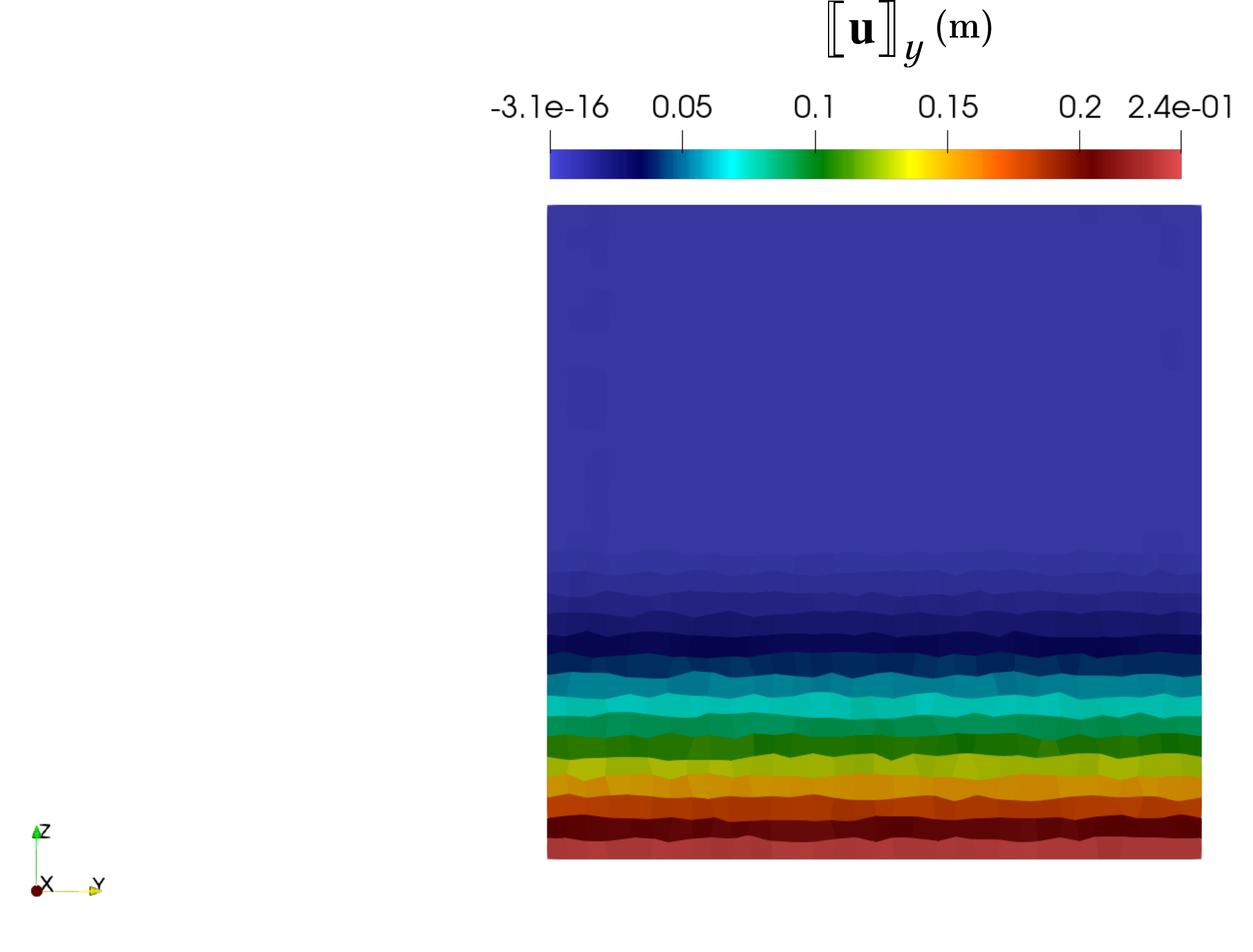}};
			\node[below=of img, node distance=0cm, rotate=0, anchor=center,yshift=1cm] {~\footnotesize{~~~~~~~~~~~~~~~~~~~~~~~~~~~~~~~~~~~~~~~~~~ (a)}};}
	\end{tikzpicture}
	\hspace{2cm}
	\begin{tikzpicture}
		\node (img)  {\includegraphics[scale=0.58]{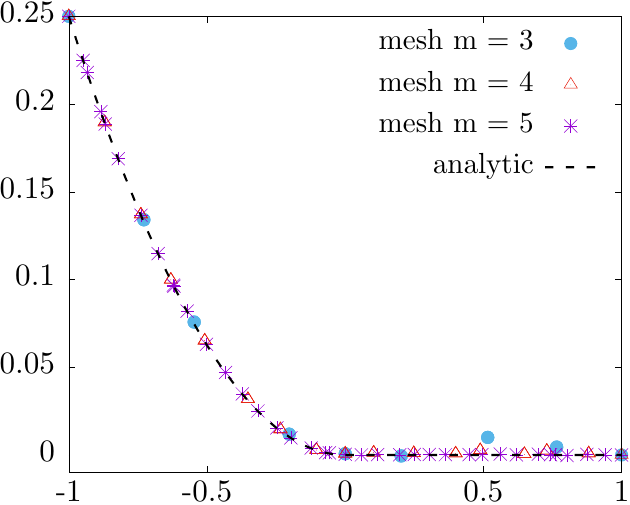}};
		\node[left=of img, node distance=0cm, rotate=90, anchor=center,yshift=-0.7cm] {\footnotesize{ $\jump{\bu }_{y}$ (m)}};
		\node[below=of img, node distance=0cm, rotate=0, anchor=center,yshift=0.8cm] {~~~~\footnotesize{ $z$ (m)}};
		\node[below=of img, node distance=0cm, rotate=0, anchor=center,yshift=0.1cm] {~~~~~~\footnotesize{ (b)}};
	\end{tikzpicture}
	\caption{(a) Face-wise constant non-zero tangential component jump $\jump{\bu_\D}_{\D,y}$ on $\Gamma$ obtained on the Hexa-cut mesh with $2^{3m}$ cells, $m=5$. (b) Nodal tangential jumps	 along the line $x=y=0$ as a function of $z$ both for the discrete solutions on the Hexa-cut meshes with $2^{3m}$ cells, $m=3,4,5$ and for the continuous solution depending only on $z$. Test case of Section \ref{analytical_test}.}
	
	\label{test_analytical_saut_t}
\end{figure}

\section{Conclusions}

We have presented in this work the convergence analysis of a fully discrete polytopal discretisation of a contact-mechanical model with Tresca friction at matrix-fracture interfaces. The analysis accounts for general elements and network of planar fractures including immersed and non-immersed fractures, with intersections, corners and tips.
The main new features are related to the polytopal bubble additional displacement degree of freedom, the proof of a discrete inf-sup condition using a specific $\JSdual$-norm to deal with networks of fractures and one-sided bubbles, and the proof of a discrete Korn inequality taking fracture networks into account. Numerical experiments based on two analytical solutions are presented and confirm the established first order error estimates on the $L^2(\Omega)$-norm of the displacement gradient  and on the $\JSdual$-norm of the Lagrange multiplier. 

\appendix

\section{Construction of $U_\Ksi$ and $\varpi_{\Ksi}$ for the averaged interpolator}\label{sec:existence.varpi}

For each $K\in\cells$ and $s\in\nodes_K$, we construct an explicit $U_\Ksi$ and $\varpi_\Ksi$ that satisfies \eqref{eq:def.varpi}. The construction shows that $U_\Ksi$ can be entirely contained in any single element in $\Ksi$. For simplicity of presentation we assume here that the space dimension is $d=3$, but the same construction can also be done in 2D.

Let $L\in \Ksi$. By mesh regularity there is a simplex $S\subset L$ that contains $s$ as one of its vertices, and that is shape-regular (with regularity factor bounded above by the global mesh regularity factor). We take $U_\Ksi=S$. We then build $\varpi_\Ksi$ on the reference simplex
$$
\widehat{S}=\mathrm{co}\{(0,0,0),(1,0,0),(0,1,0),(0,0,1)\}
$$
and linearly transport it onto $S=U_\Ksi$; the relation \eqref{eq:def.varpi.integral} are preserved by this linear transport, and the upper bound in \eqref{eq:def.varpi.bounds} is the same as the one on the reference simplex.

Without loss of generality we can therefore assume that $\mathbf{x}_s=(0,0,0)$. The simplex $\widehat{S}$ has center of mass $\mathbf{x}_{\widehat{S}}=(1/3,1/3,1/3)$, and the simplex $S_1=\frac12\widehat{S}$ has center of mass $\mathbf{x}_{S_1}=(1/6,1/6,1/6)$. We therefore have
\begin{equation}\label{eq:xs.xs1}
  \mathbf{x}_s=2\mathbf{x}_{S_1}-\mathbf{x}_{\widehat{S}}.
\end{equation}
Let us define $\varpi_\Ksi=16\mathbf{1}_{S_1}-\mathbf{1}_{\widehat{S}}$, where $\mathbf{1}_A$ is the characteristic function of $A$.
Then, since $|S_1|=|\widehat{S}|/8$,
$$
 \int_{\widehat{S}}\varpi_\Ksi = 16|S_1|-|\widehat{S}|=|\widehat{S}|,
$$
which establishes the first relation in \eqref{eq:def.varpi.integral}. To prove the second, by definition of the centers of mass we write
$$
\int_{\widehat{S}}\mathbf{x}\varpi_\Ksi=\int_{S_1} 16\mathbf{x} - \int_{\widehat{S}}\mathbf{x}
=16|S_1|\mathbf{x}_{S_1}-|\widehat{S}|\mathbf{x}_{\widehat{S}}
=|\widehat{S}|(2\mathbf{x}_{S_1}-\mathbf{x}_{\widehat{S}})=|\widehat{S}|\mathbf{x}_s,
$$
where the conclusion follows from \eqref{eq:xs.xs1}.

\section{Estimate on fracture norms}

We establish here two estimates involving the $\JSdual$-like norms, that are used in the error estimates. The first one is an approximation property of the $L^2$-projector $\ProjFG$ on piecewise constant functions.

\begin{lemma}[Approximation properties of $\ProjFG$ in $L^2$ and discrete $\JSdual$ norms]\label{lemma:approx.proj.Gamma}
We have 
\begin{equation}\label{eq:approx.Gamma.L2}
\NORM{L^2(\Gamma)}{\la-\ProjFG\la}\lesssim h_\D\SEMINORM{H^1(\faces_\Gamma)}{\la}\qquad \forall \la\in H^1(\faces_\Gamma)^d,
\end{equation}
and, for all $s\in [0,1]$,
\begin{equation}\label{eq:approx.Gamma.Hs}
\NORM{-\nicefrac12,\Gamma}{\la-\ProjFG\la}\lesssim h_\D^{\frac{1}{2}+s}\SEMINORM{H^s(\faces_\Gamma)}{\la}\qquad \forall \la\in H^{s}(\faces_\Gamma)^d.
\end{equation}
\end{lemma}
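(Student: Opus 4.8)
The plan is to prove the two estimates separately: the first is purely local, while the second exploits the face-by-face structure of the $\JSdual$-like norm and an orthogonality trick.

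For \eqref{eq:approx.Gamma.L2}, I would note that the restriction of $\ProjFG$ to a face $\sigma\in\faces_\Gamma$ coincides with the $L^2(\sigma)$-orthogonal projection $\pi^0_\sigma$ onto $\Poly{0}(\sigma)^d$. The standard polynomial approximation estimate \cite[Theorem 1.45]{hho-book} then gives $\NORM{L^2(\sigma)}{\la-\ProjFG\la}\lesssim h_\sigma\SEMINORM{H^1(\sigma)}{\la}$ for each $\sigma$; squaring, summing over $\sigma\in\faces_\Gamma$, using $h_\sigma\le h_\D$ and taking square roots yields \eqref{eq:approx.Gamma.L2}.

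For \eqref{eq:approx.Gamma.Hs}, I would fix $i\in I$ and estimate $\NORM{-\nicefrac12,\Gamma_i}{\la-\ProjFG\la}$, that is $\int_{\Gamma_i}(\la-\ProjFG\la)\cdot\bv_i$ over $\bv_i\in H^1(\Omega_i^+;\Gamma_i)^d$ of unit $H^1(\Omega_i^+)$-norm. On each face $\sigma\subset\Gamma_i$ one has $(\ProjFG\la)_{|\sigma}=\pi^0_\sigma\la$, hence $\int_\sigma(\la-\pi^0_\sigma\la)\cdot c=0$ for every constant vector $c$; choosing $c=\pi^0_\sigma\bv_i$ allows us to insert $\bv_i-\pi^0_\sigma\bv_i$ and to apply Cauchy--Schwarz face by face:
$$\int_{\Gamma_i}(\la-\ProjFG\la)\cdot\bv_i=\sum_{\sigma\subset\Gamma_i}\int_\sigma(\la-\pi^0_\sigma\la)\cdot(\bv_i-\pi^0_\sigma\bv_i)\le\sum_{\sigma\subset\Gamma_i}\NORM{L^2(\sigma)}{\la-\pi^0_\sigma\la}\,\NORM{L^2(\sigma)}{\bv_i-\pi^0_\sigma\bv_i}.$$
The first factor is controlled by the fractional approximation property $\NORM{L^2(\sigma)}{\la-\pi^0_\sigma\la}\lesssim h_\sigma^s\SEMINORM{H^s(\sigma)}{\la}$, valid for $s=0$ trivially, for $s=1$ by \cite[Theorem 1.45]{hho-book}, and for intermediate $s$ by real interpolation between these two endpoints. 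For the second factor, let $K_\sigma\subset\Omega_i^+$ be the cell on the positive side of $\sigma$; since $\pi^0_\sigma\bv_i$ is the best constant $L^2(\sigma)$-approximation of $\bv_i$, we bound $\NORM{L^2(\sigma)}{\bv_i-\pi^0_\sigma\bv_i}$ by $\NORM{L^2(\sigma)}{\bv_i-m_{K_\sigma}}$, with $m_{K_\sigma}$ the mean of $\bv_i$ over $K_\sigma$, and a scaled continuous trace inequality on $K_\sigma$ together with the Poincar\'e--Wirtinger inequality on $K_\sigma$ (both valid on polytopal cells under mesh regularity, see \cite{hho-book}) give $\NORM{L^2(\sigma)}{\bv_i-\pi^0_\sigma\bv_i}\lesssim h_{K_\sigma}^{1/2}\SEMINORM{H^1(K_\sigma)}{\bv_i}\lesssim h_\sigma^{1/2}\SEMINORM{H^1(K_\sigma)}{\bv_i}$.

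Combining, factoring out $h_\D^{s+1/2}$ and applying a discrete Cauchy--Schwarz over $\sigma\subset\Gamma_i$, I would obtain $\int_{\Gamma_i}(\la-\ProjFG\la)\cdot\bv_i\lesssim h_\D^{s+1/2}\bigl(\sum_{\sigma\subset\Gamma_i}\SEMINORM{H^s(\sigma)}{\la}^2\bigr)^{1/2}\bigl(\sum_{\sigma\subset\Gamma_i}\SEMINORM{H^1(K_\sigma)}{\bv_i}^2\bigr)^{1/2}$, where the first bracket is $\le\SEMINORM{H^s(\faces_\Gamma)}{\la}$ and the second is $\lesssim\SEMINORM{H^1(\Omega_i^+)}{\bv_i}$, since by mesh regularity each cell is the positive-side neighbour of at most a bounded number of faces of $\Gamma_i$. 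Dividing by $\NORM{H^1(\Omega_i^+)}{\bv_i}$ and taking the supremum gives $\NORM{-\nicefrac12,\Gamma_i}{\la-\ProjFG\la}\lesssim h_\D^{s+1/2}\SEMINORM{H^s(\faces_\Gamma)}{\la}$, and summing over the finite set $I$ (whose cardinality depends only on $\Gamma$) concludes \eqref{eq:approx.Gamma.Hs}. The step I expect to be the main obstacle is the fractional approximation estimate for $\pi^0_\sigma$ on polygonal, non-simplicial faces: justifying the scaling argument, or the interpolation between the $s=0$ and $s=1$ endpoints, uniformly in the mesh requires some care with the Gagliardo seminorm and a covering of faces by shape-regular simplices; once that is secured, the remainder is a routine assembly of trace, Poincar\'e and Cauchy--Schwarz inequalities.
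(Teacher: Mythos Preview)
Your proposal is correct and follows essentially the same route as the paper. The only cosmetic differences are that the paper applies a single Cauchy--Schwarz inequality on $\Gamma_i$ (rather than face-by-face followed by a discrete Cauchy--Schwarz) and bounds $\NORM{L^2(\Gamma_i)}{\bv_i-\ProjFG\bv_i}$ by introducing the cell-wise projector $\ProjM$ and citing its trace approximation property, which is precisely your mean-value plus trace/Poincar\'e--Wirtinger argument written globally; the fractional approximation of $\pi^0_\sigma$ is handled in the paper exactly as you suggest, by interpolation between $s=0$ and $s=1$.
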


\begin{proof}
Let $\la\in H^{s}(\faces_\Gamma)^d$.
The approximation properties of $L^2$-orthogonal projectors on polynomial spaces give, for all $\sigma\in\faces_\Gamma$ and denoting by $\pi^0_\sigma$ the $L^2$-orthogonal projector on $\Poly{0}(\sigma)^d$,
$$
\NORM{L^2(\sigma)}{\la-\pi^0_\sigma\la}\lesssim h_\D^s\SEMINORM{H^s(\sigma)}{\la}
$$
(this estimate is established in \cite[Theorem 1.45]{hho-book} for the scalar case and $s\in\{0,1\}$, and can be obtained for vector-valued functions and $s\in(0,1)$ working component by component and interpolating between $s=0$ and $s=1$). 
Squaring, summing over $\sigma\in\faces_\Gamma$ and taking the square root, we infer
\begin{equation}\label{eq:la.proj.la.2}
\NORM{L^2(\Gamma)}{\la-\ProjFG\la}\lesssim h_\D^s\SEMINORM{H^s(\faces_\Gamma)}{\la}.
\end{equation}
The case $s=1$ corresponds to \eqref{eq:approx.Gamma.L2}. 

We now turn to \eqref{eq:approx.Gamma.Hs}. Given the definition \eqref{eq:def.H12.norm}, we only need to bound the norm on $\Gamma_i$ for each $i\in I$. For all $\bv_i\in H^1(\Omega_i^+;\Gamma_i)\backslash\{0\}$, by orthogonality property of $\ProjFG$ and a Cauchy--Schwarz inequality we have
\begin{align}
\int_{\Gamma_i}(\la-\ProjFG\la)\cdot\bv_i={}&\int_{\Gamma_i}(\la-\ProjFG\la)\cdot(\bv_i-\ProjFG\bv_i)\nonumber\\
\le{}& \NORM{L^2(\Gamma_i)}{\la-\ProjFG\la}\NORM{L^2(\Gamma_i)}{\bv_i-\ProjFG\bv_i}\nonumber\\
\lesssim{}& h_\D^s\SEMINORM{H^s(\faces_\Gamma)}{\la}\NORM{L^2(\Gamma_i)}{\bv_i-\ProjFG\bv_i},
\label{eq:la.proj.la.1}
\end{align}
where we have used \eqref{eq:la.proj.la.2} in the second inequality.
Letting $\ProjM$ be the $L^2$-orthogonal projector on piecewise constant functions over $\cells$ and introducing $(\ProjM\bv_i)_{|\Gamma_i}$ we have, since $\ProjFG(\ProjM\bv_i)_{|\Gamma_i}=( \ProjM\bv_i)_{|\Gamma_i}$,
\begin{align*}
\NORM{L^2(\Gamma_i)}{\bv_i-\ProjFG\bv_i}\le{}& \NORM{L^2(\Gamma_i)}{\bv_i-(\ProjM\bv_i)_{|\Gamma_i}}
	+\NORM{L^2(\Gamma_i)}{\ProjFG(\bv_i-(\ProjM\bv_i)_{|\Gamma_i})}\\
	\le{}& 2\NORM{L^2(\Gamma_i)}{\bv_i-(\ProjM\bv_i)_{|\Gamma_i}},
\end{align*}
where the conclusion comes from the boundedness in $L^2(\Gamma_i)$-norm of $\ProjFG$. The approximation properties of $\ProjM$ (see \cite[Theorem 1.45]{hho-book}) then yield
\begin{equation}\label{eq:la.proj.la.3}
\NORM{L^2(\Gamma_i)}{\bv_i-\ProjFG\bv_i}\lesssim h_\D^{\frac12}\NORM{H^1(\Omega_i^+)}{\bv_i}.
\end{equation}
Plugging \eqref{eq:la.proj.la.3} into \eqref{eq:la.proj.la.1}, dividing by $\NORM{H^1(\Omega_i^+)}{\bv_i}$ and taking the supremum over $\bv_i$ concludes the proof. \end{proof}

The second property is a bound between $H^{-1/2}$-like norms on $\Gamma$.

\begin{lemma}[Estimate of the discrete $H^{-1/2}$-norm]\label{lemma.disc.cont.norm1/2}
It holds
$$ \NORM{-\nicefrac 12,\D}{\la_\D} \lsim \NORM{-\nicefrac 12,\Gamma}{\la_\D},\quad \forall \la_{\D} \in \MD. $$
\end{lemma}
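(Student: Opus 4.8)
The plan is to exploit the fact that both norms localise over the planar components $(\Gamma_i)_{i\in I}$ of the fracture network, reduce to a single component, and then prove the bound there by exhibiting one explicit test function made of mesh-localised face bubbles. Since the faces of $\faces_\Gamma$ partition according to which $\Gamma_i$ contains them, and $\la_\D$ is constant on each $\sigma\in\faces_\Gamma$, the definition \eqref{def:norm.m1/2D} gives
$$\NORM{-\nicefrac12,\D}{\la_\D}^2=\sum_{i\in I}b_i^2\,,\qquad b_i^2:=\sum_{\sigma\in\faces_{\Gamma_i}}h_\sigma\NORM{L^2(\sigma)}{\la_\D}^2=\sum_{\sigma\in\faces_{\Gamma_i}}h_\sigma|\sigma|\,|\la_\sigma|^2.$$
As $I$ is finite, $\NORM{-\nicefrac12,\D}{\la_\D}=\bigl(\sum_{i\in I}b_i^2\bigr)^{1/2}\le\sum_{i\in I}b_i$, so it suffices to prove $b_i\lsim\NORM{-\nicefrac12,\Gamma_i}{\la_\D}$ for every $i\in I$; summing then yields the claim in view of \eqref{eq:def.H12.norm}. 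I would therefore fix $i\in I$ (the bound being trivial if $\la_\D$ vanishes on $\Gamma_i$) and, recalling \eqref{eq:def.H12.norm}, reduce to producing one field $\bv_i\in H^1(\Omega_i^+;\Gamma_i)^d\setminus\{0\}$ with $\int_{\Gamma_i}\la_\D\cdot\bv_i\gtrsim b_i^2$ and $\NORM{H^1(\Omega_i^+)}{\bv_i}\lsim b_i$, since these two bounds immediately give $\NORM{-\nicefrac12,\Gamma_i}{\la_\D}\ge\int_{\Gamma_i}\la_\D\cdot\bv_i/\NORM{H^1(\Omega_i^+)}{\bv_i}\gtrsim b_i$.

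To build $\bv_i$, for each $\sigma\in\faces_{\Gamma_i}$ let $K_\sigma$ be the cell on the positive side of $\sigma$, that is $\sigma\in\faces_{\Gamma_i,K_\sigma}^+$; mesh regularity gives $h_{K_\sigma}\approx h_\sigma$, $|K_\sigma|\approx h_\sigma^d$, $|\sigma|\approx h_\sigma^{d-1}$, $K_\sigma\subset\overline{\Omega_i^+}$, and $\overline{K_\sigma}$ meets the hyperplane carrying $\Gamma_i$ only along $\sigma$. Using a shape-regular simplicial subdivision of $K_\sigma$ compatible with its faces, I would pick a scalar face bubble $\psi_\sigma\ge 0$ (extended by $0$ off $K_\sigma$) with $\psi_\sigma=0$ on $\partial K_\sigma\setminus\sigma$, $\int_\sigma\psi_\sigma\gtrsim|\sigma|$, $\NORM{L^2(K_\sigma)}{\psi_\sigma}\lsim|K_\sigma|^{1/2}$ and $\NORM{L^2(K_\sigma)}{\nabla\psi_\sigma}\lsim h_\sigma^{-1}|K_\sigma|^{1/2}$. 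Since $\psi_\sigma$ vanishes on every face of $K_\sigma$ except $\sigma$, its zero-extension lies in $H^1(\Omega_i^+)$, and since $\overline{K_\sigma}$ meets $\partial\Omega_i^+\setminus\Gamma_i$ only through faces contained in $\partial K_\sigma\setminus\sigma$ (the planar extension of $\Gamma_i$ beyond $\overline{\Gamma_i}$ never touches $\overline{K_\sigma}$), the vector field
$$\bv_i:=\sum_{\sigma\in\faces_{\Gamma_i}}h_\sigma\,\psi_\sigma\,\la_\sigma$$
belongs to $H^1(\Omega_i^+;\Gamma_i)^d$.

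It then remains to verify the two estimates. On $\Gamma_i$ we have $\psi_{\sigma'}|_\sigma=0$ for $\sigma'\ne\sigma$, hence $\bv_i|_\sigma=h_\sigma\,\psi_\sigma|_\sigma\,\la_\sigma$ and
$$\int_{\Gamma_i}\la_\D\cdot\bv_i=\sum_{\sigma\in\faces_{\Gamma_i}}h_\sigma|\la_\sigma|^2\int_\sigma\psi_\sigma\gtrsim\sum_{\sigma\in\faces_{\Gamma_i}}h_\sigma|\sigma|\,|\la_\sigma|^2=b_i^2,$$
which is positive (so $\bv_i\ne 0$). Since each cell equals $K_\sigma$ for at most $\mathrm{Card}(\faces_K)\lsim 1$ faces, the summands in $\bv_i$ have bounded overlap, so with $h_\sigma\lsim 1$,
$$\NORM{H^1(\Omega_i^+)}{\bv_i}^2\lsim\sum_{\sigma\in\faces_{\Gamma_i}}h_\sigma^2|\la_\sigma|^2\bigl(|K_\sigma|+h_\sigma^{-2}|K_\sigma|\bigr)\lsim\sum_{\sigma\in\faces_{\Gamma_i}}h_\sigma^{d}|\la_\sigma|^2\approx b_i^2.$$
Combining these two displays gives $b_i\lsim\NORM{-\nicefrac12,\Gamma_i}{\la_\D}$, and summing over $i\in I$ as described above concludes the proof.

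The main obstacle is the geometric bookkeeping guaranteeing $\bv_i\in H^1(\Omega_i^+;\Gamma_i)^d$: one must know that the positive-side cell $K_\sigma$ lies entirely in $\overline{\Omega_i^+}$ and meets the hyperplane of $\Gamma_i$ only along $\sigma$, so that the zero-extension creates no spurious jump inside $\Omega_i^+$ and $\bv_i$ vanishes on all of $\partial\Omega_i^+\setminus\Gamma_i$, including the ``artificial'' planar part of that boundary. This relies on the mesh regularity assumptions and on each cell lying on one side of the hyperplane of each of its fracture faces (true, e.g., for convex or suitably star-shaped cells). Once this is in place, constructing the face bubbles $\psi_\sigma$ with the stated scalings is a standard reference-element computation and the remaining bounds are routine scaling arguments.
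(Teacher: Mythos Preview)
Your proof is correct in spirit and follows a genuinely different route from the paper. The paper constructs, on each planar component $\Gamma_i$, a \emph{surface} function $\bw\in H^{1/2}(\partial\Omega_i^+)^d$ by summing face bubbles $b_\sigma$ supported in balls $\mathcal B_\sigma\subset\sigma$ at distance $\gtrsim h_\sigma$ from $\partial\sigma$; it then estimates the Sobolev--Slobodeckij seminorm of $\bw$ by splitting the double integral into same-face and cross-face contributions (the latter handled via the distance condition on $\mathcal B_\sigma$), and finally invokes the trace lifting to produce $\bv_i\in H^1(\Omega_i^+;\Gamma_i)^d$. You instead build $\bv_i$ directly as a volume function from cell-supported face bubbles $\psi_\sigma$, which lets you bypass both the fractional-norm double-integral computation and the lifting argument, replacing them by a straightforward finite-overlap $H^1$ estimate. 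This is more elementary. The price is the geometric caveat you flag at the end: the claim that $K_\sigma$ meets the hyperplane of $\Gamma_i$ only along $\sigma$ (equivalently $K_\sigma\subset\overline{\Omega_i^+}$) is not a consequence of the mesh regularity assumed in the paper, which allows non-convex cells. The paper's surface-bubble route sidesteps this entirely since everything lives on $\partial\Omega_i^+$. Your argument can be made fully rigorous under the stated assumptions by a small tweak: take $\psi_\sigma$ supported in a single simplex $T_\sigma$ of the matching simplicial submesh of $K_\sigma$ having a face $\tau_\sigma\subset\sigma$ with $|\tau_\sigma|\approx|\sigma|$; since $T_\sigma$ is convex it lies in $\overline{\Omega_i^+}$, the scaling bounds and the lower bound $\int_\sigma\psi_\sigma\gtrsim|\sigma|$ persist, and the rest of your proof goes through unchanged.
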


\begin{proof}
Let $\la\in\MD$. Given the definitions \eqref{eq:def.H12.norm} and \eqref{def:norm.m1/2D} of the two norms, we only need to prove that, for each $i\in I$,
there exists $\bv_i\in H^1(\Omega_i^+;\Gamma_i)^d$ such that
\begin{equation}\label{eq:equiv.norm.v}
\NORM{H^1(\Omega_i^+)}{\bv_i}\lesssim \NORM{-\nicefrac12,\D,i}{\la_\D}\quad\mbox{ and }\quad\int_{\Gamma_i}\la_\D\cdot\bv_i=\NORM{-\nicefrac12,\D,i}{\la_\D}^2,
\end{equation}
where 
$$
\NORM{-\nicefrac12,\D,i}{\la_\D}=\left(\sum_{\sigma\subset\Gamma_i}h_\sigma|\sigma| |\la_\sigma|^2\right)^{1/2}
$$
is the restriction to $\Gamma_i$ of $\NORM{-\nicefrac12,\D}{\la_\D}$. To achieve this, we first define the boundary values of $\bv_i$ and then lift these
boundary values to create $\bv_i$ itself.

\medskip

\emph{Step 1: design of bubble functions.}

For each $\sigma\subset \Gamma_i$ we take a ball $\mathcal B_\sigma\subset \sigma$ and a bubble function $b_\sigma\in W^{1,\infty}(\sigma)$ such that
\begin{equation}\label{eq:bubble.integral}
\begin{aligned}
&\mathcal B_\sigma\mbox{ has radius }\gtrsim h_\sigma\,,\quad\dist(\mathcal B_\sigma,\partial\sigma)\gtrsim h_\sigma\,,\\
&\int_\sigma b_\sigma=|\sigma|\,,\quad b_\sigma=0\mbox{ outside }\mathcal B_\sigma\,,\quad \NORM{L^\infty}{b_\sigma}\lesssim 1\,,\quad
\NORM{L^\infty}{\nabla b_\sigma}\lesssim h_\sigma^{-1}.
\end{aligned}
\end{equation}
The existence of $\mathcal B_\sigma$ is ensured by the mesh regularity assumption, and $b_\sigma$ can then be constructed on $\mathcal B_\sigma$ by scaling the function $\x\to\dist(\x,\partial\mathcal B_\sigma)$. We then have
$$
\SEMINORM{H^{1/2}(\sigma)}{b_\sigma}^2=\int_\sigma\int_\sigma\frac{|b_\sigma(\x)-b_\sigma(\y)|^2}{|\x-\y|^d}\d\x\d\y
\le \NORM{L^\infty}{\nabla b_\sigma}^2\int_\sigma\int_\sigma\frac{\d\x\d\y}{|\x-\y|^{d-2}}.
$$
Using polar coordinates around $\x$ (setting $\y=\x+\rho \boldsymbol{\xi}$ with $\rho>0$ and $\boldsymbol{\xi}$ of unit length) and recalling that $\sigma$ has dimension $d-1$ we easily get $\int_\sigma\frac{\d\y}{|\x-\y|^{d-2}}\lesssim h_\sigma$. Hence, invoking \eqref{eq:bubble.integral}, we infer
\begin{equation}\label{eq:bubble.H12}
\SEMINORM{H^{1/2}(\sigma)}{b_\sigma}^2\lesssim h_\sigma^{-1}|\sigma|.
\end{equation}

For any $\x\in\mathcal B_\sigma$, since $\x$ is at distance $\gtrsim h_\sigma$ of $\partial\sigma$ a use of polar coordinates around $\x$ yields
$\int_{\partial\Omega_i^+\backslash\sigma}\frac{\d\y}{|\x-\y|^d}\lesssim h_\sigma^{-1}$. Using \eqref{eq:bubble.integral}, we infer that
\begin{equation}\label{eq:bubble.H12.long}
\int_\sigma\int_{\partial\Omega_i^+\backslash\sigma}\frac{|b_\sigma(\x)|^2}{|\x-\y|^d}\lesssim h_\sigma^{-1}|\sigma|.
\end{equation}

\medskip

\emph{Step 2: construction of the boundary value.}

Set $\bw:\partial\Omega_i^+\to\R^d$ such that $\bw|_\sigma=h_\sigma \la_\sigma b_\sigma$ for all $\sigma\subset\Gamma_i$, and $\bw|_{\partial\Omega_i^+\backslash\Gamma_i}=0$. Then,
\begin{equation}\label{eq:w.bdry.integral}
\int_{\Gamma_i}\la_\D\cdot \bw=\sum_{\sigma\subset\Gamma_i}h_\sigma|\la_\sigma|^2\int_\sigma b_\sigma=
\sum_{\sigma\subset\Gamma_i}h_\sigma|\la_\sigma|^2|\sigma|=\NORM{-\nicefrac12,\D,i}{\la_\D}^2.
\end{equation}
By the support condition in \eqref{eq:bubble.integral}, $\bw$ is continuous (and piecewise $H^1$) on $\partial\Omega_i^+$, so it belongs to $H^{1/2}(\partial\Omega_i^+)^d$. Having in mind to create a lifting of $\bw$, we want to bound its $H^{1/2}$ norm. Specifically, we will prove that
\begin{equation}\label{eq:est.w.bdry}
\NORM{H^{1/2}(\partial\Omega_i^+)}{\bw}\lesssim \NORM{-\nicefrac12,\D,i}{\la_\D}.
\end{equation}
We only detail the bound of the seminorm, as bounding the $L^2$ norm of $\bw$ is straightforward. 
We have
\begin{align}
\SEMINORM{H^{1/2}(\partial\Omega_i^+)}{\bw}^2={}&\sum_{\sigma\subset \partial\Omega_i^+}\sum_{\sigma'\subset \partial\Omega_i^+}\int_\sigma\int_{\sigma'}\frac{|\bw(\x)-\bw(\y)|^2}{|\x-\y|^d}\d\x\d\y\nonumber\\
={}&\sum_{\sigma\subset \Gamma_i}\int_\sigma\int_\sigma\frac{|\bw(\x)-\bw(\y)|^2}{|\x-\y|^d}\d\x\d\y
+\sum_{\sigma\subset \partial\Omega_i^+}\sum_{\sigma'\subset \partial\Omega_i^+,\,\sigma'\not=\sigma}\int_\sigma\int_{\sigma'}\frac{|\bw(\x)-\bw(\y)|^2}{|\x-\y|^d}\d\x\d\y\nonumber\\
={}&\mathfrak T_1+\mathfrak T_2.
\label{eq:est.norm.w}
\end{align}
To bound $\mathfrak T_1$, we use $\bw|_\sigma=h_\sigma \la_\sigma b_\sigma$ and \eqref{eq:bubble.H12}:
\begin{equation}\label{eq:bound.T1}
\mathfrak T_1=\sum_{\sigma\subset \Gamma_i}\SEMINORM{H^{1/2}(\sigma)}{\bw|_\sigma}^2\lesssim 
\sum_{\sigma\subset \Gamma_i}h_\sigma|\sigma|\,|\la_\sigma|^2=\NORM{-\nicefrac12,\D,i}{\la_\D}^2.
\end{equation}
We now turn to $\mathfrak T_2$, for which we write
\begin{align}
\mathfrak T_2\le {}& 
2\sum_{\sigma\subset \partial\Omega_i^+}\sum_{\sigma'\subset \partial\Omega_i^+,\,\sigma'\not=\sigma}\int_\sigma\int_{\sigma'}\frac{|\bw(\x)|^2}{|\x-\y|^d}\d\x\d\y
+
2\sum_{\sigma\subset \partial\Omega_i^+}\sum_{\sigma'\subset \partial\Omega_i^+,\,\sigma'\not=\sigma}\int_\sigma\int_{\sigma'}\frac{|\bw(\y)|^2}{|\x-\y|^d}\d\x\d\y\nonumber\\
={}&4\sum_{\sigma\subset \partial\Omega_i^+}\int_\sigma\int_{\partial\Omega_i^+\backslash\sigma}\frac{|\bw(\x)|^2}{|\x-\y|^d}\d\x\d\y\nonumber\\
\lesssim{}&\sum_{\sigma\subset \partial\Omega_i^+}h_\sigma|\sigma|\,|\la_\sigma|^2=\NORM{-\nicefrac12,\D,i}{\la_\D}^2,
\label{eq:bound.T2}
\end{align}
where the second line follows gathering the two terms in the right-hand side of the first line by symmetric roles of $\sigma$ and $\sigma'$, and
the last line from $\bw|_\sigma=h_\sigma \la_\sigma b_\sigma$ and \eqref{eq:bubble.H12.long}. 

Plugging \eqref{eq:bound.T1} and \eqref{eq:bound.T2} into \eqref{eq:est.norm.w} concludes the proof of \eqref{eq:est.w.bdry}.

\medskip

\emph{Step 3: conclusion.}

Since $\bw \in H^{1/2}(\partial\Omega_i^+)^d$ we can find a lifting $\bv_i\in H^1(\Omega_i^+)^d$ of $\bw$ such that $\NORM{H^1(\Omega_i^+)}{\bv_i}\lesssim \NORM{H^{1/2}(\partial\Omega_i^+)}{\bw}$. Since $\bw$ vanishes outside $\Gamma_i$, we have $\bv_i\in H^1(\Omega_i^+;\Gamma_i)$. By \eqref{eq:est.w.bdry} and \eqref{eq:w.bdry.integral}, the relations \eqref{eq:equiv.norm.v} hold and the proof is complete. \end{proof}

\section*{Acknowledgement}
We acknowledge the partial support from the joint laboratory IFPEN-Inria Convergence HPC/AI/HPDA for the energetic transition in realizing this project. We would like to thank Isabelle Faille and Guillaume Ench\'ery from IFPEN for fruitful discussions and comments during the elaboration of this work. 

This research pas partially funded by the European Union (ERC Synergy, NEMESIS, project number 101115663). Views and
opinions expressed are however those of the authors only and do not necessarily reflect those of the
European Union or the European Research Council Executive Agency. Neither the European Union nor
the granting authority can be held responsible for them.

%\clearpage
%%%%%%%%%%%%%%%%%%
%
%%-----------------------------
%%      your bibliography
%%-----------------------------
%\bibliographystyle{abbrv}
%\clearpage
\bibliographystyle{plain}
\bibliography{PolytopalBubbleAnalysis.bib}
%\printbibliography
\end{document}